\newtheorem{theorem}{Theorem}[section]
\newtheorem{proposition}[theorem]{Proposition}
\newtheorem{lemma}[theorem]{Lemma}
\theoremstyle{definition}
\newtheorem{definition}[theorem]{Definition}
\newtheorem{ex}[theorem]{Example}
\newtheorem{remark}[theorem]{Remark}
\newtheorem{problem}[theorem]{Problem}
\title{A colored Khovanov bicomplex}
\author{Noboru Ito\thanks{Address : Waseda Institute for Advanced Study, 1-6-1, Nishi Waseda, Shinjuku-ku, Tokyo 169-8050, JAPAN.  MSC: 57M27, 57M25.  Keywords: Khovanov homology, colored Jones polynomial, bicomplex.  }}
\begin{document}
\date{May 7, 2014}
\maketitle

\begin{abstract}
In this note, we prove the existence of a tri-graded Khovanov-type bicomplex (Theorem \ref{bicomplex_theorem}).  The graded Euler characteristic of the total complex associated with this bicomplex is the colored Jones polynomial of a link.  The first grading of the bicomplex is a homological one derived from cabling of the link  (i.e., replacing a strand of the link with several parallel strands); the second grading is related to the homological grading of ordinary Khovanov homology; finally, the third grading is preserved by the differentials, and corresponds to the degree of the variable in the colored Jones polynomial.  In particular, we introduce a way to take a small cabling link diagram directly from a big cabling link diagram (Theorem \ref{small_cable_l}).  
\end{abstract}

\section{Introduction}\label{intro}
Throughout this paper we work in the smooth category.  A {\it{link}} is a closed one-dimensional submanifold of $\mathbb{R}^{3}$ and a {\it{knot}} is a one-component link.  The equivalence of links is given by an ambient isotopy.  A {\it{link diagram}} is a {\it{regular}} projection of the link to a plane, where each double point is specified by over-crossing and under-crossing branches.  The term {\it{regular projection}} here means a projection to a plane in which every singular point is a transversal double point.  

For every link diagram, we can naturally consider a {\it{framing}} that is a non-vanishing normal vector field on the link considered up to isotopy.  An isotopy class of framings contains those annihilated by the projection and those whose vectors are projected to nonzero vectors.  We choose the latter type and call them {\it{blackboard framings}} if their normal vectors are sufficiently short.  

Let $\textbf{m}$ $=$ $(m_{1}, m_{2}, \dots, m_{l})$ be a finite sequence of nonnegative integers.  The $(m_{1}, m_{2}, \dots, m_{l})$-cable of a link diagram $D$ of an oriented $l$-component link $L$ is the diagram $D^{(m_{1}, m_{2}, \dots, m_{l})}$, defined by replacing the $i$-th component of $D$ with $m_{i}$-oriented parallel strands, as shown in Figs. \ref{ori_cable} and \ref{cable_ex}.  This replacement procedure is performed by using the link diagram and its blackboard framing.  Every point is pushed in the direction of the normal vector $n$, where for every tangent vector $t$ that is tangential to $D$, the pair $(t, n)$ is positively oriented on the plane (this description is taken from \cite{viro, w}).  
\begin{figure}
\begin{center}
\begin{picture}(0,0)
\put(152,88){$n$}
\put(152,30){$2$}
\put(152,72){$\cdot$}
\put(152,57){$\cdot$}
\put(152,42){$\cdot$}
\put(152,11){$1$}
\end{picture}
\qquad\ 
\includegraphics[width=10cm, bb=0 0 400 152]{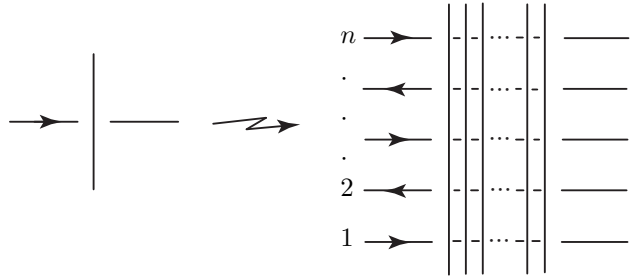}
\end{center}
\caption{Orientation of the cable of a diagram.  }\label{ori_cable}
\end{figure}
\begin{figure}
\begin{center}
\begin{picture}(0,0)
\put(15,45){I}
\put(63,90){I\!I}
\put(200,62){I}
\put(242,75){I\!I}
\put(196,35){$2$}
\put(241,37){$3$}
\put(262,37){$1$}
\put(249,83){$2$}
\put(179,45){$1$}
\qbezier(266,78)(261,81)(256,84)
\end{picture}
\quad
\includegraphics[width=10cm, bb=0 0 282.33 94]{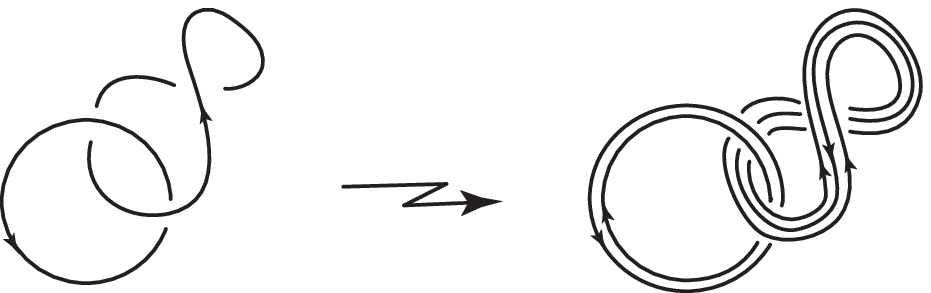}
\caption{A link diagram $D$ (left) and the $(2, 3)$-cable of $D$, denoted by $D^{(2, 3)}$ (right).}\label{cable_ex}
\end{center}
\end{figure}

For a $l$-component link $L$, the colored Jones polynomial $J_{\textbf{n}}(L)$ can be written as follows: 
\begin{equation}\label{colored_jones}
J_{\textbf{n}} (L) = \sum_{{\textbf{k}} = {\textbf{0}}}^{\lfloor \frac{{\textbf{n}}}{2} \rfloor}(-1)^{|{\textbf{k}}|}\begin{pmatrix} {\textbf{n}}-{\textbf{k}} \\ {\textbf{k}}\end{pmatrix}  J(D^{{\textbf{n}} - 2{\textbf{k}}})
\end{equation} where $\textbf{n}$ $=$ $(n_{1}, n_{2}, \dots, n_{l})$, $\textbf{k}$ $=$ $(k_{1}, k_{2}, \dots, k_{l})$ for arbitrary $n_{i}$, $k_{i} \in \mathbb{Z}_{\ge 0}$, $|{\textbf{k}}|$ $=$ $\sum_{i}k_{i}$, $\left( \begin{smallmatrix}{\textbf{n}} - {\textbf{k}} \\ {\textbf{k}}\end{smallmatrix} \right)$ $=$ $\prod_{i = 1}^{l} \left( \begin{smallmatrix} n_{i} - k_{i} \\ k_{i} \end{smallmatrix} \right)$, the sum $\sum_{\textbf{k} = \textbf{0}}^{\lfloor \textbf{n}/2 \rfloor}$ is the sum over all $0 \le k_i \le \lfloor {n_i}/2 \rfloor$ for all $i$, and $J(D)$ is the Jones polynomial of a link diagram $D$ that has $J(D^{\textbf{0}})$ $=$ $1$ (for more details of $J_{\textbf{n}}$, see \cite{kirbymelvin}, \cite{murakami}, and \cite{coloredkhovanov}).   

In \cite{khovanovjones}, Khovanov defined a bigraded chain complex whose graded Euler characteristic is the Jones polynomial and whose homology group, known as the Khovanov homology, is a link invariant.  In \cite{coloredkhovanov}, Khovanov made two proposals for an analogous homology theory for the colored Jones polynomial.  However, the first homology theory proposed in \cite{coloredkhovanov} was defined over only $\mathbb{Z}_2$, and the second one, for another normalization of the colored Jones polynomial, works only for knots.  Later, Beliakova and Wehrli developed the Khovanov theories of colored links over $\mathbb{Z}[1/2]$.  Furthermore, Mackaay and Turner \cite{mt} independently proposed another approach to constructing homology theories over $\mathbb{Z}_2$ for colored links, in which they calculated Bar-Natan's version of the Lee homology groups of knots and links.  

In \cite{bw, w}, Beliakova and Wehrli defined colored Khovanov brackets of colored links using {\textit{formal Khovanov brackets}}, which are universal objects introduced by Bar-Natan \cite{ba} that reconstruct the Khovanov and Lee homologies.  They pointed out that the colored Khovanov bracket is not a bicomplex and speculated as to whether it was indeed possible to construct a bicomplex.  

In these constructions, we require a direct definition of a coboundary operator between certain complexes of the link diagrams inducing the Khovanov-type homology whose graded Euler characteristic is the colored Jones polynomial of a link.  

With this background, the present note discusses the following problem.  
\begin{problem}\label{note_prob}
Is there a Khovanov-type bicomplex whose homological gradings are derived from cabling and from the homological grading of Khovanov homology, which can be viewed as a nontrivial categorification of the colored Jones polynomial?
\end{problem}
Here, the term {\textit{Khovanov-type bicomplex}} is used to mean that it possesses the properties of the Khovanov homology with respect to Euler characteristics, has a differential defined by the Frobenius calculus.  The following theorem provides a positive answer to Problem \ref{note_prob}.  
\begin{theorem}\label{bicomplex_theorem}
For each diagram $D$ of a link $L$, there exists a nontrivial tri-graded bicomplex $\{C^{k, i, j}_{\operatorname{\bf{n}}}(D), d', d''\}$ whose differentials preserve the grading $j$ such that
\begin{equation}\label{kh-eq1}
J_{\operatorname{\bf{n}}}(L) = \sum_{j} q^{j} \sum_{i, k} (-1)^{i+k}\, {\operatorname{rank}}\, H^{k}(H^{i}(C^{*, *, j}_{\operatorname{\bf{n}}}(D), d''), d').
\end{equation} 
\end{theorem}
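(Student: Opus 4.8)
The plan is to build the bicomplex out of the Khovanov complexes of the cables $D^{\mathbf{n}-2\mathbf{k}}$, letting the gradings $(i,j)$ carry the ordinary Khovanov bigrading and $d''$ be the Khovanov differential, while the first grading $k$ and the differential $d'$ categorify the cabling expansion \eqref{colored_jones}. First I would recall that for any cable diagram $D^{\mathbf{m}}$ the Bar-Natan/Khovanov construction produces a bigraded cochain complex $(C^{i,j}(D^{\mathbf{m}}),d_{Kh})$ in which $d_{Kh}$ raises $i$ by one, preserves $j$, and, up to the standard grading shifts, satisfies $\sum_j q^j \sum_i (-1)^i \operatorname{rank} C^{i,j}(D^{\mathbf{m}}) = J(D^{\mathbf{m}})$. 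Taking $d''=d_{Kh}$ on each summand then makes the third grading preserved and $H^i(-,d'')$ the Khovanov homology of the cable, as required.

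For the first direction I would exploit the combinatorial meaning of the coefficient $\binom{\mathbf{n}-\mathbf{k}}{\mathbf{k}}=\prod_i\binom{n_i-k_i}{k_i}$: for each component $\binom{n_i-k_i}{k_i}$ counts the ways to place $k_i$ pairwise non-overlapping turnbacks (cap--cup pairs) among the $n_i$ parallel strands, and capping off such strands turns the big cable $D^{\mathbf{n}}$ directly into the small cable $D^{\mathbf{n}-2\mathbf{k}}$. This is precisely the content I would draw from Theorem \ref{small_cable_l}, which lets me read the small cable off the big one. I therefore set
\[
C^{k,i,j}_{\mathbf{n}}(D)=\bigoplus_{|\mathbf{k}|=k}\ \bigoplus_{\text{turnback placements}} C^{i,j}(D^{\mathbf{n}-2\mathbf{k}}),
\]
the inner sum running over the $\binom{\mathbf{n}-\mathbf{k}}{\mathbf{k}}$ admissible placements, and let $d'$ be the signed sum of the maps that adjoin one further turnback, passing from a placement at level $k$ to one at level $k+1$; each such map is the chain map induced by the corresponding cap--cup cobordism in Bar-Natan's cobordism category.

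It then remains to check the bicomplex axioms and the Euler characteristic. For the axioms, $(d'')^2=0$ is Khovanov's, $d'd''+d''d'=0$ follows from functoriality of the Khovanov complex under cobordisms (the turnback cobordisms are morphisms commuting with $d_{Kh}$, so a Koszul sign convention anticommutes the two differentials), and $(d')^2=0$ reduces to a cube-type relation stating that adjoining two turnbacks in either order agrees up to the chosen signs. The graded Euler characteristic is then formal: since a bicomplex computes the same Euler characteristic from its chain groups as from its iterated homology,
\[
\sum_{i,k}(-1)^{i+k}\operatorname{rank} H^k(H^i(C^{*,*,j}_{\mathbf{n}}(D),d''),d')=\sum_k(-1)^k\sum_i(-1)^i\operatorname{rank} C^{k,i,j}_{\mathbf{n}}(D),
\]
and for each fixed placement the inner alternating sum over $i$ contributes the coefficient of $q^j$ in $J(D^{\mathbf{n}-2\mathbf{k}})$; summing over the $\binom{\mathbf{n}-\mathbf{k}}{\mathbf{k}}$ placements and over $k=|\mathbf{k}|$ with sign $(-1)^{|\mathbf{k}|}$, then multiplying by $q^j$ and summing over $j$, reproduces \eqref{colored_jones}, which is exactly \eqref{kh-eq1}.

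I expect the main obstacle to be the simultaneous verification of $(d')^2=0$ and the anticommutation relation under a single coherent choice of signs. The turnback cobordisms can share strands and interact with the Khovanov resolutions of the cable, so the composite of two cable-reduction maps must be shown to be independent of order up to sign while still commuting with $d''$; pinning down a global sign rule on the lattice of turnback placements that makes all three relations hold at once is the delicate point, and confirming that Theorem \ref{small_cable_l} yields these maps compatibly is where the real work lies.
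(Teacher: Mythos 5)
Your overall scaffolding agrees with the paper: the first grading indexed by pairings of cable strands (your ``turnback placements''), the chain groups $\bigoplus_{|\mathbf{k}|=k}\bigoplus_{\text{placements}}C^{i,j}(D^{\mathbf{n}-2\mathbf{k}})$, the choice $d''=d_{Kh}$, and the formal Euler-characteristic argument at the end are all the same as in Sec.~\ref{proof_section}. The genuine gap is your $d'$. You define it as a signed sum of chain maps induced by cap--cup (turnback) cobordisms in Bar-Natan's category and defer the existence of a coherent sign rule making $(d')^2=0$ and $d''d'+d'd''=0$ hold strictly, calling this ``the delicate point \dots where the real work lies.'' But that deferred point is not a routine verification: it is exactly the construction of Khovanov \cite{coloredkhovanov} and of Beliakova--Wehrli \cite{bw,w}, and the introduction of this paper recalls that it is known \emph{not} to produce a bicomplex --- Khovanov's version closes up only over $\mathbb{Z}_2$, Beliakova--Wehrli's only over $\mathbb{Z}[1/2]$, and they raised as an open question whether any bicomplex exists; that question is Problem~\ref{note_prob}, i.e.\ the very statement being proved. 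The obstruction is structural: in the cobordism category the two composites around a square of turnback placements are only isotopic surfaces, so the induced chain maps agree merely up to chain homotopy (and where they do agree on the nose, they agree rather than anti-agree), and no Koszul-type sign convention converts homotopy-(anti)commutativity into the strict relations a bicomplex requires. Your proposal therefore reduces the theorem to precisely the open problem it is meant to answer.

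The paper's proof takes a different route that sidesteps cobordism maps entirely. Its $d'$ is defined state-by-state: it vanishes on every enhanced state except the special states Type~$1_j$ and Type~$2_j$ (Sec.~\ref{sec_def_type1}, Def.~\ref{def_type1_and_2}), which are built from the \emph{smoothing} operation of Theorem~\ref{small_cable_l} --- a Kauffman-state-level move that exhibits the small cable inside the big one together with an even number of contracted circles (Prop.~\ref{contracted_even}), whose labels $x$, $1$ are prescribed exactly so that deleting them preserves the grading $j$. On those states $d'$ is $\pm\rho\, d_1' f$ (respectively $\rho\, d_1' \rho$) tensored with the edge-addition differential (\ref{z_pairing_dif}) on the pairing group $\Gamma_{\mathbf{n}}(D)$; the relation $(d')^2=0$ then factors through the elementary identity (\ref{dif3}) for pairings, and the hard part --- strict anticommutation $d''d'+d'd''=0$ --- is Proposition~\ref{bicomplex_commute}, established by a four-case analysis resting on Lemmas~\ref{lemma_type1_lem1}--\ref{lemma_type1_lem3}, Lemma~\ref{preserve_lemma2}, the sign conventions of Definitions~\ref{alternative_marker} and~\ref{plus_term}, and the appendix tables. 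None of this case analysis, nor any substitute for it, appears in your proposal.
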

The proof of Theorem \ref{bicomplex_theorem} is presented in Sec. \ref{proof_section} using results that we obtain in Sec. \ref{preliminary}--Sec. \ref{def_diff_cable}.  On the basis of Theorem \ref{bicomplex_theorem}, we define a colored Khovanov bicomplex and its homology.  
\begin{definition}\label{tri_homology}
From Theorem. \ref{bicomplex_theorem}, the coboundary operator $d'$ causes the map ${d'}^{*} : H^{i}(C_{\textbf{n}}^{k, *, j}(D))$ $\to$ $H^{i}(C_{\textbf{n}}^{k+1, *, j}(D))$ to imply $H^{k}(H^{i}(C_{\textbf{n}}^{*, *, j}(D)))$.  The complex $\{C_{\textbf{n}}^{k, i, j}(D), d', d''\}$ is called the {\textit{colored Khovanov bicomplex}}, and its cohomology $H^{k}(H^{i}(C_{\textbf{n}}^{*, *, j}(D)))$ is called the {\textit{colored Khovanov homology}}.  
\end{definition}

\section{Preliminaries}\label{preliminary}
\subsection{Jones polynomial and colored Jones polynomial}\label{JonesPolynomial}
In this paper, the Jones polynomial $J(L)$ of variable $q$ of an oriented link $L$ in $\mathbb{R}^{3}$ is defined by the skein relation
\begin{equation}
q^{-2} J(L_{+}) - q^{2} J(L_{-}) = ( q^{-1} - q ) J(L_{0})
\end{equation}
for three arbitrary links $L_{+}$, $L_{-}$, and $L_{0}$ that differ as shown in Fig. \ref{jones_skein}, and its value on the unknot is $q+q^{-1}$.  
\begin{figure}
\begin{center}
\begin{picture}(0,0)
\put(50,-10){$L_{+}$}
\put(150,-10){$L_{-}$}
\put(248,-10){$L_{0}$}
\end{picture}
\qquad \includegraphics[width=10cm, bb=0 0 289.15 90.62]{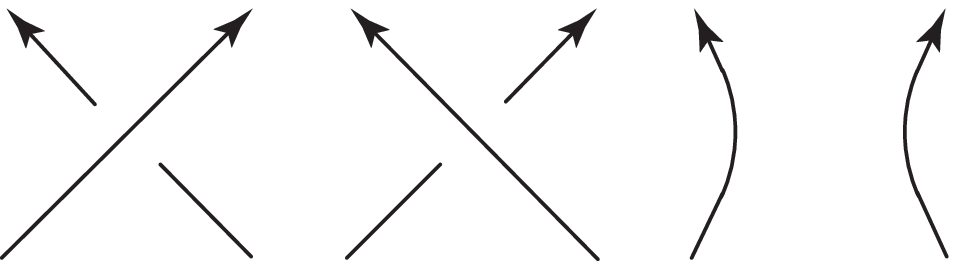}
\end{center}
\caption{Neighborhoods of the same part of an oriented link diagram.  The three exteriors of these neighborhoods are the same.}\label{jones_skein}
\end{figure}
Another definition of the Jones polynomial of an oriented link that has a diagram $D$ is given by
\begin{equation}
J(L) |_{q = - A^{-2}} = (-A)^{- 3 w(D)} \langle D \rangle
\end{equation}
where $w(D)$ is the number of crossings of $L_{+}$ minus that of $L_-$.  $\langle \cdot \rangle$ is the Kauffman bracket of the link diagram $D$ neglecting its orientation, and is defined by
\begin{equation}
\langle D_{\times} \rangle = A \langle D_{0} \rangle + A^{-1} \langle D_{\infty} \rangle
\end{equation}
for three arbitrary link diagrams $D_{\times}$, $D_{0}$, and $D_{\infty}$ that differ as shown in Fig. \ref{kauffman_b}, and its value on the Jordan curve in $\mathbb{R}^{2}$ is $- A^{-2} - A^{2}$.  
\begin{figure}
\begin{center}
\begin{picture}(0,0)
\put(50,-10){$D_{\times}$}
\put(150,-10){$D_{0}$}
\put(248,-10){$D_{\infty}$}
\end{picture}
\qquad \includegraphics[width=10cm, bb=0 0 289.15 90.62]{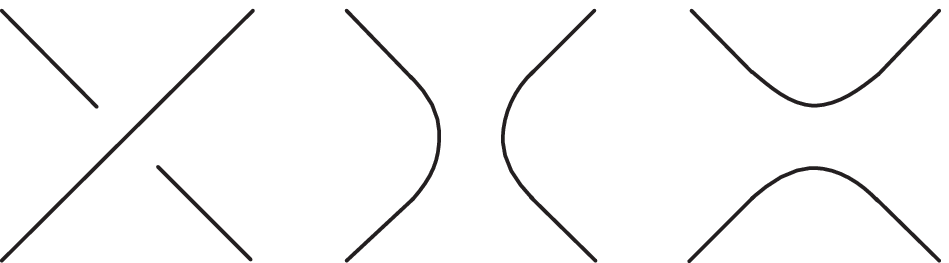}
\caption{Neighborhoods of the same part of an unoriented link diagram.  The three exteriors of these neighborhoods are the same.}\label{kauffman_b}
\end{center}
\end{figure}

In a similar manner to Khovanov \cite{khovanovjones}, we define the {\it{colored Jones polynomial}} as follows.  
\begin{definition}
The colored Jones polynomial is defined by formula (\ref{colored_jones}) using the Jones polynomial $J(L)$ defined in Sec. \ref{JonesPolynomial}.  
\end{definition}
For further details of formula (\ref{colored_jones}), see \cite{kirbymelvin, coloredkhovanov}.

\subsection{Khovanov homology of the Jones polynomial}
In this section, we recall the definition of the Khovanov homology of the Jones polynomial in the style of Viro \cite{viro}.  
\subsubsection{The $\mathbb{Z}_2$ Khovanov homology}\label{z_2homology}
Two cases are available for understanding the constructions of the desired bicomplex.  One entails using the coefficient $\mathbb{Z}$, and the other case, which is comparatively simpler, entails using the coefficient $\mathbb{Z}_2$.  Then, first, we recall the Khovanov homology with coefficients in $\mathbb{Z}_2$.  

Let us consider a link diagram and place a small edge (Fig. \ref{marker_b} (b) or (c)), called a {\it{marker}}, for every crossing (Fig \ref{marker_b} (a)) on the link diagram.  
\begin{figure}[htb]
\begin{center}
\begin{picture}(0,0)
\put(22,-10){(a)}
\put(84,-10){(b)}
\put(145,-10){(c)}
\put(203,-10){(d)}
\put(265,-10){(e)}
\end{picture}
\quad \includegraphics[width=10cm,bb=0 0 592 91.5]{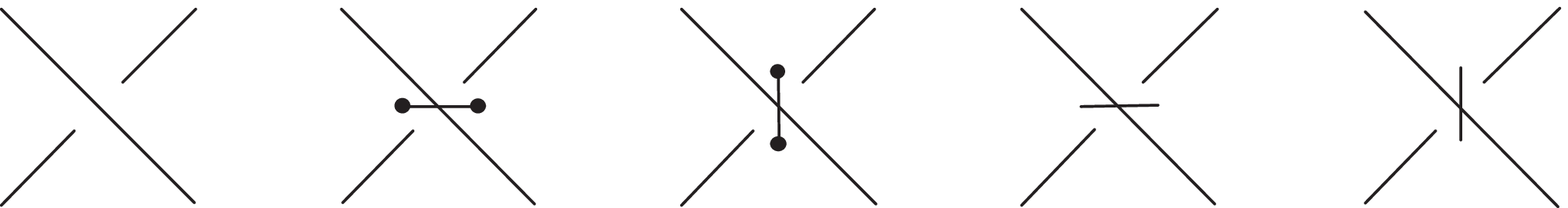}
\caption{A crossing (a) of a link diagram showing a positive marker (b), a negative marker (c), and a simple notation (d) ((e)) corresponding to (b) ((c)).  }\label{marker_b}
\end{center}
\end{figure}
Every marker and its sign are defined by the direction of smoothing for every crossing of the link diagram, as in Fig. \ref{marker_c}.  In the rest of this paper, we use the simple notation of Fig. \ref{marker_b} (d) ((e)) corresponding to the marker of Fig. \ref{marker_c} (b) ((c)).  
\begin{figure}
\begin{center}
\quad \includegraphics[width=10cm, bb=0 0 450 100]{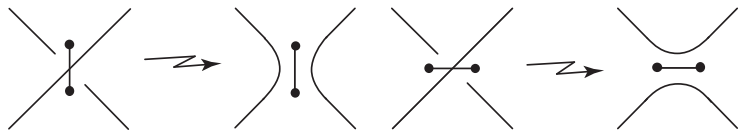}
\caption{Smoothing along markers.}\label{marker_c}
\end{center}
\end{figure}
The smoothed link diagram consists of Jordan curves, and is called the {\it{Kauffman state}} of the link diagram, or simply the {\it{state}} in this paper.  It is well known that Kauffman states determine the {\it{Kauffman bracket}} as follows.  For an arbitrary link diagram $D$, we denote the result of the smoothing by $s$ and the number of circles by $|s|$.  The number of positive markers minus the number of negative markers for an arbitrary $s$ is denoted by $\sigma(s)$.  The Kauffman bracket is written as
\begin{equation}
\langle D \rangle = \sum_{{\text{states}}~s} A^{\sigma (s)} (-A^{2} - A^{-2})^{|s|}.  
\end{equation}
As the next step of defining the Khovanov homology, we assign label $x$ or $1$ for every Jordan curve of the state.  We define the degrees of $x$ and $1$ by the map ``$\text{deg}$'' from $\{x, 1\}$ to $\{-1, 1\}$ such that ${\text{deg}}(x)$ $=$ $-1$ and ${\text{deg}}(1)$ $=$ $1$.  The state whose Jordan curves have the labels $x$ or $1$ is called an {\it{enhanced state}}, and is denoted by $S$.  Clearly, we can extend the definition of $\sigma$ for every enhanced state $S$ corresponding to $s$, therefore do so.  For $y$ $=$ $x$ or $y$ $=$ $1$, set $\tau(S)$ $=$ $\sum_{y~{\text{in}}~S}$ ${\text{deg}}(y)$.  For an oriented link diagram $D$ of a link $L$, the Jones polynomial $J(L)$ defined in Sec. \ref{JonesPolynomial} is obtained as 
\begin{equation}
J(L) = \sum_{{\text{enhanced states}}~S} (-1)^{i(S)} q^{j(S)}
\end{equation}
where $i(S)$ $=$ $(w(D) - \sigma(S))$ and $j(S)$ $=$ $w(D)$ $+$ $i(S)$ $+$ $\tau(S)$.  
Here, we would like to remark that
\begin{equation}
\begin{split}
J(L) &= (-A)^{-3w(D)} \langle D \rangle \\
&= \sum_{{\text{states}}~s} (-A)^{-3w(D)} A^{\sigma(s)} (-A^{2} - A^{-2})^{|s|}\\
&= \sum_{{\text{enhanced states}}~S} (-1)^{-3w(D) + |S|} A^{-3w(D) + \sigma(S) - 2\tau(S)}\\
&= \sum_{S} (-1)^{w(D) + \tau(S)} (A^{-2})^{w(D) + (w(D) - \sigma(S))/2 + \tau(S)}\\
&= \sum_{S} (-1)^{(w(D) - \sigma(S))/2} (- A^{-2})^{w(D) + (w(D) - \sigma(S))/2 + \tau(S)}\\
&= \sum_{S} (-1)^{(w(D) - \sigma(S))/2} q^{w(D) + (w(D) - \sigma(S))/2 + \tau(S)}\\
&= \sum_{S} (-1)^{i(S)} q^{j(S)}
\end{split}
\end{equation}
where we use the formula $|S|$ $\equiv$ $\tau(S)$ (mod $2$) for the number $|S|$ of circles in $S$.  
We consider the abelian group $C^{i, j}(D; \mathbb{Z}_2)$ with the coefficient $\mathbb{Z}_{2}$ generated by the enhanced states $S$ of a fixed link diagram $D$ satisfying $i(S)$ $=$ $i$ and $j(S)$ $=$ $j$.  For $D$ $=$ $\emptyset$, we consider that $C^{0, 0}(\emptyset; \mathbb{Z}_2)$ is generated by only one generator; and then, an enhanced state $S$ $=$ $\emptyset$ and $C^{0, 0}(\emptyset; \mathbb{Z}_2)$ is equal to $\mathbb{Z}_2$.  

Next, we define the coboundary operator $d_2$, usually called the differential in the case of the {\it{Khovanov homology}}.  We consider every enhanced state, denoted by $T$, obtained when the neighborhood of a single crossing with a positive marker is replaced with that of a negative marker in each of the cases listed in Fig. \ref{differential2}.  
\begin{figure}
\begin{center}
\begin{picture}(0,0)
\put(-10,135){(a)}
\put(-10,79){(b)}
\put(-10,18){(c)}
\put(170,135){(d)}
\put(170,79){(e)}
\put(170,18){(f)}
\put(37,160){$S$}
\put(51,135){$1$}
\put(22,134){$1$}
\put(51,79){$1$}
\put(22,79){$x$}
\put(51,18){$x$}
\put(22,17){$1$}
\put(134,160){$T$}
\put(148,134){$1$}
\put(147,78){$x$}
\put(147,17){$x$}
\put(197,170){$S$}
\put(277,170){$T$}
\put(197,148){$1$}
\put(275,148){$1$}
\put(275,120){$x$}
\put(197,89){$1$}
\put(275,90){$x$}
\put(275,63){$1$}
\put(275,33){$x$}
\put(275,5){$x$}
\put(197,32){$x$}
\end{picture}
\quad\ \includegraphics[width=10cm, bb=0 0 400 250]{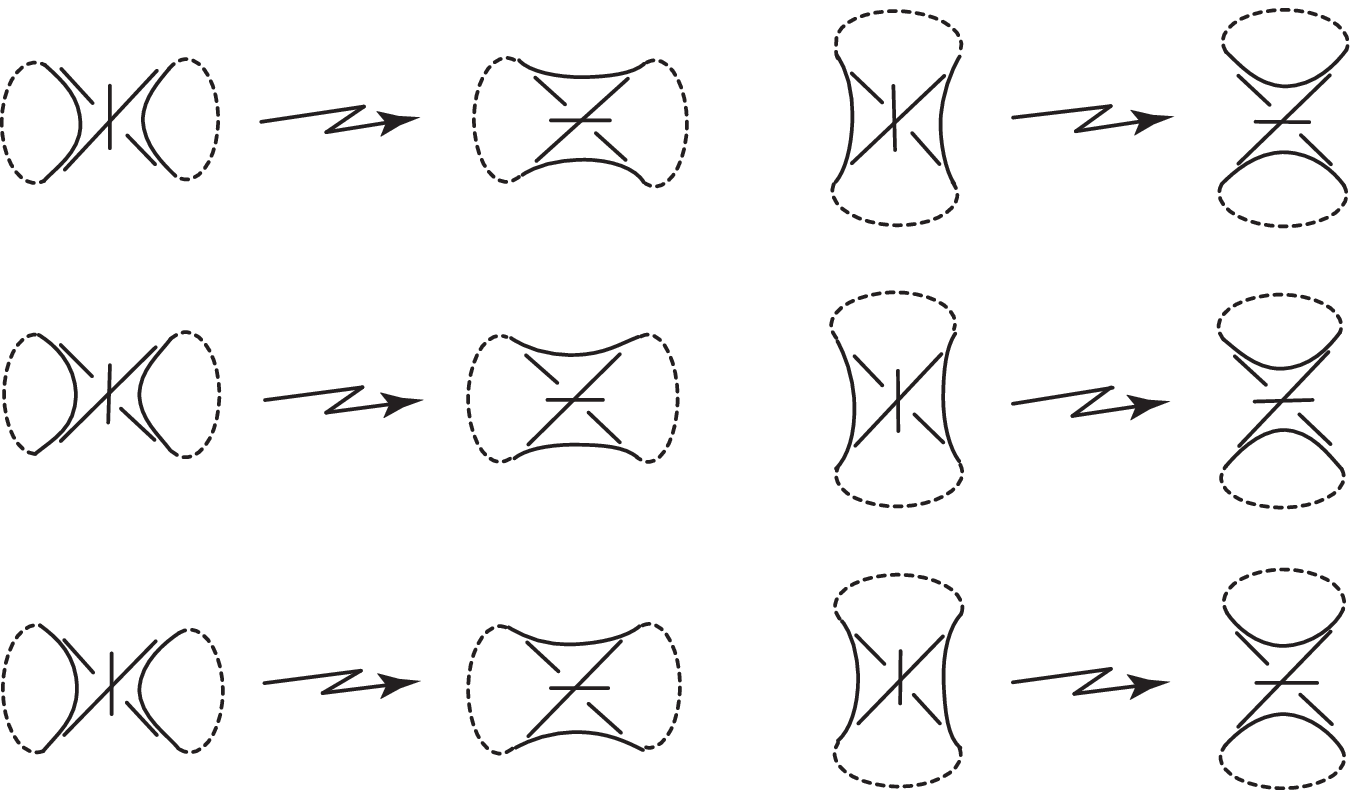}
\caption{Each figure to the left of an arbitrary arrow is $S$ and that to the right is $T$ for formula (\ref{differential_def}).  Each tuple of the enhanced states $S$ and $T$ defines the incidence number $(S : T)_2$ $=$ $1$ in formula (\ref{differential_def}).  The neighborhood of a positive marker of $S$ is replaced with that of a negative marker of $T$.  The dotted arcs are the common fragments of $S$ and $T$.  According to convention, we can represent the above figures using homomorphisms $m$ and $\Delta$ such as (a): $m(1 \otimes 1)$ $=$ $1$, (b): $m(x \otimes 1)$ $=$ $x$, (c): $m(1 \otimes x)$ $=$ $x$, (d) and (e): $\Delta(1)$ $=$ $1 \otimes x$ $+$ $x \otimes 1$, (f): $\Delta(x)$ $=$ $x \otimes x$.  In these formulae, each circle corresponds to $\mathbb{Z}_2 1$ $\oplus$ $\mathbb{Z}_2 x$ over $\mathbb{Z}_2$.}\label{differential2}
\end{center}
\end{figure}
For an arbitrary enhanced state $S$, the map $d_2$ is defined by
\begin{equation}\label{differential_def}
d_2(S) = \sum_{{\text{enhanced states}}~T}~(S:T)_2~T  
\end{equation}
where the incidence number $(S : T)_2$ is unity in each of the cases listed in Fig. \ref{differential2} and $(S : T)$ is $0$ otherwise.  The map $d_2$ is extended to the homomorphism from $C^{i, j}(D; \mathbb{Z}_2)$ to $C^{i+1, j}(D; \mathbb{Z}_2)$, since $j(S)$ $=$ $j(T)$ for the tuple of enhanced states listed in Fig. \ref{differential2} corresponding to $(S : T)_2$ $=$ $1$.  The homomorphism is denoted by the same symbol $d$ and becomes the coboundary operator of the Khovanov homology: that is, ${d_2}^{2}$ $=$ $0$ (for the proof of this formula, see \cite{viro}).  
\begin{theorem}[Khovanov]
Let $L$ be an arbitrary link with a diagram $D$.  For arbitrary $i$ and $j$, the cohomology group $H^{i}(C^{*, j}(D; \mathbb{Z}_2), d_2)$ is invariant under an ambient isotopy for $L$, so this cohomology group can be denoted by $H^{i, j}(L; \mathbb{Z}_2)$ and satisfies
\begin{equation}
J(L) = \sum_{j} q^{j} \sum_{i} (-1)^{i} {\operatorname{rank}}\,H^{i, j}(L; \mathbb{Z}_2).  
\end{equation}
\end{theorem}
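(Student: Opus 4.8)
The plan is to treat the two assertions of the theorem separately. The graded Euler characteristic formula follows almost at once from the state-sum identity already derived, whereas the topological invariance requires checking the three Reidemeister moves.

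For the Euler characteristic formula, fix $j$. Because $d_2$ raises $i$ by one and preserves $j$, the collection $(C^{*,j}(D;\mathbb{Z}_2),d_2)$ is a finite cochain complex of $\mathbb{Z}_2$-vector spaces. The standard fact that the Euler characteristic of a finite complex equals the alternating sum of the dimensions of its cohomology gives
\begin{equation*}
\sum_i (-1)^i \dim_{\mathbb{Z}_2} C^{i,j}(D;\mathbb{Z}_2) = \sum_i (-1)^i \operatorname{rank} H^{i,j}(L;\mathbb{Z}_2).
\end{equation*}
Since $C^{i,j}$ is generated by the enhanced states $S$ with $i(S)=i$ and $j(S)=j$, the left-hand side equals $\sum_{S:\,j(S)=j}(-1)^{i(S)}$. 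Multiplying by $q^j$, summing over $j$, and invoking the identity $J(L)=\sum_S (-1)^{i(S)}q^{j(S)}$ established above yields the claimed formula.

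For the invariance, by Reidemeister's theorem it suffices to show that whenever two diagrams $D$ and $D'$ of $L$ differ by a single Reidemeister move, the complexes $(C^{*,*}(D;\mathbb{Z}_2),d_2)$ and $(C^{*,*}(D';\mathbb{Z}_2),d_2)$ have isomorphic cohomology in each bidegree. I would produce, for each move, an explicit chain homotopy equivalence between the two complexes that respects both gradings $i$ and $j$. For the first move, the local complex attached to the curl splits off an acyclic summand, and once the shift of $w(D)$ built into $i(S)$ and $j(S)$ is taken into account, the surviving summand is isomorphic to the complex of the kink-free diagram. For the second move, the two new crossings contribute a small local complex containing a contractible acyclic piece; cancelling that piece by a Gaussian-elimination type homotopy identifies the cohomology with that of the diagram lacking the two crossings. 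For the third move, one reduces to the second by comparing the two sides through a common intermediate configuration.

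The main obstacle is the invariance step, and within it the explicit construction of the chain maps and chain homotopies together with the verification that they commute with $d_2$ while preserving the grading $j$; the third move is the most delicate, since it is typically handled only after the second move has been established. Over $\mathbb{Z}_2$ the bookkeeping is lighter than over $\mathbb{Z}$, because all incidence numbers lie in $\mathbb{Z}_2$ and no sign conventions intervene, so the essential content reduces to the combinatorial matching of enhanced states across each local tangle replacement. The detailed verification of these homotopy equivalences follows Viro's treatment in \cite{viro}.
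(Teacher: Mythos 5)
The paper does not actually prove this theorem---it is quoted as known background, attributed to Khovanov and presented in Viro's style, with only the state-sum identity $J(L)=\sum_{S}(-1)^{i(S)}q^{j(S)}$ and the relation $d_2^{\,2}=0$ (cited from \cite{viro}) established in the text. Your proposal is correct and is precisely the standard argument that this citation points to: the Euler-characteristic half combines that state-sum identity with the fact that the Euler characteristic of a finite complex of $\mathbb{Z}_2$-vector spaces equals the alternating sum of its cohomology ranks (using that $d_2$ preserves $j$), and the invariance half is Viro's Reidemeister-move analysis via bigraded chain homotopy equivalences, which you outline correctly---including the writhe-shift bookkeeping needed for the first move, since $i(S)$ and $j(S)$ involve $w(D)$---and appropriately defer to \cite{viro}.
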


From this definition, $H^{0, 0}(\emptyset; \mathbb{Z}_2)$ $=$ $\mathbb{Z}_2$ and $H^{0, 1}(\text{unknot}; \mathbb{Z}_2)$ $=$ $H^{0, -1}(\text{unknot};$ $\mathbb{Z}_2)$ $=$ $\mathbb{Z}_2$.  
\subsubsection{Extension to the coefficient $\mathbb{Z}$ case}\label{z_homology}
We consider the order of all the negative markers that belong to an enhanced state up to every permutation, and call this order {\it{the orientation}}.  The orientation is the {\it{opposite}} (resp. same) if two orders of negative markers differ by an odd (resp. even) permutation.  We consider the relation among enhanced states with the orders such that one enhanced state is another enhanced state multiplied by $-1$ ($1$) if the orders of these enhanced states have the opposite (same) orientations.  We call an enhanced state with this relation an {\it{oriented enhanced state}}.  We extend the relation to that of the abelian group generated by the oriented enhanced states $S$ satisfying $i(S)$ $=$ $i$ and $j(S)$ $=$ $j$ in a fixed link diagram $D$.  We denote the abelian group over the coefficient $\mathbb{Z}$ by $C^{i, j}(D)$.  

We now define the coboundary operator $d$ that is analogous to the $d_2$ given in Sec. \ref{z_2homology}.  For oriented enhanced states $S$ and $T$, we set the incidence number $(S : T)$ $=$ $1$ if $S$ and $T$ satisfy $(S : T)_2$ $=$ $1$ and are oriented by the orders of their negative markers such that the orders coincide on the common markers followed by the changing marker in the order of $T$. We define the map on oriented enhanced states as follows: 
\begin{equation}
d(S) = \sum_{{\text{oriented enhanced states}}~S} (S : T)~T.  
\end{equation}
We extend the map $d$ to that of $C^{i, j}(D)$ $\to$ $C^{i+1, j}(D)$, and denote this extended map by the same symbol $d$.  The extension of the coefficient from $\mathbb{Z}_2$ to $\mathbb{Z}$ is assured by the result of Viro \cite[Sec. 5.4]{viro} (originally given by Khovanov \cite{khovanovjones}), which outline below.  
\begin{theorem}[Viro]
The homomorphism $d$ satisfies $d^{2}$ $=$ $0$.  
\end{theorem}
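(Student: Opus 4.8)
The plan is to reduce the claim to the already-known $\mathbb{Z}_2$ statement $d_2^2 = 0$ recalled in Sec.~\ref{z_2homology} and then to account for the signs supplied by the orders of negative markers. First I would fix an oriented enhanced state $S$ and expand $d^2(S) = \sum_U c_U\, U$, where $U$ ranges over the oriented enhanced states obtained from $S$ by turning two distinct positive markers, say at crossings $a$ and $b$, into negative markers, and where $c_U = \sum_T (S:T)(T:U)$ with $T$ running over intermediate oriented enhanced states. Reducing the incidence numbers modulo $2$ gives $(S:T)(T:U) \equiv (S:T)_2 (T:U)_2 \pmod 2$, so the integer $c_U$ is even by the identity $d_2^2 = 0$. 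Thus it suffices to show that the surviving summands contributing to each $c_U$ cancel over $\mathbb{Z}$.

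Next I would analyze the intermediate states for a fixed target $U$ differing from $S$ at the crossings $a$ and $b$. There are exactly two ways to interpolate: turn the marker at $a$ first and then at $b$, giving a state $T_1$, or turn the marker at $b$ first and then at $a$, giving $T_2$ (these are distinct, since one has $a$ negative and $b$ positive while the other is reversed). The $\mathbb{Z}_2$ computation behind $d_2^2 = 0$ is precisely the commutativity of the two Frobenius-calculus compositions built from the maps $m$ and $\Delta$ of Fig.~\ref{differential2}, so it forces $(S:T_1)_2(T_1:U)_2 = (S:T_2)_2(T_2:U)_2$. When both vanish there is nothing to prove, so I would focus on the case where both equal $1$, reducing the problem to comparing the integral signs of the two surviving summands.

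The heart of the argument is this sign computation, which is the step I expect to be the main obstacle to state cleanly. Let the negative markers of $S$ be ordered as $(m_1, \dots, m_p)$. By the convention fixing $(S:T)$, namely that the orders coincide on common markers followed by the changing marker in the order of $T$, the composition through $T_1$ appends the marker at $a$ first and then at $b$, inducing the order $(m_1, \dots, m_p, a, b)$ on the negative markers of $U$; the composition through $T_2$ induces the order $(m_1, \dots, m_p, b, a)$. These two orders differ by the transposition of $a$ and $b$, an odd permutation, so relative to the single fixed orientation of $U$ the two contributions carry opposite signs. Hence $(S:T_1)(T_1:U) = -(S:T_2)(T_2:U)$ and $c_U = 0$ in $\mathbb{Z}$. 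The care required is to check that the ordering convention genuinely appends the changing marker last in each of the merge and split configurations of Fig.~\ref{differential2}, so that the transposition argument applies uniformly; once this is confirmed, $c_U = 0$ for every $U$ and therefore $d^2 = 0$.
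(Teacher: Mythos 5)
Note first that the paper does not actually prove this statement: it is quoted as Viro's theorem, with the proof deferred to \cite[Sec.~5.4]{viro}. So your proposal has to be judged on its own merits rather than against an in-paper argument. Your overall strategy (group the contributions to $d^2(S)$ by which of the two crossings is switched first, observe that the ordering convention appends the new negative marker last, so the two orders on $U$ differ by a transposition and hence give opposite orientations) is indeed the standard skeleton of Viro's proof, and that sign argument is correct.

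However, there is a genuine gap in the middle step. You assert that for fixed $S$ and $U$ there are \emph{exactly two} intermediate states $T_1$, $T_2$ (one per order), and that $d_2^2=0$ forces $(S:T_1)_2(T_1:U)_2=(S:T_2)_2(T_2:U)_2$. Neither claim holds as stated. Intermediates along a single order need not be unique: when a circle labeled $1$ is split, $\Delta(1)=1\otimes x+x\otimes 1$ produces two intermediate enhanced states on the same path. Concretely, if $S$ is a circle labeled $1$ carrying two crossings whose chords interleave, switching either crossing splits the circle and switching the second re-merges it, so each order contributes two intermediates (this is the face where both compositions equal $m\Delta$, and $m\Delta(1)=2x$, so the coefficient $2$ genuinely occurs). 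Writing $N_1$ and $N_2$ for the number of contributing intermediates along the two orders, your transposition argument shows the coefficient of $U$ in $d^2(S)$ is $N_1-N_2$ relative to a fixed orientation; what is needed is $N_1=N_2$ in $\mathbb{Z}$. But $d_2^2=0$ only gives $N_1+N_2\equiv 0 \pmod 2$, i.e.\ $N_1\equiv N_2 \pmod 2$, which does not exclude, say, $N_1=2$ and $N_2=0$. Ruling that out is precisely the \emph{integral} commutativity of each square face of Khovanov's cube --- associativity and coassociativity of $m$ and $\Delta$, the identity $\Delta m=(m\otimes 1)(1\otimes\Delta)$, and the split-then-remerge face above --- which must be verified case by case from the Frobenius calculus of Fig.~\ref{differential2} and cannot be recovered from the $\mathbb{Z}_2$ statement alone. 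Once that face-by-face check is supplied, your transposition argument correctly finishes the proof; without it, the proof is incomplete.
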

Then, we have one of Khovanov's results \cite{khovanovjones}.  
\begin{theorem}[Khovanov]\label{khovanov_homology}
Let $L$ be an arbitrary link diagram.  For arbitrary $i$ and $j$, the cohomology group $H^{i}(C^{*, j}(D), d)$ is invariant under an ambient isotopy for $L$, so this cohomology group can be denoted by $H^{i, j}(L)$ and satisfies
\begin{equation}
J(L) = \sum_{j} q^{j} \sum_{i} (-1)^{i} {\operatorname{rank}}\,H^{i, j}(L).  
\end{equation}
\end{theorem}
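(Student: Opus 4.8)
The plan is to separate the statement into two parts: the Euler characteristic identity, which follows almost formally from the computations already recorded above, and the ambient-isotopy invariance, which carries the substantive content. For the Euler characteristic, I would first note that $d$ raises $i$ by one and preserves $j$, so that for each fixed $j$ the groups $\{C^{i,j}(D)\}_i$ form a bounded cochain complex $(C^{*,j}(D),d)$ of finitely generated free abelian groups, with $\operatorname{rank} C^{i,j}(D)$ equal to the number of oriented enhanced states $S$ satisfying $i(S)=i$ and $j(S)=j$. The standard fact that the alternating sum of the ranks of such a complex equals the alternating sum of the ranks of its homology then gives $\sum_i (-1)^i \operatorname{rank} H^{i,j}(L) = \sum_i (-1)^i \operatorname{rank} C^{i,j}(D)$ for each $j$; multiplying by $q^j$ and summing over $j$ reproduces $\sum_S (-1)^{i(S)} q^{j(S)}$, which the displayed computation above identifies with $J(L)$. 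Beyond Viro's identity $d^2=0$, no new input is required for this half.

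For invariance I would invoke Reidemeister's theorem to reduce the claim to showing that the isomorphism type of each $H^{i,j}$ is unchanged under the three Reidemeister moves (planar isotopy being immediate). Since such a move alters $D$ only inside a small disk, the oriented enhanced states of the two diagrams differ only in their behaviour there, and I would build, move by move, an explicit chain map between the two complexes and exhibit an inverse up to chain homotopy, that is, a chain homotopy equivalence. For the second and third moves the two complexes are homotopy equivalent on the nose; the cleanest route is to cancel an acyclic direct summand of the larger complex by a Gaussian-elimination argument and identify the remainder with the smaller complex. For the first move the writhe jumps by $\pm 1$ and a new circle appears in one smoothing, and here the shifts built into the definitions of $i(S)$ and $j(S)$, both of which involve the writhe $w(D)$, are exactly what absorb the discrepancy, so that after normalization the homology is again preserved.

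The main obstacle, and the point where the $\mathbb{Z}$ case genuinely differs from the $\mathbb{Z}_2$ case recalled above, is sign bookkeeping. Over $\mathbb{Z}$ each enhanced state carries an orientation determined by the ordering of its negative markers, and both the relation $d^2=0$ and the homotopies used for invariance are valid only if these signs are assigned coherently. I would therefore fix, once and for all, Viro's ordering convention (the orders agreeing on the common markers followed by the changing marker, as in the definition of $(S:T)$ above) and verify that the chain maps and homotopies constructed for each Reidemeister move commute with $d$ up to the prescribed signs. Granting this consistent sign assignment, which is precisely the content of Viro's Sec.~5.4, the remainder is a routine face-by-face check local to the disk in which the move takes place.
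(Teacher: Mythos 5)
The paper does not prove this theorem at all: it is recalled as a known result, with the $d^2=0$ statement attributed to Viro \cite[Sec.~5.4]{viro} and the invariance and Euler-characteristic claims cited to Khovanov \cite{khovanovjones}. So there is no in-paper argument to compare against; the relevant comparison is with the cited literature, and your outline is exactly the standard route taken there. Your first half is complete as stated: $d$ preserves $j$ and raises $i$, each $C^{*,j}(D)$ is a bounded complex of finitely generated free abelian groups, the alternating sum of ranks of a complex equals that of its cohomology, and the paper's own displayed computation identifies $\sum_S (-1)^{i(S)}q^{j(S)}$ with $J(L)$. Your second half correctly identifies the substantive content (Reidemeister's theorem, then a chain homotopy equivalence per move, with the writhe-dependent shifts in $i(S)$ and $j(S)$ absorbing the R1 discrepancy, and with sign coherence over $\mathbb{Z}$ as the delicate point), but it remains a plan: the explicit chain maps and homotopies for the three moves are not constructed, and that construction is precisely the nontrivial work carried out in Khovanov's and Viro's papers. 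One small imprecision: Viro's Sec.~5.4 is what guarantees $d^2=0$ over $\mathbb{Z}$, not the sign-compatibility of the Reidemeister homotopies, which must be checked separately for each move. It is worth noting that the present paper's later machinery is consistent with your plan — the homomorphism $\rho$ it borrows from \cite[Formula (2.2)]{ito3} is exactly the retraction used in a Gaussian-elimination-style proof of R2 invariance — so your proposed strategy is the one implicitly assumed throughout the paper.
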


\section{Technique for taking up small cabling diagrams from a big cabling diagram}
In this section, we introduce a technique for directly taking up a small cabling diagram from a big one.  Recall the definition of a $\textbf{k}$-cable provided in Sec. \ref{intro}; we denote ``$(k)$-cable'' in the case of a knot diagram as simply a ``$k$-cable''.  
\begin{theorem}\label{small_cable_k}
Let $D$ be a knot diagram and $D^{k}$ be the $k$-cable of $D$.  The knot diagram $D^{k}$ can be taken from $D^{k+2}$ by smoothing the crossings of two neighboring strands of $D^{k+2}$.  
\end{theorem}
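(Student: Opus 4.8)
The plan is to reduce the statement to a local analysis at each crossing of $D$ and then to glue the local pictures together by an ambient isotopy. Recall from Section~\ref{intro} that $D^{k+2}$ is formed by pushing $k+2$ parallel copies of the knot off along the blackboard framing (Fig.~\ref{cable_ex}), so that along every arc of $D$ the cable consists of $k+2$ ordered parallel strands, while over each crossing of $D$ these strands weave through a $(k+2)\times(k+2)$ block of crossings, all of the same sign. First I would single out two neighboring strands of the cable — say the two outermost ones — and record exactly which crossings of $D^{k+2}$ involve both of them: at every crossing site of $D$ these are the crossings in which an over-copy of one chosen strand meets an under-copy of the other.

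Next I would smooth precisely those crossings, choosing at each one the resolution that \emph{connects} the two neighboring strands to each other (the turnback resolution), rather than the one that keeps them running through the block. The purpose of this second step is to show that, performed at all four relevant spots of a single crossing block, these smoothings cap off the two outer strands so that they no longer pass through the block at all: the local $(k+2)\times(k+2)$ block collapses to the $k\times k$ block of $D^{k}$, and the two outer strands are replaced by short arcs that turn back before reaching the crossing. I would make this precise by drawing the reduced block and checking, via planar isotopy (Reidemeister~II), that the surviving $k$ strands weave exactly as in the $k$-cable.

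The third step is to account for the leftover arcs globally and delete them. Here I would check that, once the smoothing has been carried out at \emph{every} crossing site simultaneously, the two capped-off strands close up into a split family of trivial, mutually unlinked circles lying in a planar neighborhood disjoint from the surviving $k$-cable; removing these circles changes neither the isotopy type nor the blackboard framing of what remains. Combined with the local reductions of the crossing blocks, this yields exactly $D^{k}$. Finally I would verify that the cyclic ordering of the parallel strands around $D$, and hence the blackboard framing, is respected throughout, so that the surviving diagram is genuinely $D^{k}$ and not merely some other $k$-cable.

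The main obstacle I anticipate lies in the global bookkeeping of the second and third steps. The smoothing is transparent at an isolated crossing of $D$, but one must confirm that the local turnback resolutions patch together coherently as one traverses the whole knot — that no unintended crossing between the capped-off strands and the surviving $k$ strands survives the Reidemeister~II simplifications, and that the two removed strands really do close up into \emph{split} trivial circles rather than into a curve knotted or linked with $D^{k}$. Controlling this coherence, essentially verifying that the chosen resolution at each crossing site is compatible with the way the $k+2$ parallel strands return to themselves around $D$, is where the substantive work lies; the local block reduction and the final deletion of split trivial circles are by comparison routine.
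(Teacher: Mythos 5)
There is a genuine gap, and it is in the choice of which crossings you smooth. You propose to resolve only the crossings in which the two chosen strands meet \emph{each other} --- the four crossings of the $2\times 2$ sub-block at each crossing site of $D$. The paper's construction smooths strictly more: every crossing involving a contracted strand, namely the four mutual crossings (Fig.~\ref{type1_smooth} (a)) \emph{and}, at each encounter of the contracted pair with a non-contracted strand, the two crossings there (Fig.~\ref{type1_smooth} (b)), resolved so that the contracted strands turn back on both sides while the non-contracted strand passes through untouched. This second family of smoothings is exactly what your argument omits, and without it the key claim of your second step fails: after resolving only the four mutual crossings, the turnback arcs of the two outer strands must still traverse the remaining $k$ strands of the block, so each crossing site of $D$ retains $4k$ crossings between the capped-off curves and the surviving cable. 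The outer strands therefore do \emph{not} ``no longer pass through the block,'' and the $(k+2)\times(k+2)$ block does not collapse to the $k\times k$ block by your smoothings alone.

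You half-acknowledge this by invoking ``planar isotopy (Reidemeister~II)'' to clean up, but Reidemeister~II is not plane isotopy: it changes the diagram. Your construction shows only that the smoothed diagram becomes $D^{k}$ together with split unknots \emph{after} R-II moves, i.e., it recovers $D^{k}$ up to link equivalence. That is weaker than the theorem asserts and than what the paper needs: the statement is used (Definition~\ref{def_type1}, Proposition~\ref{contracted_even}, the definition of Type $1_j$, and the map $d'_1$) as a statement about Kauffman states, where choosing these markers in $D^{k+2}$ must yield, on the nose up to plane isotopy, the diagram $D^{k}$ disjoint union a finite collection of Jordan circles (the contracted circles), which are subsequently labeled and deleted to define chain maps. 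With your smaller set of smoothings the result is not of this form, so the later constructions could not be carried out. The fix is precisely the paper's rule (b): include the pairwise smoothings at every crossing of the contracted pair with a non-contracted strand; then the contracted remnants close up into disjoint Jordan circles as in Fig.~\ref{type1_closed}, and no Reidemeister moves are needed.
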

\begin{proof}
Take any two neighboring strands from $D^{k+2}$, henceforth referred to as {\it{contracted strands}}, and choose any direction for them.  Strands that do not belong to contracted strands are called {\it{non-contracted strands}}.  Along the chosen direction, we smooth the crossings of the contracted strands according to the rule shown in Fig. \ref{type1_smooth}.  If contracted strands meet another pair of contracted strands, we smooth four crossings, as shown in Fig.\ref{type1_smooth} (a).  If contracted strands meet a non-contracted strand, we smooth two crossings, as in Fig. \ref{type1_smooth} (b).  After smoothing, we have the knot diagram $D^{k}$ and a finite number of Jordan circles, called {\it{contracted circles}}.  This is because any arcs in the right-hand images of Figs. \ref{type1_smooth} (a) and (b) will connect simple arcs or other non-contracted strands as in Fig. \ref{type1_closed}.  Finally, the obtained diagram $D^{k}$ does not depend on the choice of direction and contracted strands, since the smoothing rule shown in Fig.\ref{type1_smooth} (a) does not depend on these choices and the rule of Fig.\ref{type1_smooth} (b) does not change the diagram $D^{k}$ using Fig.\ref{type1_closed}.  This concludes the proof.  
\end{proof}
\begin{figure}
\begin{center}
\includegraphics[width=11cm,bb=0 0 432.5 77.34]{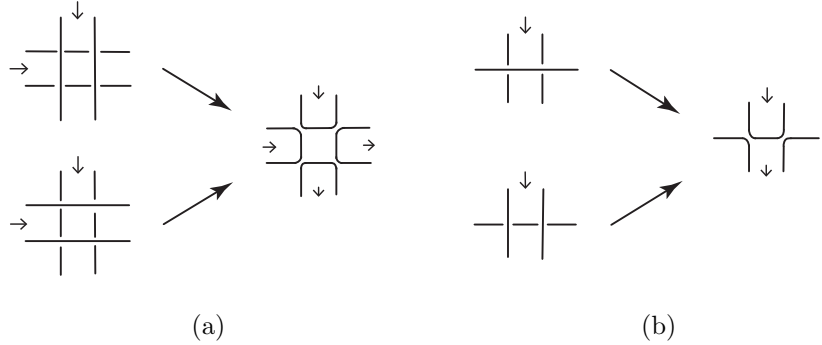}
\end{center}
\begin{picture}(0,0)
\put(85,0){(a)}
\put(255,0){(b)}
\end{picture}
\caption{Rule for smoothing the crossings of two neighboring strands, called contracted strands.  The small arrows show the directions of two neighboring strands.  (a) Four crossings consisting of contracted strands.  (b) Two crossings consisting of contracted strands and non-contracted strands.}\label{type1_smooth}
\end{figure}

\begin{figure}
\begin{center}
\includegraphics[width=5cm,bb=0 0 350.5 224.33]{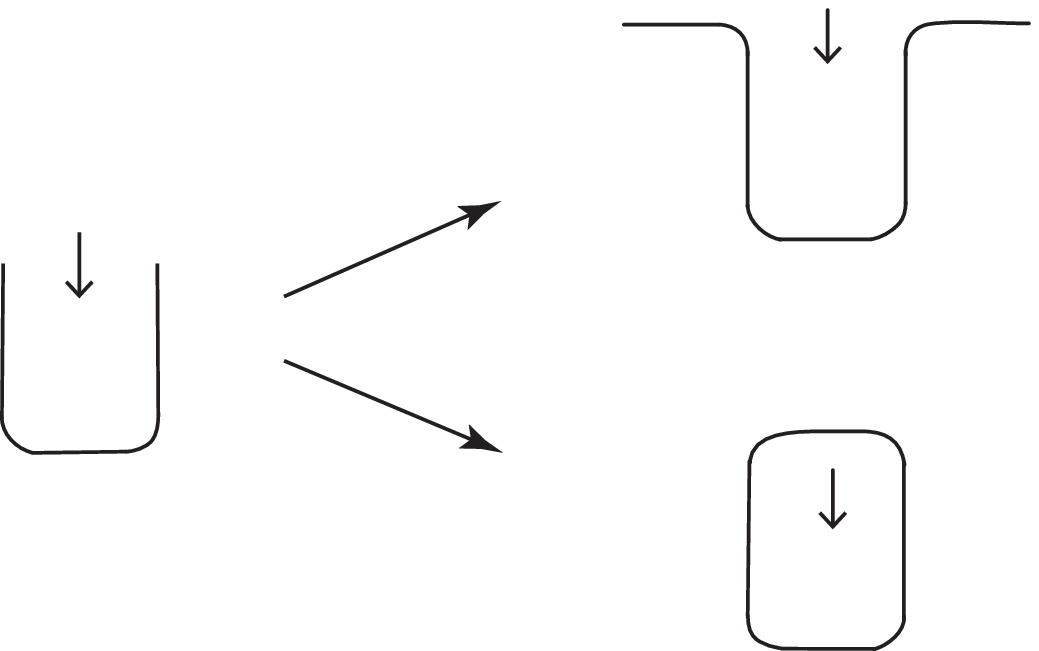}
\includegraphics[width=5cm,bb=0 0 300 200]{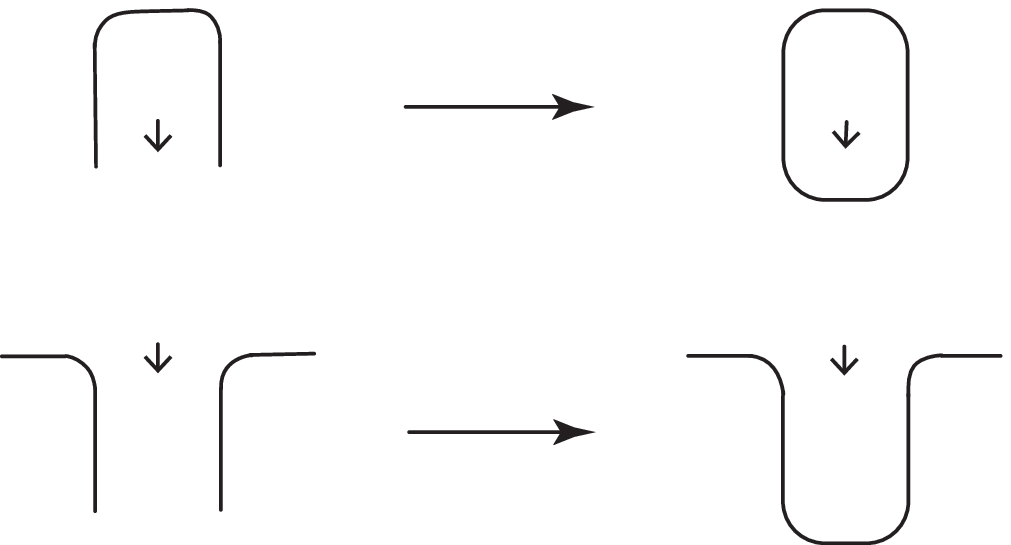}
\end{center}
\caption{Possible arcs after smoothing the crossings in Fig. \ref{type1_smooth}.}\label{type1_closed}
\end{figure}

Extending the above discussion to that of the link case, we have the following.  
\begin{theorem}\label{small_cable_l}
The $(k_1, k_2, \dots, k_i, \dots, k_l)$-cable of a $l$-component link diagram $D$ can be taken from the $(k_1, k_2, \dots, k_{i} + 2, \dots, k_l)$-cable of $D$.  
\end{theorem}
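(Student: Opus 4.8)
The plan is to reduce Theorem \ref{small_cable_l} to the already-proven knot case, Theorem \ref{small_cable_k}, by isolating the single component whose cabling parameter changes. The key observation is that the operation ``pass from $k_i+2$ to $k_i$'' only touches the parallel strands arising from the $i$-th component of $D$; the strands coming from the other components are left entirely alone. So I would first fix attention on the $i$-th component $L_i$ of $L$ and its diagram $D_i \subset D$, and note that in $D^{(k_1,\dots,k_i+2,\dots,k_l)}$ this component contributes exactly $k_i+2$ mutually parallel strands, obtained by the blackboard-framing push-off described in Sec. \ref{intro}.

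First I would choose the two contracted strands to be a neighboring pair among these $k_i+2$ parallel copies of $L_i$, orient them, and apply the smoothing rule of Fig. \ref{type1_smooth} exactly as in the knot proof. The crucial point is that the crossings created by $L_i$ fall into two types: self-crossings of $L_i$ (where two parallel bundles from $L_i$ cross each other) and crossings of $L_i$ with another component $L_{i'}$. At a self-crossing the contracted pair meets another pair of $L_i$-strands and we smooth four crossings as in Fig. \ref{type1_smooth}(a); at a crossing of $L_i$ with $L_{i'}$ the contracted pair meets non-contracted strands (the $k_{i'}$ copies of $L_{i'}$) and we smooth two crossings as in Fig. \ref{type1_smooth}(b). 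In both cases the local pictures are identical to those already analyzed in the proof of Theorem \ref{small_cable_k}, since the smoothing rule is local and is indifferent to which component a non-contracted strand belongs to.

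After performing all these smoothings, the same reconnection analysis via Fig. \ref{type1_closed} shows that the contracted strands close up into a finite collection of contracted Jordan circles while the remaining strands reassemble into precisely the $(k_1,\dots,k_i,\dots,k_l)$-cable of $D$: the $i$-th component now carries $k_i$ parallel strands and every other component $L_{i'}$ still carries its original $k_{i'}$ strands, untouched. As in the knot case, independence from the choice of contracted pair and of orientation follows from the fact that rule (a) is choice-independent and rule (b) leaves $D^{(\dots,k_i,\dots)}$ unchanged by Fig. \ref{type1_closed}.

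The main obstacle I anticipate is purely bookkeeping rather than conceptual: one must verify carefully that at a self-crossing of $L_i$ the four strands meeting there genuinely split into two contracted and two non-contracted strands in the configuration demanded by Fig. \ref{type1_smooth}(a), and that no crossing is accidentally smoothed twice or overlooked when $L_i$ crosses itself with high multiplicity. Once the local correspondence of crossing types with the panels of Fig. \ref{type1_smooth} is established, the theorem follows by applying the knot-case argument componentwise to $L_i$, so I would phrase the proof as ``the argument of Theorem \ref{small_cable_k} applies verbatim to the $k_i+2$ parallel strands of the $i$-th component, treating all strands of the other components as non-contracted,'' and then simply record the resulting diagram.
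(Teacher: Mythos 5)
Your proposal is correct and takes essentially the same approach as the paper: the paper's proof likewise fixes the contracted pair among the parallel strands of the $i$-th component, declares every strand of the other components non-contracted, applies the rule of Fig. \ref{type1_smooth} (b) at crossings with those components, and defers the self-crossings of the $i$-th component to the argument of Theorem \ref{small_cable_k}, using Fig. \ref{type1_closed} for the reconnection analysis and for independence of the choices. The only difference is presentational: you make the componentwise reduction explicit, while the paper states it tersely as an extension of the knot case.
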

\begin{proof}
Let us choose an arbitrary $i$-th component of two neighboring strands, also called {\it{contracted strands}}, and their direction.  

For the link case, contracted strands meet other components.  All strands of the other components are non-contracted strands.  In this case, we apply smoothing of the kind shown in Fig. \ref{type1_smooth} (b).  After smoothing, considering Fig. \ref{type1_closed}, other components do not change the link diagram.  Let us call these Jordan circles {\it{contracted circles}}, as in the proof of Theorem \ref{small_cable_k}.  At their crossings, no new contracted circles arise, and the choice of contracted strands and their direction does not change the link diagram $D^{(k_1, k_2, \dots, k_l)}$ up to plane isotopy.  In the case of smoothing crossings of contracted strands as illustrated in Fig. \ref{type1_smooth} (a) in the proof of Theorem \ref{small_cable_k}, the choices of contracted strands and their direction does not change the link diagram $D^{(k_1, k_2, \dots, k_l)}$ up to plane isotopy and contracted circles, which are a finite number of Jordan circles.  
\end{proof}

\begin{definition}\label{def_type1}
For the $(k_1, k_2, \dots, {k_i} + 2,$ $\dots,$ $k_l)$-cable of a $l$-component link diagram $D$, the $(k_1, k_2, \dots, {k_i},$ $\dots,$ $k_l)$-cable of $D$ with contracted circles defined in the above construction is called a {\it{Type $1$ diagram}}, or simply {\it{Type $1$}}.  
\end{definition}

\begin{proposition}\label{contracted_even}
The number of contracted circles is even.  
\end{proposition}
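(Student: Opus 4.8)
The plan is to realize the contracted circles as the boundary of a disjoint union of annuli, so that they pair up automatically. First I would observe that the two contracted strands chosen in the construction behind Theorem~\ref{small_cable_l} (equivalently Theorem~\ref{small_cable_k}) are two parallel push-offs of a single component of $D$ taken with respect to the blackboard framing. Hence they are the two boundary circles $\partial B = K_1 \sqcup K_2$ of an annular band $B$, namely the thin ribbon neighbourhood of that component in the plane, immersed with self-overlaps at the crossings of $D$. Because the two strands run parallel and never interchange as one travels once around the component, the band $B$ closes up as an orientable annulus rather than a one-sided (M\"obius) band.

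Next I would track what the smoothing of Fig.~\ref{type1_smooth} does to this band. The crossings of the big cable at which the contracted strands meet one another (Fig.~\ref{type1_smooth}(a)) are exactly the self-overlaps of $B$, where one sub-band lies over another; smoothing the four crossings there is precisely the operation that separates the two sub-bands so they run alongside one another without crossing. At a crossing where the contracted strands meet a non-contracted strand (Fig.~\ref{type1_smooth}(b)), the band is pushed off that strand and left unbroken, as recorded by the joinings in Fig.~\ref{type1_closed}. In either case the local move is an orientation-preserving band surgery that cuts $B$ and reglues it without introducing a cap, a branch point, or a one-sided identification. Consequently, after all the smoothings the band $B$ becomes an embedded compact orientable surface $F$ that is still a regular neighbourhood of a $1$-manifold, i.e.\ a disjoint union of annuli; and by construction its boundary $\partial F$ is exactly the collection of contracted circles of Definition~\ref{def_type1}, disjoint from the resulting small cable.

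Finally, since each annulus component of $F$ has exactly two boundary circles and the components are pairwise disjoint, the number of contracted circles equals twice the number of annuli, which is even; this proves the proposition.

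The step I expect to be the main obstacle is the middle one: verifying rigorously, directly from the local pictures in Figs.~\ref{type1_smooth} and~\ref{type1_closed}, that every smoothing in the construction is an orientation-preserving band surgery keeping $F$ a thickening of a $1$-manifold, so that no component of $F$ can be a disk, a pair of pants, or a M\"obius band. Once this local claim is in hand the parity count is immediate. As an independent check, one may note that when the reducing smoothing is realized as the oriented resolution of the $2$-cable, the contracted circles are just the two parallel copies of each Seifert circle of the underlying diagram, so their number equals $2s$ with $s$ the number of Seifert circles, again manifestly even.
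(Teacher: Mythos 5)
Your proof breaks down at exactly the step you flagged as the main obstacle, and it cannot be repaired in the form you propose. The smoothings of Fig.~\ref{type1_smooth} are \emph{not} cap-free band surgeries. At a crossing of the contracted strands with a non-contracted strand (Fig.~\ref{type1_smooth}(b)), the two contracted strands are reconnected \emph{to each other} on either side of the non-contracted strand: the boundary of your band $B$ makes a U-turn across the band on both sides of the cut, which is precisely what the pictures (a-5)--(a-8) of Fig.~\ref{type12} show (the two resulting circles there are the $p$ and $q$ of Type $1_j$). As a surface operation this is a transversal cut whose cut arcs become boundary, i.e.\ it caps the ends; the same happens to both passages of $B$ at the four-crossing configurations of Fig.~\ref{type1_smooth}(a). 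Hence, as soon as any crossing is smoothed, $B$ degenerates into a disjoint union of rectangles -- disks, not annuli -- and each piece contributes exactly \emph{one} contracted circle, not two. The simplest example already refutes the annulus picture: for the $2$-cable of a one-crossing unknot diagram (a curl, drawn with its small loop outside the round circle), Type $1$ yields exactly two contracted circles, one running along the small loop and one along the large loop; each bounds a disk-shaped piece of the cut-open band, and the two circles are not nested, so they cannot bound any disjoint union of annuli embedded in the plane.

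Once the pieces are disks, the number of contracted circles equals the number of cuts of the band (up to corrections that come in pairs at self-crossing panels), so the actual content of Proposition~\ref{contracted_even} is that the number of cuts is even. For a knot this is automatic, since every crossing of $D$ is traversed by the band twice; but for a link one also needs the global fact that the cabled component crosses each \emph{other} component an even number of times (two disjoint closed curves in the plane always intersect evenly). Your argument never invokes this fact, which is a warning sign: a local argument that produced evenness without it would apply equally to a band crossing another strand an odd number of times, where the count is odd. Your ``independent check'' fails for the same reason: Type $1$ is not the oriented resolution of the cable (it makes U-turns where the Seifert smoothing passes through), and already for the one-crossing unknot diagram the Type-$1$ count is $2$ while $2s=4$. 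The paper's own proof proceeds quite differently, by a panel-by-panel analysis of the cable (Figs.~\ref{cable-fig} and \ref{cable-fig2}): after a plane isotopy every self-crossing panel of the cabled component contributes exactly two circles, while crossings with other components create none locally; that is, it organizes the count so that the contributions visibly occur in pairs, rather than appealing to a two-boundary-components argument that the cut-open band cannot support.
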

\begin{proof}
\begin{itemize}

\item (Knot case) First, we will show that ``the number of contracted circles is even ($\ast$)" in the case of knot diagrams.  Every set of crossings of the cable diagram of a knot can be represented as in Fig. \ref{cable-fig} (a) or as its mirror image (see Fig. \ref{type1_smooth}).  Below, we explain the details of considering Type $1$ with the help of Fig. \ref{cable-fig} (bottom left of Fig. \ref{type1_smooth} (a)); it should be note that we must consider not only those cases in Fig. \ref{cable-fig}, but also their mirror images (top left of Fig. \ref{type1_smooth} (a)), for which the discussion is similar.  After considering the Type $1$ of Fig. \ref{cable-fig} (a), we obtain one of Fig. \ref{cable-fig} (b).  Consider the four pairs of neighboring crossings connecting as shown in Fig. \ref{cable-fig} (a).  Then, this part (i.e., (a)) of Type $1$ has to be connected to two simple arcs located to the right and the bottom of Fig. \ref{cable-fig} (b).  Similarly, each of the other two simple arcs, located to the left and top of Fig. \ref{cable-fig} (b) (or its mirror image), connects to other sets of crossings in other panels.  The dashed lines in every (b)-type panel of Fig. \ref{cable-fig} represent simple arcs connecting with other panels.  By shortening the dashed lines using a plane isotopy, we see that Type $1$ consists of the set of crossings shown in Fig. \ref{cable-fig} (c) and their mirror images.  This proves ($\ast$) in the case of knot diagrams.  In summary, noting that the terms of ``(b)-type'' (``(c)-type'') contain not only (b)-types ((c)) of Fig. \ref{cable-fig} but also their mirror images, 
\begin{equation}
\begin{split}
&\quad\ {\text{Type 1 of a diagram consisting of (a)-type crossings}}\\ &= {\text{a diagram consisting of (b)-type crossings and contracted circles}} \\
&\sim {\text{a diagram consisting of (c)-type crossings and contracted circles}}
\end{split}
\end{equation}
where $\sim$ is a plane isotopy.  

\item (Link case) Second, we consider the case of link diagrams.  In this case, the sets of crossings arise as shown in Figs. \ref{cable-fig2} (a) and (b), but they do not produce any contracted circles, because these sets of crossings change to Fig. \ref{cable-fig2} (c) when we consider Type 1 for panels (a) and (b), as in the proof of Theorem \ref{small_cable_l}.  In addition, the discussion concerning Fig. \ref{cable-fig} in the previous paragraph also applies to the case of a link.  Thus, following a similar argument as above for the dashed lines, ($\ast$) still holds in the case of link diagrams.  
\end{itemize}
\end{proof}

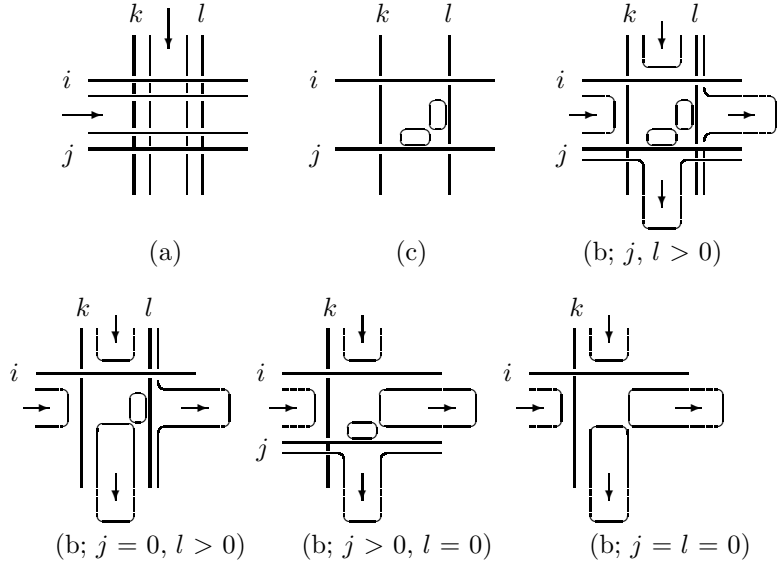
\begin{figure}[htbp]
\centering
\qquad
\qquad
\begin{minipage}{90pt}
\begin{picture}(70,90)
\put(22.5,-5){(a)}
\put(15,85){$k$}
\put(41,85){$l$}
\put(-10,32){$j$}
\put(-10,60){$i$}
\put(-10,50){\vector(1,0){15}}
\put(30,90){\vector(0,-1){15}}
\put(0,43){\line(1,0){60}}
{\thicklines
\put(0,37){\line(1,0){60}}}
{\thicklines
\put(0,63){\line(1,0){60}}}
\put(0,57){\line(1,0){60}}
{\thicklines
\put(17,20){\line(0,1){15}}}
\put(23,20){\line(0,1){15}}
{\thicklines
\qbezier(17,38.5)(17,40)(17,41.5)}
\qbezier(23,38.5)(23,40)(23,41.5)
{\thicklines
\put(43,20){\line(0,1){15}}}
\put(37,20){\line(0,1){15}}
\qbezier(37,38.5)(37,40)(37,41.5)
{\thicklines
\qbezier(43,38.5)(43,40)(43,41.5)}
{\thicklines
\put(17,45){\line(0,1){10}}}
\put(23,45){\line(0,1){10}}
{\thicklines
\qbezier(17,58.5)(17,60)(17,61.5)}
\qbezier(23,58.5)(23,60)(23,61.5)
\put(37,45){\line(0,1){10}}
{\thicklines
\put(43,45){\line(0,1){10}}}
\qbezier(37,58.5)(37,60)(37,61.5)
{\thicklines
\qbezier(43,58.5)(43,60)(43,61.5)}
{\thicklines
\put(17,65){\line(0,1){15}}}
\put(23,65){\line(0,1){15}}
\put(37,65){\line(0,1){15}}
{\thicklines
\put(43,65){\line(0,1){15}}}
\end{picture}
\end{minipage}
\begin{minipage}{90pt}
\begin{picture}(70,90)
\put(22.5,-5){(c)}
\put(15,85){$k$}
\put(41,85){$l$}
\put(-10,32){$j$}
\put(-10,60){$i$}
{\thicklines
\put(0,37){\line(1,0){60}}}
{\thicklines
\put(0,63){\line(1,0){60}}}
{\thicklines
\put(17,20){\line(0,1){15}}}
{\thicklines
\put(43,20){\line(0,1){15}}}
{\thicklines
\put(17,39){\line(0,1){22}}}
{\thicklines
\put(43,39){\line(0,1){22}}}
{\thicklines
\put(17,65){\line(0,1){15}}}
{\thicklines
\put(43,65){\line(0,1){15}}}
\put(30,41.5){\oval(11,6)}
\put(38.5,50){\oval(6,11)}
\end{picture}
\end{minipage}
\begin{minipage}{90pt}
\begin{picture}(70,90)
\put(0,-5){(b; $j$, $l$ $>$ $0$)}
\put(-5,50){\vector(1,0){10}}
\put(30,85){\vector(0,-1){10}}
\put(55,50){\vector(1,0){10}}
\put(30,25){\vector(0,-1){10}}
\put(15,85){$k$}
\put(41,85){$l$}
\put(-10,32){$j$}
\put(-10,60){$i$}
{\thicklines
\put(0,37){\line(1,0){60}}}
\put(49,43){\line(1,0){11}}
\put(49,57){\line(1,0){11}}
\qbezier(46,40)(46,43)(49,43)
\qbezier(46,60)(46,57)(49,57)
{\thicklines
\put(0,63){\line(1,0){60}}}
\qbezier(0,57)(1,57)(2,57)
\qbezier(3,57)(4,57)(5,57)
\qbezier(6,57)(7,57)(8,57)
\qbezier(0,43)(1,43)(2,43)
\qbezier(3,43)(4,43)(5,43)
\qbezier(6,43)(7,43)(8,43)
{\thicklines
\put(17,20){\line(0,1){12}}
\put(17,34){\line(0,1){2}}
\put(43,34){\line(0,1){2}}
}
\qbezier(23,19)(23,18)(23,17)
\qbezier(23,16)(23,15)(23,14)
\qbezier(23,13)(23,12)(23,11)
\put(23,20){\line(0,1){10}}
\qbezier(23,30)(23,33)(20,33)
\put(20,33){\line(-1,0){20}}
{\thicklines
\put(43,20){\line(0,1){12}}
}
\put(46,20){\line(0,1){12}}
\put(46,34){\line(0,1){2}}
\qbezier(37,19)(37,18)(37,17)
\qbezier(37,16)(37,15)(37,14)
\qbezier(37,13)(37,12)(37,11)
\put(37,20){\line(0,1){10}}
\qbezier(37,30)(37,33)(40,33)
\put(40,33){\line(1,0){20}}
{\thicklines
\put(17,65){\line(0,1){15}}}
\qbezier(23,80)(23,79)(23,78)
\qbezier(23,77)(23,76)(23,75)
\qbezier(23,74)(23,73)(23,72)
\qbezier(37,80)(37,79)(37,78)
\qbezier(37,77)(37,76)(37,75)
\qbezier(37,74)(37,73)(37,72)
{\thicklines
\put(43,65){\line(0,1){15}}}
\put(46,65){\line(0,1){15}}
\put(30,71){\oval(14,6)[b]}
\put(30,10){\oval(14,6)[b]}
\put(9,50){\oval(6,14)[r]}
\put(70,50){\oval(6,14)[r]}
\qbezier(61,43)(62,43)(63,43)
\qbezier(64,43)(65,43)(66,43)
\qbezier(67,43)(68,43)(69,43)
\qbezier(61,57)(62,57)(63,57)
\qbezier(64,57)(65,57)(66,57)
\qbezier(67,57)(68,57)(69,57)
\put(30,41.5){\oval(11,6)}
\put(38.5,50){\oval(6,11)}
{\thicklines
\put(17,39){\line(0,1){22}}}
{\thicklines
\put(43,39){\line(0,1){22}}}
\end{picture}
\end{minipage}\\
\begin{minipage}{90pt}
\begin{picture}(70,110)
\put(7,-5){(b; $j$ $=$ $0$, $l$ $>$ $0$)}
\put(-5,50){\vector(1,0){10}}
\put(30,85){\vector(0,-1){10}}
\put(55,50){\vector(1,0){10}}
\put(30,25){\vector(0,-1){10}}
\put(15,85){$k$}
\put(41,85){$l$}
\put(-10,60){$i$}
\put(49,43){\line(1,0){11}}
\put(49,57){\line(1,0){11}}
\qbezier(46,40)(46,43)(49,43)
\qbezier(46,60)(46,57)(49,57)
\put(46,40){\line(0,-1){20}}
{\thicklines
\put(0,63){\line(1,0){60}}}
\qbezier(0,57)(1,57)(2,57)
\qbezier(3,57)(4,57)(5,57)
\qbezier(6,57)(7,57)(8,57)
\qbezier(0,43)(1,43)(2,43)
\qbezier(3,43)(4,43)(5,43)
\qbezier(6,43)(7,43)(8,43)
\qbezier(23,19)(23,18)(23,17)
\qbezier(23,16)(23,15)(23,14)
\qbezier(23,13)(23,12)(23,11)
\put(23,20){\line(0,1){22}}
\qbezier(37,19)(37,18)(37,17)
\qbezier(37,16)(37,15)(37,14)
\qbezier(37,13)(37,12)(37,11)
\put(37,20){\line(0,1){22}}
{\thicklines
\put(17,65){\line(0,1){15}}}
\qbezier(23,80)(23,79)(23,78)
\qbezier(23,77)(23,76)(23,75)
\qbezier(23,74)(23,73)(23,72)
\qbezier(37,80)(37,79)(37,78)
\qbezier(37,77)(37,76)(37,75)
\qbezier(37,74)(37,73)(37,72)
{\thicklines
\put(43,65){\line(0,1){15}}}
\put(46,65){\line(0,1){15}}
\put(30,71){\oval(14,6)[b]}
\put(30,10){\oval(14,6)[b]}
\put(9,50){\oval(6,14)[r]}
\put(70,50){\oval(6,14)[r]}
\qbezier(61,43)(62,43)(63,43)
\qbezier(64,43)(65,43)(66,43)
\qbezier(67,43)(68,43)(69,43)
\qbezier(61,57)(62,57)(63,57)
\qbezier(64,57)(65,57)(66,57)
\qbezier(67,57)(68,57)(69,57)
\put(30,41){\oval(13.7,6)[t]}
\put(38.5,50){\oval(6,11)}
{\thicklines
\put(17,20){\line(0,1){41}}}
{\thicklines
\put(43,20){\line(0,1){41}}}
\end{picture}
\end{minipage}
\begin{minipage}{90pt}
\begin{picture}(70,110)
\put(7,-5){(b; $j$ $>$ $0$, $l$ $=$ $0$)}
\put(-5,50){\vector(1,0){10}}
\put(30,85){\vector(0,-1){10}}
\put(55,50){\vector(1,0){10}}
\put(30,25){\vector(0,-1){10}}
\put(15,85){$k$}
\put(-10,32){$j$}
\put(-10,60){$i$}
{\thicklines
\put(0,37){\line(1,0){60}}}
\put(60,43){\line(-1,0){21}}
\put(60,57){\line(-1,0){21}}
{\thicklines
\put(0,63){\line(1,0){60}}}
\qbezier(0,57)(1,57)(2,57)
\qbezier(3,57)(4,57)(5,57)
\qbezier(6,57)(7,57)(8,57)
\qbezier(0,43)(1,43)(2,43)
\qbezier(3,43)(4,43)(5,43)
\qbezier(6,43)(7,43)(8,43)
{\thicklines
\put(17,20){\line(0,1){12}}
\put(17,34){\line(0,1){2}}
}
\qbezier(23,19)(23,18)(23,17)
\qbezier(23,16)(23,15)(23,14)
\qbezier(23,13)(23,12)(23,11)
\put(23,20){\line(0,1){10}}
\qbezier(23,30)(23,33)(20,33)
\put(20,33){\line(-1,0){20}}
\qbezier(37,19)(37,18)(37,17)
\qbezier(37,16)(37,15)(37,14)
\qbezier(37,13)(37,12)(37,11)
\put(37,20){\line(0,1){10}}
\qbezier(37,30)(37,33)(40,33)
\put(40,33){\line(1,0){20}}
{\thicklines
\put(17,65){\line(0,1){15}}}
\qbezier(23,80)(23,79)(23,78)
\qbezier(23,77)(23,76)(23,75)
\qbezier(23,74)(23,73)(23,72)
\qbezier(37,80)(37,79)(37,78)
\qbezier(37,77)(37,76)(37,75)
\qbezier(37,74)(37,73)(37,72)
\put(30,71){\oval(14,6)[b]}
\put(30,10){\oval(14,6)[b]}
\put(9,50){\oval(6,14)[r]}
\put(70,50){\oval(6,14)[r]}
\qbezier(61,43)(62,43)(63,43)
\qbezier(64,43)(65,43)(66,43)
\qbezier(67,43)(68,43)(69,43)
\qbezier(61,57)(62,57)(63,57)
\qbezier(64,57)(65,57)(66,57)
\qbezier(67,57)(68,57)(69,57)
\put(30,41.5){\oval(11,6)}
\put(39.5,50){\oval(6,13.7)[l]}
{\thicklines
\put(17,39){\line(0,1){22}}}
\end{picture}
\end{minipage}
\begin{minipage}{90pt}
\begin{picture}(70,110)
\put(60,43){\line(-1,0){21}}
\put(60,57){\line(-1,0){21}}
\put(23,20){\line(0,1){22}}
\put(37,20){\line(0,1){22}}
\put(22.5,-5){(b; $j$ $=$ $l$ $=$ $0$)}
\put(-5,50){\vector(1,0){10}}
\put(30,85){\vector(0,-1){10}}
\put(55,50){\vector(1,0){10}}
\put(30,25){\vector(0,-1){10}}
\put(15,85){$k$}
\put(-10,60){$i$}
{\thicklines
\put(0,63){\line(1,0){60}}}
\qbezier(0,57)(1,57)(2,57)
\qbezier(3,57)(4,57)(5,57)
\qbezier(6,57)(7,57)(8,57)
\qbezier(0,43)(1,43)(2,43)
\qbezier(3,43)(4,43)(5,43)
\qbezier(6,43)(7,43)(8,43)
{\thicklines
\put(17,20){\line(0,1){42}}
}
\qbezier(23,19)(23,18)(23,17)
\qbezier(23,16)(23,15)(23,14)
\qbezier(23,13)(23,12)(23,11)
\qbezier(37,19)(37,18)(37,17)
\qbezier(37,16)(37,15)(37,14)
\qbezier(37,13)(37,12)(37,11)
{\thicklines
\put(17,65){\line(0,1){15}}}
\qbezier(23,80)(23,79)(23,78)
\qbezier(23,77)(23,76)(23,75)
\qbezier(23,74)(23,73)(23,72)
\qbezier(37,80)(37,79)(37,78)
\qbezier(37,77)(37,76)(37,75)
\qbezier(37,74)(37,73)(37,72)
\put(30,71){\oval(14,6)[b]}
\put(30,10){\oval(14,6)[b]}
\put(9,50){\oval(6,14)[r]}
\put(70,50){\oval(6,14)[r]}
\qbezier(61,43)(62,43)(63,43)
\qbezier(64,43)(65,43)(66,43)
\qbezier(67,43)(68,43)(69,43)
\qbezier(61,57)(62,57)(63,57)
\qbezier(64,57)(65,57)(66,57)
\qbezier(67,57)(68,57)(69,57)
\put(30,40){\oval(13.7,6)[t]}
\put(40.5,50){\oval(6,13.7)[l]}
\end{picture}
\end{minipage}
\caption{The thin lines denote one-cable strands.  The numbers $i$, $j$, $k$, and $l$ ($\ge 0$) with thick lines indicate the number of parallel strands.  (a) has $i+1$, $j+1$, $k+1$, and $l+1$-cable strands.  Circles in (b) and (c) represent contracted circles.  In the case of knot diagrams, there are always has two cases: 1) $i$ $=$ $l$ and 2) $j$ $=$ $k$, $i$ $=$ $k$, and $j$ $=$ $l$.  }\label{cable-fig}
\end{figure}
\begin{figure}[htbp]
\centering
\qquad
\qquad
\begin{minipage}{80pt}
\begin{picture}(70,80)
\put(-10,47){$l$}
{\thicklines
\put(0,50){\line(1,0){18}}
\put(22,50){\line(1,0){16}}
\put(42,50){\line(1,0){18}}
}
\put(20,30){\line(0,1){40}}
\put(40,30){\line(0,1){40}}
\put(23,0){(a)}
\put(30,80){\vector(0,-1){15}}
\qbezier(20,21)(20,22)(20,23)
\qbezier(20,24)(20,25)(20,26)
\qbezier(20,27)(20,28)(20,29)
\qbezier(40,21)(40,22)(40,23)
\qbezier(40,24)(40,25)(40,26)
\qbezier(40,27)(40,28)(40,29)
\qbezier(20,71)(20,72)(20,73)
\qbezier(20,74)(20,75)(20,76)
\qbezier(20,77)(20,78)(20,79)
\qbezier(40,71)(40,72)(40,73)
\qbezier(40,74)(40,75)(40,76)
\qbezier(40,77)(40,78)(40,79)
\end{picture}
\end{minipage}
\begin{minipage}{80pt}
\begin{picture}(70,80)
\put(-10,47){$l$}
{\thicklines
\put(0,50){\line(1,0){60}}}
\put(20,30){\line(0,1){16}}
\put(40,30){\line(0,1){16}}
\qbezier(20,21)(20,22)(20,23)
\qbezier(20,24)(20,25)(20,26)
\qbezier(20,27)(20,28)(20,29)
\qbezier(40,21)(40,22)(40,23)
\qbezier(40,24)(40,25)(40,26)
\qbezier(40,27)(40,28)(40,29)
\put(20,54){\line(0,1){16}}
\put(40,54){\line(0,1){16}}
\qbezier(20,71)(20,72)(20,73)
\qbezier(20,74)(20,75)(20,76)
\qbezier(20,77)(20,78)(20,79)
\qbezier(40,71)(40,72)(40,73)
\qbezier(40,74)(40,75)(40,76)
\qbezier(40,77)(40,78)(40,79)
\put(23,0){(b)}
\put(30,80){\vector(0,-1){15}}
\end{picture}
\end{minipage}
\begin{minipage}{80pt}
\begin{picture}(70,80)
\put(-10,47){$l$}
{\thicklines
\put(0,50){\line(1,0){60}}}
\put(23,0){(c)}
\end{picture}
\end{minipage}
\caption{(a), (b): Contracted strands encountered by $l$ parallel non-contracted strands.  (c): Type 1 corresponding to both (a) and (b).}\label{cable-fig2}
\end{figure}
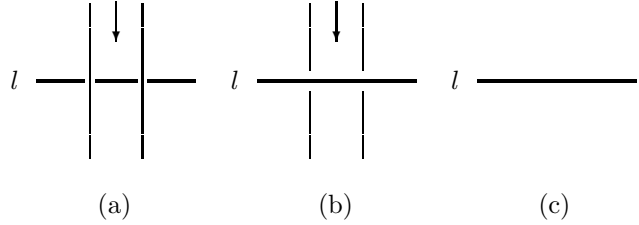

\subsection{Enhanced Kauffman states of Type 1 preserving grading $j$}
\subsubsection{Definition of Type $1_j$}\label{sec_def_type1}
On the basis of Theorem \ref{small_cable_l} and Proposition \ref{contracted_even}, we can define a map preserving grading $j$ between complexes of the Khovanov homology from a complex of a big cabling diagram to that of a small cabling diagram.  To do so, we must choose ``good'' enhanced states for Type 1, as defined below.  

Let us fix the tuple of nonnegative integers $\textbf{k}$ $=$ $(k_1, k_2, \dots, {k_i} + 2, \dots k_l)$ and an $l$-component link $D$.  By choosing markers that give a Type 1 smoothing of $D^{\textbf{k}}$, we have a $(k_1, k_2, \dots, k_i, \dots, k_l)$-cable of $D$.  Here, we have an even number of contracted circles (by Prop. \ref{contracted_even}).  We can define $x$ and $1$ of these contracted circles in Type 1 that preserves $j$.  For every panel of the kind in Fig. \ref{cable-fig} (c) of Type 1 or its mirror image, we associate $x$ ($1$) with the bottom (right) circle in the rectangle of Fig. \ref{cable-fig} (c) or its mirror image as one of the cases shown in Fig. \ref{cable-fig3}.  In other words, proceeding along contracted strands, every time we take the markers of crossings of contracted strands of $D^{\textbf{k}}$ giving Type $1$ smoothing, we choose either $x$ or $1$ for the contracted circles from Fig. \ref{cable-fig3}.  We then assign arbitrary markers for all crossings of non-contracted strands, and choose $x$ or $1$ for non-contracted strands that meet contracted strands as in (a-5) or (a-7) of Fig. \ref{type12}.  Next, we arbitrarily choose $x$ or $1$ for the other circles of the enhanced state.  The enhanced state given by the above process is called {\it{Type $1_j$}} (Fig. \ref{cable-fig3}).  For these enhanced states of Type $1_j$, local neighborhoods of crossings of contracted strands are characterized by (a-1) and (a-2) in Fig. \ref{type-figu1}, (a-3) and (a-4) in Fig. \ref{type-figu2}, and (a-5)--(a-8) in Fig. \ref{type12}.  
\begin{figure}[htbp]
\begin{center}
\quad\ 
\begin{minipage}{80pt}
\begin{picture}(70,85)
\put(-10,50){\vector(1,0){15}}
\put(30,90){\vector(0,-1){15}}
\put(15,85){$k$}
\put(41,85){$l$}
\put(-10,32){$j$}
\put(-10,60){$i$}
{\thicklines
\put(17,20){\line(0,1){60}}}
{\thicklines
\put(43,20){\line(0,1){60}}}
{\thicklines
\put(0,37){\line(1,0){15}}}
{\thicklines
\put(0,63){\line(1,0){15}}}
{\thicklines
\put(19,37){\line(1,0){22}}}
{\thicklines
\put(19,63){\line(1,0){22}}}
{\thicklines
\put(45,37){\line(1,0){15}}}
{\thicklines
\put(45,63){\line(1,0){15}}}
\put(30,41.5){\oval(11,6)}
\put(28,40.1){\tiny $x$}
\put(38.5,50){\oval(6,11)}
\put(36.7,48.2){\tiny $1$}
\end{picture}
\end{minipage}
\begin{minipage}{80pt}
\begin{picture}(70,85)
\put(-10,50){\vector(1,0){15}}
\put(30,90){\vector(0,-1){15}}
\put(15,85){$k$}
\put(41,85){$l$}
\put(-10,32){$j$}
\put(-10,60){$i$}
{\thicklines
\put(17,20){\line(0,1){60}}}
{\thicklines
\put(43,20){\line(0,1){60}}}
{\thicklines
\put(0,37){\line(1,0){15}}}
{\thicklines
\put(0,63){\line(1,0){15}}}
{\thicklines
\put(19,37){\line(1,0){22}}}
{\thicklines
\put(19,63){\line(1,0){22}}}
{\thicklines
\put(45,37){\line(1,0){15}}}
{\thicklines
\put(45,63){\line(1,0){15}}}
\put(30,41.5){\oval(11,6)}
\put(28,40.1){\tiny $1$}
\put(38.5,50){\oval(6,11)}
\put(36.7,48.2){\tiny $x$}
\end{picture}
\end{minipage}
\begin{minipage}{80pt}
\begin{picture}(70,85)
\put(-10,50){\vector(1,0){15}}
\put(30,90){\vector(0,-1){15}}
\put(15,85){$k$}
\put(41,85){$l$}
\put(-10,32){$j$}
\put(-10,60){$i$}
{\thicklines
\put(0,37){\line(1,0){60}}}
{\thicklines
\put(0,63){\line(1,0){60}}}
{\thicklines
\put(17,20){\line(0,1){15}}}
{\thicklines
\put(43,20){\line(0,1){15}}}
{\thicklines
\put(17,39){\line(0,1){22}}}
{\thicklines
\put(43,39){\line(0,1){22}}}
{\thicklines
\put(17,65){\line(0,1){15}}}
{\thicklines
\put(43,65){\line(0,1){15}}}
\put(30,41.5){\oval(11,6)}
\put(28,40.1){\tiny $1$}
\put(38.5,50){\oval(6,11)}
\put(36.7,48.2){\tiny $x$}
\end{picture}
\end{minipage}
\begin{minipage}{80pt}
\begin{picture}(70,85)
\put(-10,50){\vector(1,0){15}}
\put(30,90){\vector(0,-1){15}}
\put(15,85){$k$}
\put(41,85){$l$}
\put(-10,32){$j$}
\put(-10,60){$i$}
{\thicklines
\put(0,37){\line(1,0){60}}
\put(0,63){\line(1,0){60}}
\put(17,20){\line(0,1){15}}
\put(43,20){\line(0,1){15}}
\put(17,39){\line(0,1){22}}
\put(43,39){\line(0,1){22}}
\put(17,65){\line(0,1){15}}
\put(43,65){\line(0,1){15}}
}
\put(30,41.5){\oval(11,6)}
\put(28,40.1){\tiny $x$}
\put(38.5,50){\oval(6,11)}
\put(36.9,48.2){\tiny $1$}
\end{picture}
\end{minipage}
\caption{Type $1_j$.  }\label{cable-fig3}
\end{center}
\end{figure}
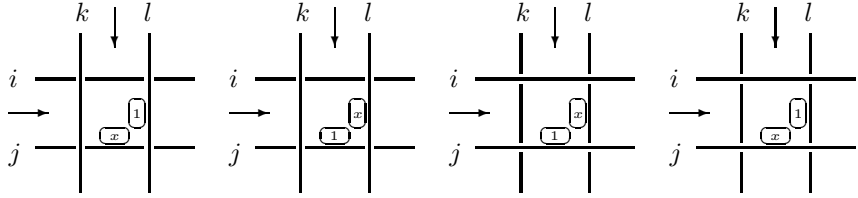

\subsubsection{Definition of Type $2_{j}$ for Type $1_{j}$}
To define the chain map $d' : C^{i, j}(D^{k+2})$ $\to$ $C^{i, j}(D^{k})$, consider an enhanced state $S$ such that $d(S)$ $=$ $\pm$ Type $1_j$ + other terms.  We have $d'(\pm {\text{Type}} 1_j )$ $+$ $d'({\text{other terms}})$ $=$ $d'(d(S))$ $=$ $d(d'(S))$ $=$ $0$ if $d'(S)$ $=$ $0$.  Then, ``other terms'' must then contain a term such that $d'({\text{the term}})$ and $d'(\pm {\text{Type}} 1_j)$ cancel out.  

Considering the above, let us define Type $2_{j}$ for Type $1_{j}$ using Figs. \ref{type-figu1}, \ref{type-figu2}, and \ref{type12}.  The correspondences of the above cancelations is assured by Lemmas \ref{lemma_type1_lem1}--\ref{lemma_type1_lem3} given below.  For readers who need a categorification with the coefficient $\mathbb{Z}$, the definition of the orientation of the order of negative markers provided in Sec. \ref{z_homology} is required (see Def. \ref{alternative_marker}).  
\begin{lemma}\label{lemma_type1_lem1}
Let $d$ be the coboundary operator of the Khovanov homology of Sec. \ref{preliminary} and let $S$ be one of the enhanced states of the panels shown in Fig. \ref{pre_type1}.  If $d(S)$ contains either {\rm{(a-1)}} or {\rm{(a-2)}} of Fig. \ref{type-figu1}, one of the following is established.  
\end{lemma}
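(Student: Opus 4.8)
The plan is to argue purely locally, exploiting the fact that the Khovanov differential $d$ alters exactly one marker at a time, so that a single term $T$ occurring in $d(S)$ pins down $S$ almost completely. Concretely, the hypothesis that $d(S)$ contains (a-1) or (a-2) of Fig.~\ref{type-figu1} means that there is a crossing $c$ carrying a positive marker in $S$ such that, after switching the marker at $c$ to negative and smoothing, the resulting enhanced state $T$ realizes the local picture (a-1) or (a-2) near a crossing of the contracted strands. The first step is therefore to \emph{reverse} the differential at $c$: since $S$ and $T$ agree away from $c$ and differ only by the smoothing and relabeling prescribed by Fig.~\ref{differential2}, recovering $S$ amounts to choosing which of the two smoothings was present at $c$ and reading off the induced labels through the Frobenius maps $m$ and $\Delta$.

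First I would separate the analysis according to the location of the changing crossing $c$ relative to the contracted strands. Because (a-1) and (a-2) are configurations attached to a crossing of the contracted strands, either $c$ is that very crossing --- in which case $S$ carries a positive marker there and $T$ its Type $1$ smoothing --- or $c$ lies elsewhere, so that the local picture of the contracted strands already appears in $S$ and only the labels in its neighborhood are affected by the merge or split at $c$. Enumerating these possibilities is what produces precisely the finite list of panels drawn in Fig.~\ref{pre_type1}, and this enumeration is the organizing step of the proof.

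Next, for each candidate $S$ I would run the Frobenius calculus forward to confirm that $d(S)$ does contain the prescribed term and to record the remaining terms. Here the multiplication rules $m(1\otimes 1)=1$, $m(x\otimes 1)=m(1\otimes x)=x$, $m(x\otimes x)=0$ together with the comultiplication rules $\Delta(1)=1\otimes x+x\otimes 1$ and $\Delta(x)=x\otimes x$ from Fig.~\ref{differential2} dictate exactly which labelings of the circles of $S$ are compatible with the labels $x$ and $1$ imposed on the contracted circles by Type $1_j$ (Fig.~\ref{cable-fig3}). Since $d$ preserves the grading $j$, every term so produced lies in the same $C^{i+1,j}$, which is what makes the subsequent cancellation argument of Type $2_j$ meaningful. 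Matching the labels against the target picture then yields, case by case, one of the listed conclusions.

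The main obstacle I anticipate is the comultiplication case. When $c$ is a split, $\Delta(1)=1\otimes x+x\otimes 1$ contributes two terms, and one must verify both that the term carrying the Type $1_j$ labeling genuinely appears and that the complementary term is accounted for --- it is precisely this second term that will later have to be annihilated by a Type $2_j$ contribution. Guaranteeing that the case list is truly exhaustive, covering every way the contracted and non-contracted strands meet at $c$ together with all admissible labelings, is the delicate bookkeeping on which the lemma rests; the algebra at each individual crossing is routine once the correct local picture of $S$ has been isolated.
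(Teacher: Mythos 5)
Your proposal is correct and follows essentially the same route as the paper: the paper's proof is precisely a case-by-case verification, deferring to Tables \ref{pre_type1_table1}--\ref{pre_type1_table8}, in which each admissible labeling of the panels of Fig. \ref{pre_type1} is pushed forward through the Frobenius rules of Fig. \ref{differential2} to check that every occurrence of (a-1) or (a-2) is accompanied by a matching (b-1) or (b-2), while the claim about the ``other terms'' is read off from the marker types of $S$ -- exactly the local enumeration and forward Frobenius computation you describe.
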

\begin{equation}\label{correspond1}
d(S) = {\text{(a-$i$)}} + {\text{(b-$j$)}} + {\text{other terms}}~ {\text{\rm for}}~i, j \in \{{\rm 1}, {\rm 2}\}
\end{equation}
where {\rm{``other terms''}} do not contain either {\rm{(a-$i$)}} or {\rm{(b-$j$)}} for arbitrary $i$, $j \in \{1, 2\}$.  
\begin{proof}
Every case in Fig. \ref{pre_type1} is verified using Tables \ref{pre_type1_table1}--\ref{pre_type1_table8}.  The latter part of the claim is proved using the types of markers of $S$ in Fig. \ref{pre_type1}.  

For example, when we consider an enhanced state $S$ of the left of Fig. \ref{pre_type1} that will result in Fig. \ref{table_type} (a), i.e., (a-1) or (a-2), we refer to Table \ref{pre_type1_table1}.  If the labels of $S$ are distributed as $a$ $=$ $x$, $b$ $=$ $1$, and $c$ $=$ $x$, we see the third row of Table \ref{pre_type1_table1}, which shows (\ref{correspond1}) in the case, since all possible types --- (a-1), (a-2), or (b-1) --- appear in the third row by the calculus of Fig. \ref{differential2}.  Other cases for Fig. \ref{pre_type1} and Figs. \ref{table_type} (a)--(d) can be seen by following the order of the tables.  
\end{proof}
\begin{lemma}\label{lemma_type1_lem2}
Let $d$ be the coboundary operator of the Khovanov homology of Sec. \ref{preliminary}, and let $S$ be an enhanced state of the panel shown in Fig. \ref{pre_type1b}.  If $d(S)$ contains either {\rm{(a-3)}} or {\rm{(a-4)}} of Fig. \ref{type-figu2}, one of the following is established.  
\begin{equation}\label{correspond2}
\begin{split}
d(S) &= 2 {\text{\rm (a-$i$)}} + {\text{\rm (b-3)}} + {\text{\rm(b-4)}} + {\text{\rm other terms}}~{\rm{for}}~i = 3~{\rm{or}}~4, \\
d(S) &= {\text{\rm (b-3)}} + {\text{\rm (b-4)}} + {\text{\rm other terms}},~{\rm{and}}~\\
d(S) &= {\text{\rm (a-$i$)}} + {\text{\rm (b-$j$)}} + {\text{\rm other terms}}~{\rm{for}}~{i, j \in \{3, 4\}}.\\  
\end{split}
\end{equation}  
\end{lemma}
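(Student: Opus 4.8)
The plan is to verify the lemma by the same local Frobenius-calculus bookkeeping used for Lemma \ref{lemma_type1_lem1}, now applied to the single panel of Fig. \ref{pre_type1b}. Recall that $d$ acts by changing exactly one positive marker of $S$ into a negative marker; according to whether this change merges two circles or splits one circle, the effect on the labels is governed by the multiplication $m$ or the comultiplication $\Delta$ of Fig. \ref{differential2}, namely $m(1\otimes 1)=1$, $m(x\otimes 1)=m(1\otimes x)=x$, $m(x\otimes x)=0$, $\Delta(1)=1\otimes x+x\otimes 1$, and $\Delta(x)=x\otimes x$. First I would locate, among the crossings carrying positive markers in Fig. \ref{pre_type1b}, those whose marker change yields the (a-3)/(a-4) pattern of Fig. \ref{type-figu2}; since (a-3) and (a-4) arise from \emph{splitting} a contracted circle, the relevant operation is the comultiplication $\Delta$, in contrast to the merging multiplication $m$ that produced (a-1), (a-2) in Lemma \ref{lemma_type1_lem1}.

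Next I would enumerate cases according to the labels ($x$ or $1$) on the circle that is split and on the neighboring circles at the other active crossings of the panel, recording for each assignment the complete list of enhanced states in $d(S)$ in a table in the style of Tables \ref{pre_type1_table1}--\ref{pre_type1_table8}. The branching into the three alternative forms of (\ref{correspond2}) should then be read directly off these tables: the first identity when a single distinguished state (a-$i$) is produced with multiplicity two together with (b-3) and (b-4); the second when the (a-$i$) contributions are absent or cancel, leaving only (b-3) and (b-4); and the third when one (a-$i$) pairs with one (b-$j$).

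To establish the final clause --- that ``other terms'' contains none of the four distinguished states (a-$i$), (b-$j$) --- I would argue, as in Lemma \ref{lemma_type1_lem1}, from the marker types of $S$: no crossing other than those already treated can reproduce one of these states under a marker change, because each of (a-$i$), (b-$j$) is determined by a specific smoothing and labeling of the contracted circles.

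The step I expect to be the main obstacle is accounting correctly for the coefficient $2$ in the first identity together with the signs prescribed by the order of negative markers over $\mathbb{Z}$ (Sec. \ref{z_homology}, Def. \ref{alternative_marker}); note that over $\mathbb{Z}_2$ this coefficient would vanish, so the distinction between the first two forms of (\ref{correspond2}) is an integral phenomenon. One must check not merely that the distinguished states appear, but that the multiplicity two genuinely arises --- whether from the two summands of $\Delta(1)$ collapsing to a single enhanced state or from two distinct crossings yielding the same state --- and that the orientations induced on the negative-marker orders are mutually consistent, so that (\ref{correspond2}) holds with the stated integer coefficients rather than only modulo $2$. Once the orientation conventions are fixed compatibly, the remaining verification reduces to the routine tabulation described above.
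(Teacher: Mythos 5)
Your overall strategy is the same as the paper's: the paper's entire proof of this lemma is a one-line reference to Tables \ref{pre_type1_table9}--\ref{pre_type1_table12}, which are exactly the label-by-label Frobenius-calculus tables you propose to build, and your observation that the (a-3)/(a-4) terms arise from the splitting operation $\Delta$ is confirmed there (every (a-3)/(a-4) entry appears inside $a \otimes \Delta(b) \otimes d$, $a \otimes \Delta(d)$, or $\Delta(a)$). One side remark of yours is inaccurate, though harmless here: in Lemma \ref{lemma_type1_lem1} the states (a-1)/(a-2) are \emph{not} uniformly produced by the merging map $m$; they come from $\Delta$ in the panels of Tables \ref{pre_type1_table1}, \ref{pre_type1_table2}, \ref{pre_type1_table5}, \ref{pre_type1_table7} and from $m$ only in the remaining panels.

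The genuine error is your reading of the term $2\,\text{(a-$i$)}$ and the ensuing claim that ``over $\mathbb{Z}_2$ this coefficient would vanish, so the distinction between the first two forms of (\ref{correspond2}) is an integral phenomenon.'' That is not what the coefficient $2$ records, and the claim is false. In Table \ref{pre_type1_table9}, row $S = x \otimes 1 \otimes 1$, the two (a-3)-terms are the two \emph{distinct} enhanced states $x \otimes 1 \otimes x \otimes 1$ and $x \otimes x \otimes 1 \otimes 1$, i.e.\ the two summands of $a \otimes \Delta(1) \otimes d$ with $\Delta(1) = 1 \otimes x + x \otimes 1$; both match the pattern (a-3) $= x \otimes p \otimes q \otimes 1$, in which $p$ and $q$ are free labels. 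So neither of the two mechanisms you hypothesize (two summands of $\Delta(1)$ collapsing to one state, or two crossings producing the same state) occurs: the ``$2$'' counts two different generators of the chain group, nothing cancels modulo $2$, and the trichotomy (\ref{correspond2}) is identical over $\mathbb{Z}_2$ and over $\mathbb{Z}$. The second alternative of (\ref{correspond2}) likewise reflects absence rather than cancellation: it occurs for labelings such as $x \otimes 1 \otimes x$, where the outer labels $a$, $d$ fit neither the pattern $x \otimes p \otimes q \otimes 1$ of (a-3) nor $1 \otimes p \otimes q \otimes x$ of (a-4), so no (a-$i$) term appears at all. Finally, the sign bookkeeping you flag as the main obstacle is not actually part of this lemma: (\ref{correspond2}) is stated with plain $+$ signs, and the $\oplus$/$\ominus$ decorations in Tables \ref{pre_type1_table9}--\ref{pre_type1_table12} are not $\mathbb{Z}$-signs of $d(S)$ but the markings consumed later by Definition \ref{plus_term} and Proposition \ref{bicomplex_commute}, where the integral sign conventions (Definition \ref{alternative_marker}) are imposed.
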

where the ``other terms'' in (\ref{correspond2}) do not contain the terms (a-3), (a-4), (b-3), or (b-4).
\begin{proof}
Every case of Fig. \ref{pre_type1b} is checked by Tables \ref{pre_type1_table9}--\ref{pre_type1_table12}.  
\end{proof}  
\begin{lemma}\label{lemma_type1_lem3}
Let $d$ be the coboundary operator of the Khovanov homology of Sec. \ref{preliminary}, and let $S$ be an enhanced state of the panel shown of Fig. \ref{pre_type1a}.  If $d(S)$ contains {\rm{(a-3)}} in Fig. \ref{type-figu2}, the following formula is established.  
\begin{equation}\label{corresponde3}
d(S) = {\text{\rm (a-3)}} + {\text{\rm (a-4)}} + {\text{\rm other terms}} 
\end{equation}
where the {\rm{``other terms''}} do not contain {\rm{(b-3)}} or {\rm{(b-4)}}.  
\end{lemma}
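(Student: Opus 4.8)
The plan is to prove the lemma by the same direct computation used for Lemmas \ref{lemma_type1_lem1} and \ref{lemma_type1_lem2}: enumerate the admissible labelings of the enhanced state $S$ depicted in Fig. \ref{pre_type1a}, and compute $d(S)$ crossing by crossing from the Frobenius calculus recorded in Fig. \ref{differential2}. First I would locate the single positive marker of $S$ whose replacement by a negative marker can create the configuration (a-3). Because (a-3) is obtained by splitting one circle into two rather than by merging two circles, this marker must carry the comultiplication $\Delta$, and the circle being split must be labeled $1$; the hypothesis that $d(S)$ contains (a-3) thereby fixes the relevant labels of $S$ along the contracted strand.

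The key observation is that (a-3) and (a-4) are exactly the two summands produced by the single split $\Delta(1) = 1 \otimes x + x \otimes 1$ at that marker: they are the two enhanced states that agree everywhere except that the two newly created circles carry the labels $(1, x)$ in one case and $(x, 1)$ in the other. Both summands occur with the same incidence number in formula (\ref{differential_def}), and, in the $\mathbb{Z}$-coefficient theory of Sec. \ref{z_homology}, they inherit the same orientation of the order of negative markers, since they arise from flipping one and the same marker of $S$. Consequently (a-3) and (a-4) appear in $d(S)$ with equal coefficient, so that whenever (a-3) occurs, (a-4) occurs as well; this yields the first two terms of (\ref{corresponde3}).

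It remains to verify that none of the remaining positive markers of $S$ produces the configurations (b-3) or (b-4). I would check this through the case analysis encoded in the tables, just as in the proofs of the two preceding lemmas: for each of the other markers one reads off whether the local move is a merge $m$ or a split $\Delta$ and compares the resulting labeled state with the fixed patterns (b-3) and (b-4). The geometric reason the comparison always fails is that (b-3) and (b-4) differ from (a-3) and (a-4) in the way the contracted circles are joined across the rectangle of Fig. \ref{cable-fig} (c), a change that is realized only by modifying the distinguished marker and not any of the others.

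The main obstacle I anticipate is the exhaustiveness of this last step combined with the sign bookkeeping over $\mathbb{Z}$. One must confirm that no labeling of the non-contracted circles and no choice among the remaining markers accidentally yields a (b-3)- or (b-4)-type state, and, in the integral theory, that the orientations of the orders of negative markers are arranged so the coefficients of (a-3) and (a-4) reinforce rather than cancel. Both are dispatched by running through the tables, but one must be careful to include the mirror-image configurations flagged in the proof of Proposition \ref{contracted_even}.
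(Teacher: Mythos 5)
Your proof is correct and is essentially the paper's own argument: the paper verifies the lemma by the exhaustive Frobenius-calculus check recorded in Tables \ref{pre_type1_table13}--\ref{pre_type1_table16}, whose rows display exactly your key observation --- (a-3) and (a-4) are the two summands $1 \otimes x$ and $x \otimes 1$ of $\Delta(1)$ applied at one and the same marker, hence occur together with equal coefficient --- and it excludes (b-3) and (b-4) by appealing to the types of the markers of $S$ in Fig. \ref{pre_type1a}. One caution: your stated geometric reason for that exclusion is inaccurate, since the smoothing patterns of (b-3) and (b-4) each differ from that of $S$ at three of the four local crossings (not only at the distinguished marker, and indeed (a-3) and (b-3) differ at crossings \emph{other} than the distinguished one), so the true reason is simply that $d$ flips exactly one marker and therefore cannot reach them; this does not affect correctness, because the exhaustive marker-by-marker comparison you fall back on is what actually completes the proof, and it succeeds.
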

\begin{proof}
Every case in Fig. \ref{pre_type1a} is verified by Tables \ref{pre_type1_table13}--\ref{pre_type1_table16}.  The latter part of the claim is given by the types of markers of $S$ in Fig. \ref{pre_type1a}.  
\end{proof}
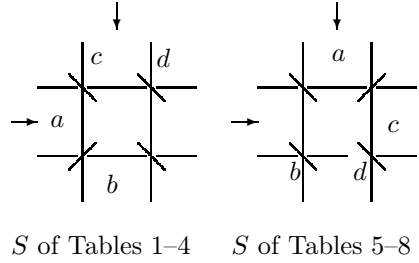
\begin{figure}
\begin{center}
\begin{minipage}{60pt}
\begin{picture}(60,100)
\qbezier(12,32)(17,37)(22,42)
\qbezier(12,68)(17,63)(22,58)
\qbezier(48,32)(43,37)(38,42)
\qbezier(38,68)(43,63)(48,58)
\put(5,48){$a$}
\put(20,72){$c$}
\put(26,23){$b$}
\put(45,71){$d$}
\put(17,20){\line(0,1){60}}
\put(43,20){\line(0,1){60}}
\put(0,37){\line(1,0){15}}
\put(0,63){\line(1,0){15}}
\put(19,37){\line(1,0){22}}
\put(19,63){\line(1,0){22}}
\put(45,37){\line(1,0){15}}
\put(45,63){\line(1,0){15}}
\put(-10,0){$S$ of Tables \ref{pre_type1_table1}--\ref{pre_type1_table4}}
{\linethickness{0.4pt}
\put(-10,50){\vector(1,0){10}}
\put(30,95){\vector(0,-1){10}}
}
\end{picture}
\end{minipage}
\qquad
\begin{minipage}{60pt}
\begin{picture}(60,100)
\qbezier(12,42)(17,37)(22,32)
\qbezier(12,68)(17,63)(22,58)
\qbezier(48,32)(43,37)(38,42)
\qbezier(38,58)(43,63)(48,68)
\put(12,28){$b$}
\put(28,73){$a$}
\put(49,46){$c$}
\put(36,28){$d$}
\put(17,20){\line(0,1){60}}
\put(43,20){\line(0,1){60}}
\put(0,37){\line(1,0){15}}
\put(0,63){\line(1,0){15}}
\put(19,37){\line(1,0){15}}
\put(19,63){\line(1,0){22}}
\put(19,63){\line(1,0){22}}
\put(45,37){\line(1,0){15}}
\put(45,63){\line(1,0){15}}
\put(-10,0){$S$ of Tables \ref{pre_type1_table5}--\ref{pre_type1_table8}}
{\linethickness{0.4pt}
\put(-10,50){\vector(1,0){10}}
\put(30,95){\vector(0,-1){10}}
}
\end{picture}
\end{minipage}
\end{center}
\caption{Enhanced states $S$ (characterized by the local part), in order from the left, $S$ of Tables \ref{pre_type1_table1}--\ref{pre_type1_table4} and $S$ of Tables \ref{pre_type1_table5}--\ref{pre_type1_table8}.}\label{pre_type1}
\end{figure}
\begin{figure}[htbp]
\centering
\begin{minipage}{60pt}
\begin{picture}(60,100)
\qbezier(12,32)(17,37)(22,42)
\qbezier(12,68)(17,63)(22,58)
\qbezier(48,32)(43,37)(38,42)
\qbezier(38,58)(43,63)(48,68)
\put(5,48){$p$}
\put(28,73){$q$}
\put(49,46){$x$}
\put(26,23){$1$}
\put(17,20){\line(0,1){60}}
\put(43,20){\line(0,1){60}}
\put(0,37){\line(1,0){15}}
\put(0,63){\line(1,0){15}}
\put(19,37){\line(1,0){22}}
\put(19,63){\line(1,0){22}}
\put(45,37){\line(1,0){15}}
\put(45,63){\line(1,0){15}}
\put(30,20){\oval(26,6)[b]}
\put(60,50){\oval(6,26)[r]}
\put(23,0){(a-1)}
{\linethickness{0.4pt}
\put(-10,50){\vector(1,0){10}}
\put(30,95){\vector(0,-1){10}}}
\end{picture}
\end{minipage}
\qquad
\begin{minipage}{60pt}
\begin{picture}(60,100)
\qbezier(12,32)(17,37)(22,42)
\qbezier(12,68)(17,63)(22,58)
\qbezier(48,32)(43,37)(38,42)
\qbezier(38,58)(43,63)(48,68)
\put(5,48){$p$}
\put(28,73){$q$}
\put(49,46){$1$}
\put(26,23){$x$}
\put(17,20){\line(0,1){60}}
\put(43,20){\line(0,1){60}}
\put(0,37){\line(1,0){15}}
\put(0,63){\line(1,0){15}}
\put(19,37){\line(1,0){22}}
\put(19,63){\line(1,0){22}}
\put(45,37){\line(1,0){15}}
\put(45,63){\line(1,0){15}}
\put(30,20){\oval(26,6)[b]}
\put(60,50){\oval(6,26)[r]}
\put(23,0){(a-2)}
\end{picture}
\end{minipage}
\qquad
\begin{minipage}{60pt}
\begin{picture}(60,100)
\qbezier(12,32)(17,37)(22,42)
\qbezier(12,68)(17,63)(22,58)
\qbezier(38,32)(43,37)(48,42)
\qbezier(38,68)(43,63)(48,58)
\put(5,48){$p$}
\put(45,73){$q$}
\put(19,73){$r$}
\put(17,20){\line(0,1){60}}
\put(43,20){\line(0,1){60}}
\put(0,37){\line(1,0){15}}
\put(0,63){\line(1,0){15}}
\put(19,37){\line(1,0){22}}
\put(19,63){\line(1,0){22}}
\put(45,37){\line(1,0){15}}
\put(45,63){\line(1,0){15}}
\put(23,0){(b-1)}
\end{picture}
\end{minipage}
\qquad
\begin{minipage}{60pt}
\begin{picture}(60,100)
\qbezier(12,42)(17,37)(22,32)
\qbezier(12,68)(17,63)(22,58)
\qbezier(38,32)(43,37)(48,42)
\qbezier(38,58)(43,63)(48,68)
\put(0,48){$p$}
\put(28,73){$q$}
\put(17,20){\line(0,1){60}}
\put(43,20){\line(0,1){60}}
\put(0,37){\line(1,0){15}}
\put(0,63){\line(1,0){15}}
\put(19,37){\line(1,0){22}}
\put(19,63){\line(1,0){22}}
\put(45,37){\line(1,0){15}}
\put(45,63){\line(1,0){15}}
\put(23,0){(b-2)}
\end{picture}
\end{minipage}
\caption{(a-1), (a-2): Type $1_{j}$.  (b-1), (b-2): Type $2_{j}$.  }\label{type-figu1}
\end{figure}
\begin{figure}[htbp]
\begin{center}
\begin{minipage}{60pt}
\begin{picture}(60,100)
\qbezier(12,32)(17,37)(22,42)
\qbezier(12,58)(17,63)(22,68)
\qbezier(48,32)(43,37)(38,42)
\qbezier(38,58)(43,63)(48,68)
\put(10,68){$c$}
\put(35,68){$b$}
\put(49,46){$a$}
\put(26,23){$d$}
\put(0,37){\line(1,0){60}}
\put(0,63){\line(1,0){60}}
\put(17,20){\line(0,1){15}}
\put(43,20){\line(0,1){15}}
\put(17,39){\line(0,1){22}}
\put(43,39){\line(0,1){22}}
\put(17,65){\line(0,1){15}}
\put(43,65){\line(0,1){15}}
\put(-10,0){$S$ of Tables \ref{pre_type1_table9}--\ref{pre_type1_table12}}
{\linethickness{0.4pt}
\put(-10,50){\vector(1,0){10}}
\put(30,95){\vector(0,-1){10}}}
\end{picture}
\end{minipage}
\end{center}
\caption{Enhanced states $S$ (characterized by the local part) of Tables \ref{pre_type1_table9}--\ref{pre_type1_table12}.}\label{pre_type1b}
\end{figure}
\begin{figure}[hbtp]
\begin{center}
\begin{minipage}{60pt}
\begin{picture}(60,100)
\qbezier(12,32)(17,37)(22,42)
\qbezier(12,68)(17,63)(22,58)
\qbezier(38,32)(43,37)(48,42)
\qbezier(38,58)(43,63)(48,68)
\put(5,48){$a$}
\put(28,73){$c$}
\put(26,23){$b$}
\put(0,37){\line(1,0){60}}
\put(0,63){\line(1,0){60}}
\put(17,20){\line(0,1){15}}
\put(43,20){\line(0,1){15}}
\put(17,39){\line(0,1){22}}
\put(43,39){\line(0,1){22}}
\put(17,65){\line(0,1){15}}
\put(43,65){\line(0,1){15}}
\put(-10,0){$S$ of Tables \ref{pre_type1_table13}--\ref{pre_type1_table16}}
{\linethickness{0.4pt}
\put(-10,50){\vector(1,0){10}}
\put(30,95){\vector(0,-1){10}}
}
\end{picture}
\end{minipage}
\end{center}
\caption{Enhanced states $S$ (characterized by the local part) of Tables \ref{pre_type1_table13}--\ref{pre_type1_table16}.}\label{pre_type1a}
\end{figure}

\begin{figure}[htbp]
\centering
\begin{minipage}{60pt}
\begin{picture}(60,100)
\qbezier(12,32)(17,37)(22,42)
\qbezier(12,68)(17,63)(22,58)
\qbezier(48,32)(43,37)(38,42)
\qbezier(38,58)(43,63)(48,68)
\put(5,48){$p$}
\put(28,73){$q$}
\put(49,46){$x$}
\put(26,23){$1$}
\put(0,37){\line(1,0){60}}
\put(0,63){\line(1,0){60}}
\put(17,20){\line(0,1){15}}
\put(43,20){\line(0,1){15}}
\put(17,39){\line(0,1){22}}
\put(43,39){\line(0,1){22}}
\put(17,65){\line(0,1){15}}
\put(43,65){\line(0,1){15}}
\put(30,20){\oval(26,6)[b]}
\put(60,50){\oval(6,26)[r]}
\put(23,0){(a-3)}
{\linethickness{0.4pt}
\put(-10,50){\vector(1,0){10}}
\put(30,95){\vector(0,-1){10}}}
\end{picture}
\end{minipage}
\qquad
\begin{minipage}{60pt}
\begin{picture}(60,100)
\qbezier(12,32)(17,37)(22,42)
\qbezier(12,68)(17,63)(22,58)
\qbezier(48,32)(43,37)(38,42)
\qbezier(38,58)(43,63)(48,68)
\put(5,48){$p$}
\put(28,73){$q$}
\put(49,46){$1$}
\put(26,23){$x$}
\put(0,37){\line(1,0){60}}
\put(0,63){\line(1,0){60}}
\put(17,20){\line(0,1){15}}
\put(43,20){\line(0,1){15}}
\put(17,39){\line(0,1){15}}
\put(43,39){\line(0,1){15}}
\put(17,65){\line(0,1){15}}
\put(43,65){\line(0,1){15}}
\put(30,20){\oval(26,6)[b]}
\put(60,50){\oval(6,26)[r]}
\put(23,0){(a-4)}
{\linethickness{0.4pt}
\put(-10,50){\vector(1,0){10}}
\put(30,95){\vector(0,-1){10}}
}
\end{picture}
\end{minipage}
\qquad
\begin{minipage}{60pt}
\begin{picture}(60,100)
\qbezier(12,32)(17,37)(22,42)
\qbezier(12,58)(17,63)(22,68)
\qbezier(48,32)(43,37)(38,42)
\qbezier(38,68)(43,63)(48,58)
\put(5,48){$p$}
\put(26,23){$q$}
\put(0,37){\line(1,0){60}}
\put(0,63){\line(1,0){60}}
\put(17,20){\line(0,1){15}}
\put(43,20){\line(0,1){15}}
\put(17,39){\line(0,1){22}}
\put(43,39){\line(0,1){22}}
\put(17,65){\line(0,1){15}}
\put(43,65){\line(0,1){15}}
\put(23,0){(b-3)}
\end{picture}
\end{minipage}
\qquad
\begin{minipage}{60pt}
\begin{picture}(60,100)
\qbezier(12,42)(17,37)(22,32)
\qbezier(12,58)(17,63)(22,68)
\qbezier(48,32)(43,37)(38,42)
\qbezier(48,68)(43,63)(38,58)
\put(10,67){$p$}
\put(51,48){$q$}
\put(10,23){$r$}
\put(0,37){\line(1,0){60}}
\put(0,63){\line(1,0){60}}
\put(17,20){\line(0,1){15}}
\put(43,20){\line(0,1){15}}
\put(17,39){\line(0,1){22}}
\put(43,39){\line(0,1){22}}
\put(17,65){\line(0,1){15}}
\put(43,65){\line(0,1){15}}
\put(23,0){(b-4)}
\end{picture}
\end{minipage}
\caption{(a-3), (a-4): Type $1_{j}$.  (b-3), (b-4): Type $2_{j}$.  }\label{type-figu2}
\end{figure}

\begin{figure}[htbp]
\centering
\begin{minipage}{60pt}
\begin{picture}(60,80)
\put(0,40){\line(1,0){14}}
\put(18,40){\line(1,0){24}}
\put(46,40){\line(1,0){14}}
\put(44,20){\line(0,1){40}}
\put(16,20){\line(0,1){40}}
\qbezier(36,32)(44,40)(52,48)
\qbezier(8,48)(16,40)(24,32)
\put(29,50){$p$}
\put(49,30){$q$}
\qbezier(16,19)(16,18)(16,17)
\qbezier(16,16)(16,15)(16,14)
\qbezier(16,13)(16,12)(16,11)
\qbezier(44,19)(44,18)(44,17)
\qbezier(44,16)(44,15)(44,14)
\qbezier(44,13)(44,12)(44,11)
\put(30,10){\oval(28,6)[b]}
\qbezier(16,60.8)(16,61.3)(16,62)
\qbezier(16,62)(16,62.5)(16.5,63.2)
\qbezier(17,64)(18,65)(19,65)
\qbezier(20.3,65)(21.3,65)(22.3,65)
\qbezier(23.8,65)(24.8,65)(25.8,65)
\qbezier(27.6,65)(28.6,65)(29.6,65)
\qbezier(31,65)(32,65)(33,65)
\qbezier(34.3,65)(35.3,65)(36.3,65)
\qbezier(38,65)(39,65)(40,65)
\qbezier(41.5,64.8)(42,64.8)(43,64)
\qbezier(43.6,63)(43.8,63.2)(44,61)
\put(23,-5){(a-5)}
\qbezier(0,40)(-0.5,40)(-1,43)
\qbezier(-1,45)(-1,46)(-1,47)
\qbezier(-1,49)(-1,50)(-1,51)
\qbezier(-1,53)(-1,54)(-1,55)
\qbezier(-0.7,57)(0,59)(0.8,61)
\qbezier(1.8,63)(2.3,64)(3.3,65)
\qbezier(5,67)(6,68)(7.5,69)
\qbezier(9.5,70.2)(10.5,70.8)(11.5,71.1)
\qbezier(13.5,71.8)(14.5,72)(15.5,72.2)
\qbezier(17.5,72.5)(18.5,72.5)(19.5,72.5)
\qbezier(21.5,72.5)(22.5,72.5)(23.5,72.5)
\qbezier(25.5,72.5)(26.5,72.5)(27.5,72.5)
\qbezier(29.5,72.5)(30.5,72.5)(31.5,72.5)
\qbezier(33.5,72.5)(34.5,72.5)(35.5,72.5)
\qbezier(37.5,72.5)(38.5,72.5)(39.5,72.5)
\qbezier(41.5,72.5)(42.5,72.5)(43.5,72.5)
\qbezier(45.5,72)(46.5,71.8)(47.5,71.4)
\qbezier(48.5,71)(49.5,70.6)(50.5,70.1)
\qbezier(52.5,68.8)(53.5,68.3)(54.5,67.2)
\qbezier(56.5,65.2)(57.5,64)(58.5,62.3)
\qbezier(59.4,60)(60,59)(60.2,58)
\qbezier(60.6,56)(60.9,55)(60.8,54)
\qbezier(61,52)(61,51)(61,50)
\qbezier(61,48)(61,46.5)(61,45)
\qbezier(61,43)(61,40)(60,40)
{\linethickness{0.4pt}
\put(30,20){\vector(0,-1){10}}}
\end{picture}
\end{minipage}
\quad
\begin{minipage}{60pt}
\begin{picture}(60,80)
\put(0,40){\line(1,0){14}}
\put(18,40){\line(1,0){24}}
\put(46,40){\line(1,0){14}}
\put(44,20){\line(0,1){40}}
\put(16,20){\line(0,1){40}}
\qbezier(8,32)(16,40)(24,48)
\qbezier(36,48)(44,40)(52,32)
\put(27,25){$q'$}
\qbezier(16,19)(16,18)(16,17)
\qbezier(16,16)(16,15)(16,14)
\qbezier(16,13)(16,12)(16,11)
\qbezier(44,19)(44,18)(44,17)
\qbezier(44,16)(44,15)(44,14)
\qbezier(44,13)(44,12)(44,11)
\put(30,10){\oval(28,6)[b]}
\qbezier(0,40)(-0.5,40)(-1,43)
\qbezier(-1,45)(-1,46)(-1,47)
\qbezier(-1,49)(-1,50)(-1,51)
\qbezier(-1,53)(-1,54)(-1,55)
\qbezier(-0.7,57)(0,59)(0.8,61)
\qbezier(1.8,63)(2.3,64)(3.3,65)
\qbezier(5,67)(6,68)(7.5,69)
\qbezier(9.5,70.2)(10.5,70.8)(11.5,71.1)
\qbezier(13.5,71.8)(14.5,72)(15.5,72.2)
\qbezier(17.5,72.5)(18.5,72.5)(19.5,72.5)
\qbezier(21.5,72.5)(22.5,72.5)(23.5,72.5)
\qbezier(25.5,72.5)(26.5,72.5)(27.5,72.5)
\qbezier(29.5,72.5)(30.5,72.5)(31.5,72.5)
\qbezier(33.5,72.5)(34.5,72.5)(35.5,72.5)
\qbezier(37.5,72.5)(38.5,72.5)(39.5,72.5)
\qbezier(41.5,72.5)(42.5,72.5)(43.5,72.5)
\qbezier(45.5,72)(46.5,71.8)(47.5,71.4)
\qbezier(48.5,71)(49.5,70.6)(50.5,70.1)
\qbezier(52.5,68.8)(53.5,68.3)(54.5,67.2)
\qbezier(56.5,65.2)(57.5,64)(58.5,62.3)
\qbezier(59.4,60)(60,59)(60.2,58)
\qbezier(60.6,56)(60.9,55)(60.8,54)
\qbezier(61,52)(61,51)(61,50)
\qbezier(61,48)(61,46.5)(61,45)
\qbezier(61,43)(61,40)(60,40)
\put(23,-5){(b-5)}
\qbezier(16,60.8)(16,61.3)(16,62)
\qbezier(16,62)(16,62.5)(16.5,63.2)
\qbezier(17,64)(18,65)(19,65)
\qbezier(20.3,65)(21.3,65)(22.3,65)
\qbezier(23.8,65)(24.8,65)(25.8,65)
\qbezier(27.6,65)(28.6,65)(29.6,65)
\qbezier(31,65)(32,65)(33,65)
\qbezier(34.3,65)(35.3,65)(36.3,65)
\qbezier(38,65)(39,65)(40,65)
\qbezier(41.5,64.8)(42,64.8)(43,64)
\qbezier(43.6,63)(43.8,63.2)(44,61)
\put(49,50){$p'$}
{\linethickness{0.4pt}
\put(30,20){\vector(0,-1){10}}}
\end{picture}
\end{minipage}
\qquad
\begin{minipage}{60pt}
\begin{picture}(60,80)
\put(0,40){\line(1,0){14}}
\put(18,40){\line(1,0){24}}
\put(46,40){\line(1,0){14}}
\put(44,20){\line(0,1){40}}
\put(16,20){\line(0,1){40}}
\qbezier(36,32)(44,40)(52,48)
\qbezier(8,48)(16,40)(24,32)
\put(27,25){$p$}
\qbezier(16,19)(16,18)(16,17)
\qbezier(16,16)(16,15)(16,14)
\qbezier(16,13)(16,12)(16,11)
\qbezier(44,19)(44,18)(44,17)
\qbezier(44,16)(44,15)(44,14)
\qbezier(44,13)(44,12)(44,11)
\put(30,10){\oval(28,6)[b]}
\qbezier(0.5,40)(-0.8,40)(-1,42)
\qbezier(-0.8,44)(-0.7,45)(-0.3,46)
\qbezier(0.2,48)(0.5,49)(1.2,50)
\qbezier(2.2,52)(2.8,53)(3.8,54)
\qbezier(6,56.1)(7,57.2)(8,57.6)
\qbezier(10,58.8)(11,59.3)(12,59.8)
\qbezier(14,60.3)(16,60.5)(16,59.5)
\qbezier(44,59)(44,61)(45,60.5)
\qbezier(46,60.2)(47,60)(48,59.6)
\qbezier(50,58.8)(51,58.5)(52,57.6)
\qbezier(54,56.1)(55,55.5)(56.5,53.6)
\qbezier(58,51.5)(59,50)(59.5,48.3)
\qbezier(60.4,46)(60.7,45)(60.8,44)
\qbezier(61,42)(61,40)(59.8,40)
\put(23,-5){(a-6)}
{\linethickness{0.4pt}
\put(30,20){\vector(0,-1){10}}}
\end{picture}
\end{minipage}
\quad
\begin{minipage}{60pt}
\begin{picture}(60,80)
\put(0,40){\line(1,0){14}}
\put(18,40){\line(1,0){24}}
\put(46,40){\line(1,0){14}}
\put(44,20){\line(0,1){40}}
\put(16,20){\line(0,1){40}}
\qbezier(8,32)(16,40)(24,48)
\qbezier(36,48)(44,40)(52,32)
\put(26,25){$q'$}
\qbezier(16,19)(16,18)(16,17)
\qbezier(16,16)(16,15)(16,14)
\qbezier(16,13)(16,12)(16,11)
\qbezier(44,19)(44,18)(44,17)
\qbezier(44,16)(44,15)(44,14)
\qbezier(44,13)(44,12)(44,11)
\put(30,10){\oval(28,6)[b]}
\qbezier(0.5,40)(-0.8,40)(-1,42)
\qbezier(-0.8,44)(-0.7,45)(-0.3,46)
\qbezier(0.2,48)(0.5,49)(1.2,50)
\qbezier(2.2,52)(2.8,53)(3.8,54)
\qbezier(6,56.1)(7,57.2)(8,57.6)
\qbezier(10,58.8)(11,59.3)(12,59.8)
\qbezier(14,60.3)(16,60.5)(16,59.5)
\qbezier(44,59)(44,61)(45,60.5)
\qbezier(46,60.2)(47,60)(48,59.6)
\qbezier(50,58.8)(51,58.5)(52,57.6)
\qbezier(54,56.1)(55,55.5)(56.5,53.6)
\qbezier(58,51.5)(59,50)(59.5,48.3)
\qbezier(60.4,46)(60.7,45)(60.8,44)
\qbezier(61,42)(61,40)(59.8,40)
\put(23,-5){(b-6)}
{\linethickness{0.4pt}
\put(30,20){\vector(0,-1){10}}}
\put(5,48){$p'$}
\put(45,48){$r'$}
\end{picture}
\end{minipage}\\
\begin{minipage}{60pt}
\begin{picture}(60,100)
\put(0,40){\line(1,0){60}}
\put(16,20){\line(0,1){18}}
\put(44,20){\line(0,1){18}}
\put(16,42){\line(0,1){18}}
\put(44,42){\line(0,1){18}}
\qbezier(36,32)(44,40)(52,48)
\qbezier(8,48)(16,40)(24,32)
\put(29,50){$q$}
\put(49,30){$p$}
\qbezier(16,19)(16,18)(16,17)
\qbezier(16,16)(16,15)(16,14)
\qbezier(16,13)(16,12)(16,11)
\qbezier(44,19)(44,18)(44,17)
\qbezier(44,16)(44,15)(44,14)
\qbezier(44,13)(44,12)(44,11)
\put(30,10){\oval(28,6)[b]}
\qbezier(16,60.8)(16,61.3)(16,62)
\qbezier(16,62)(16,62.5)(16.5,63.2)
\qbezier(17,64)(18,65)(19,65)
\qbezier(20.3,65)(21.3,65)(22.3,65)
\qbezier(23.8,65)(24.8,65)(25.8,65)
\qbezier(27.6,65)(28.6,65)(29.6,65)
\qbezier(31,65)(32,65)(33,65)
\qbezier(34.3,65)(35.3,65)(36.3,65)
\qbezier(38,65)(39,65)(40,65)
\qbezier(41.5,64.8)(42,64.8)(43,64)
\qbezier(43.6,63)(43.8,63.2)(44,61)
\put(23,-5){(a-7)}
\qbezier(0,40)(-0.5,40)(-1,43)
\qbezier(-1,45)(-1,46)(-1,47)
\qbezier(-1,49)(-1,50)(-1,51)
\qbezier(-1,53)(-1,54)(-1,55)
\qbezier(-0.7,57)(0,59)(0.8,61)
\qbezier(1.8,63)(2.3,64)(3.3,65)
\qbezier(5,67)(6,68)(7.5,69)
\qbezier(9.5,70.2)(10.5,70.8)(11.5,71.1)
\qbezier(13.5,71.8)(14.5,72)(15.5,72.2)
\qbezier(17.5,72.5)(18.5,72.5)(19.5,72.5)
\qbezier(21.5,72.5)(22.5,72.5)(23.5,72.5)
\qbezier(25.5,72.5)(26.5,72.5)(27.5,72.5)
\qbezier(29.5,72.5)(30.5,72.5)(31.5,72.5)
\qbezier(33.5,72.5)(34.5,72.5)(35.5,72.5)
\qbezier(37.5,72.5)(38.5,72.5)(39.5,72.5)
\qbezier(41.5,72.5)(42.5,72.5)(43.5,72.5)
\qbezier(45.5,72)(46.5,71.8)(47.5,71.4)
\qbezier(48.5,71)(49.5,70.6)(50.5,70.1)
\qbezier(52.5,68.8)(53.5,68.3)(54.5,67.2)
\qbezier(56.5,65.2)(57.5,64)(58.5,62.3)
\qbezier(59.4,60)(60,59)(60.2,58)
\qbezier(60.6,56)(60.9,55)(60.8,54)
\qbezier(61,52)(61,51)(61,50)
\qbezier(61,48)(61,46.5)(61,45)
\qbezier(61,43)(61,40)(60,40)
{\linethickness{0.4pt}
\put(30,20){\vector(0,-1){10}}
}
\end{picture}
\end{minipage}
\quad
\begin{minipage}{60pt}
\begin{picture}(60,100)
\put(0,40){\line(1,0){60}}
\put(16,20){\line(0,1){18}}
\put(44,20){\line(0,1){18}}
\put(16,42){\line(0,1){18}}
\put(44,42){\line(0,1){18}}
\qbezier(8,32)(16,40)(24,48)
\qbezier(36,48)(44,40)(52,32)
\put(27,25){$q'$}
\qbezier(16,19)(16,18)(16,17)
\qbezier(16,16)(16,15)(16,14)
\qbezier(16,13)(16,12)(16,11)
\qbezier(44,19)(44,18)(44,17)
\qbezier(44,16)(44,15)(44,14)
\qbezier(44,13)(44,12)(44,11)
\put(30,10){\oval(28,6)[b]}
\qbezier(0,40)(-0.5,40)(-1,43)
\qbezier(-1,45)(-1,46)(-1,47)
\qbezier(-1,49)(-1,50)(-1,51)
\qbezier(-1,53)(-1,54)(-1,55)
\qbezier(-0.7,57)(0,59)(0.8,61)
\qbezier(1.8,63)(2.3,64)(3.3,65)
\qbezier(5,67)(6,68)(7.5,69)
\qbezier(9.5,70.2)(10.5,70.8)(11.5,71.1)
\qbezier(13.5,71.8)(14.5,72)(15.5,72.2)
\qbezier(17.5,72.5)(18.5,72.5)(19.5,72.5)
\qbezier(21.5,72.5)(22.5,72.5)(23.5,72.5)
\qbezier(25.5,72.5)(26.5,72.5)(27.5,72.5)
\qbezier(29.5,72.5)(30.5,72.5)(31.5,72.5)
\qbezier(33.5,72.5)(34.5,72.5)(35.5,72.5)
\qbezier(37.5,72.5)(38.5,72.5)(39.5,72.5)
\qbezier(41.5,72.5)(42.5,72.5)(43.5,72.5)
\qbezier(45.5,72)(46.5,71.8)(47.5,71.4)
\qbezier(48.5,71)(49.5,70.6)(50.5,70.1)
\qbezier(52.5,68.8)(53.5,68.3)(54.5,67.2)
\qbezier(56.5,65.2)(57.5,64)(58.5,62.3)
\qbezier(59.4,60)(60,59)(60.2,58)
\qbezier(60.6,56)(60.9,55)(60.8,54)
\qbezier(61,52)(61,51)(61,50)
\qbezier(61,48)(61,46.5)(61,45)
\qbezier(61,43)(61,40)(61,40)
\put(23,-5){(b-7)}
\qbezier(16,60.8)(16,61.3)(16,62)
\qbezier(16,62)(16,62.5)(16.5,63.2)
\qbezier(17,64)(18,65)(19,65)
\qbezier(20.3,65)(21.3,65)(22.3,65)
\qbezier(23.8,65)(24.8,65)(25.8,65)
\qbezier(27.6,65)(28.6,65)(29.6,65)
\qbezier(31,65)(32,65)(33,65)
\qbezier(34.3,65)(35.3,65)(36.3,65)
\qbezier(38,65)(39,65)(40,65)
\qbezier(41.5,64.8)(42,64.8)(43,64)
\qbezier(43.6,63)(44,63.2)(44,61)
\put(49,50){$p'$}
{\linethickness{0.4pt}
\put(30,20){\vector(0,-1){10}}
}
\end{picture}
\end{minipage}
\quad
\begin{minipage}{60pt}
\begin{picture}(60,100)
\put(0,40){\line(1,0){60}}
\put(16,20){\line(0,1){18}}
\put(44,20){\line(0,1){18}}
\put(16,42){\line(0,1){18}}
\put(44,42){\line(0,1){18}}
\qbezier(36,32)(44,40)(52,48)
\qbezier(8,48)(16,40)(24,32)
\put(27,25){$p$}
\qbezier(16,19)(16,18)(16,17)
\qbezier(16,16)(16,15)(16,14)
\qbezier(16,13)(16,12)(16,11)
\qbezier(44,19)(44,18)(44,17)
\qbezier(44,16)(44,15)(44,14)
\qbezier(44,13)(44,12)(44,11)
\put(30,10){\oval(28,6)[b]}
\qbezier(0.5,40)(-0.8,40)(-1,42)
\qbezier(-0.8,44)(-0.7,45)(-0.3,46)
\qbezier(0.2,48)(0.5,49)(1.2,50)
\qbezier(2.2,52)(2.8,53)(3.8,54)
\qbezier(6,56.1)(7,57.2)(8,57.6)
\qbezier(10,58.8)(11,59.3)(12,59.8)
\qbezier(14,60.3)(16,60.5)(16,59.5)
\qbezier(44,59)(44,61)(45,60.5)
\qbezier(46,60.5)(47,60)(48,59.6)
\qbezier(50,58.8)(51,58.5)(52,57.6)
\qbezier(54,56.1)(55,55.5)(56.5,53.6)
\qbezier(58,51.5)(59,50)(59.5,48.3)
\qbezier(60.4,46)(60.7,45)(60.8,44)
\qbezier(61,42)(61,40)(59.8,40)
\put(23,-5){(a-8)}
{\linethickness{0.4pt}
\put(30,20){\vector(0,-1){10}}
}
\end{picture}
\end{minipage}
\quad
\begin{minipage}{60pt}
\begin{picture}(60,100)
\put(0,40){\line(1,0){60}}
\put(16,20){\line(0,1){18}}
\put(44,20){\line(0,1){18}}
\put(16,42){\line(0,1){18}}
\put(44,42){\line(0,1){18}}
\qbezier(8,32)(16,40)(24,48)
\qbezier(36,48)(44,40)(52,32)
\put(26,25){$q'$}
\qbezier(16,19)(16,18)(16,17)
\qbezier(16,16)(16,15)(16,14)
\qbezier(16,13)(16,12)(16,11)
\qbezier(44,19)(44,18)(44,17)
\qbezier(44,16)(44,15)(44,14)
\qbezier(44,13)(44,12)(44,11)
\put(30,10){\oval(28,6)[b]}
\qbezier(0.5,40)(-0.8,40)(-1,42)
\qbezier(-0.8,44)(-0.7,45)(-0.3,46)
\qbezier(0.2,48)(0.5,49)(1.2,50)
\qbezier(2.2,52)(2.8,53)(3.8,54)
\qbezier(6,56.1)(7,57.2)(8,57.6)
\qbezier(10,58.8)(11,59.3)(12,59.8)
\qbezier(14,60.3)(16,60.5)(16,59.5)
\qbezier(44,59)(44,61)(45,60.5)
\qbezier(46,60.2)(47,60)(48,59.6)
\qbezier(50,58.8)(51,58.5)(52,57.6)
\qbezier(54,56.1)(55,55.5)(56.5,53.6)
\qbezier(58,51.5)(59,50)(59.5,48.3)
\qbezier(60.4,46)(60.7,45)(60.8,44)
\qbezier(61,42)(61,40)(59.8,40)
\put(23,-5){(b-8)}
{\linethickness{0.4pt}
\put(30,20){\vector(0,-1){10}}}
\put(5,48){$r'$}
\put(47,48){$p'$}
\end{picture}
\end{minipage}
\caption{Enhanced states related to contracted strands.  The arrows indicate the orientation of the contracted strands.  In these figures, $p$, $q$, $r$, $p'$, $q'$, and $r'$ $=$ $x$ or $1$.  (a-5) or (a-7): A part of Type $1_{j}$ $p \otimes q$.  (b-5) ((b-7)): The part of Type $2_{j}$ corresponding to (a-5) ((a-7)).  (a-6) or (a-8): A part of Type $1_{j}$ generating only one circles.  (b-6) ((b-8)): The part of Type $2_{j}$ corresponding to (a-6) ((a-8)).}\label{type12}
\end{figure}
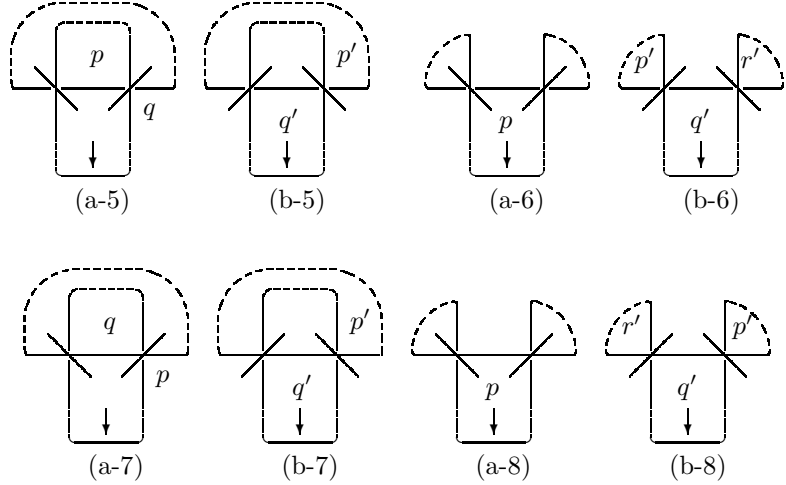

Now, we provide definitions for Types $1_j$ and $2_j$.  
\begin{definition}\label{def_type1_and_2}
Let us fix the tuple of nonnegative integers $\textbf{n}$ corresponding to the colored Jones polynomial $J_{\textbf{n}}$.  Consider the enhanced states of Type $1_j$ in the link diagram $D^{\textbf{n} - 2 \textbf{k}}$.  Using Tables \ref{pre_type1_table1}--\ref{pre_type1_table8}, we locally replace exactly one arbitrary (a-1) or (a-2) panel in Fig. \ref{type-figu1} with (b-1) or (b-2) placed on the same line in each table.  The enhanced state given by this replacement is called Type $2_j$.  Similarly, using Tables \ref{pre_type1_table9}--\ref{pre_type1_table12}, we can locally replace an (a-3) or (a-4) panel in Fig. \ref{type-figu2} with a (b-3) or (b-4) panel to give the Type $2_j$ enhanced state.  In this case, (b-3) and (b-4) in the eighth and ninth lines of Table \ref{pre_type1_table9} are defined by the third--sixth lines of the same table.  Using Fig. \ref{type12}, locally replacing (a-$i$) with (b-$i$) at one place for every $i$ $\in$ $\{5, 6, 7, 8\}$ gives us the Type $2_j$ the enhanced state.  
\end{definition}

\section{Operators on abelian groups of graphs of cables of a link diagram}
\subsection{Case of knots and the coefficient $\mathbb{Z}_2$}\label{coeff_z_2}
To understand the concept of construction of a Khovanov complex of the colored Jones polynomial, we first consider the case of knots and $\mathbb{Z}_2$.  From (\ref{colored_jones}), the colored Jones polynomial $J_{n}$ of a knot diagram $D$ is written as
\begin{equation}
J_{n}(D) = \sum_{k = 0}^{\lfloor n/2 \rfloor} (-1)^{k} \begin{pmatrix} n-k \\ k \end{pmatrix} J(D^{n - 2k}).  
\end{equation}
The binomial coefficient $\begin{pmatrix} n-k \\ k \end{pmatrix}$ is the number of ways in which $k$ pairs of neighbors can be selected from $n$ dots placed on a vertical line, where each dot appears in at most one pair.  Fig. \ref{cable_ex_knot} shows an example for $n$ $=$ $4$.   In Fig. \ref{cable_ex_knot}, (a) corresponds to the binomial $\begin{pmatrix} 4 - 0 \\ 0 \end{pmatrix}$ $=$ $1$; (b), (c), and (d) correspond to $\begin{pmatrix} 4-1 \\ 1 \end{pmatrix}$ $=$ $3$; and (e) corresponds to $\begin{pmatrix} 4-2 \\ 2 \end{pmatrix}$ $=$ $1$.  
\begin{figure}[h!]
\begin{center}
\qquad\qquad\qquad\quad
\includegraphics[width=8cm, bb=0 0 253 73]{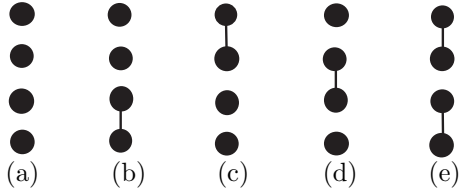}
\caption{Graphs correspond to binomials in the case $n$ $=$ $4$.  (a): $k$ $=$ $0$; (b), (c), (d): $k$ $=$ $1$; and (e): $k$ $=$ $2$.  }\label{cable_ex_knot}
\begin{picture}(0,0)
\put(-80,37){(a)}
\put(-40,37){(b)}
\put(0,37){(c)}
\put(40,37){(d)}
\put(80,37){(e)}
\end{picture}
\end{center}
\end{figure}
Thus, for a knot $K$ oriented as in Fig. \ref{ori_cable} and its diagram $D$, 
\begin{equation}\label{ex_cable_4}
J_4(K) = J(D^{4}) - 3 J(D^{2}) + 1.  
\end{equation}
Here, note that $w(D^{4})$ $=$ $w(D^{2})$ $=$ $0$.  

The abelian group over $\mathbb{Z}_2$ generated by graphs associated with the cables $D^{n - 2k}$ of a diagram $D$ of an oriented knot $K$ is denoted by $\Gamma_{n}(D; \mathbb{Z}_2)$.  The subgroup of $\Gamma_n(D; \mathbb{Z}_2)$ generated by each graph with $k$ edges is denoted by $\Gamma_{n}^{k}(D; \mathbb{Z}_2)$, and its edged graph with $k$ edges is called a $k$-{\itshape{pairing}}, or simply a {\it{pairing}}.  Further, to avoid our confusion, we can refer to an $m$-pairing, where $m$ $=$ $k+l$, as $(k+l)$-pairing.  Since the definition of a $k$-pairing depends only on the integer $n$ and the number of components of $D^{n - 2 k}$, $k$-pairings are knot invariants.  

For the $k$-pairings $\textbf{s}$ $\in$ $\Gamma_{n}^{k}(D; \mathbb{Z}_2)$ and $\textbf{t}$ $\in$ $\Gamma^{k+1}_{n}(D; \mathbb{Z}_2)$, we define the homomorphism $d'_2 : \Gamma_{n}^{k}(D; \mathbb{Z}_2)$ $\to$ $\Gamma_{n}^{k+1}(D; \mathbb{Z}_2)$ as
\begin{equation}\label{def_dif_cable}
d'_2 (\textbf{s}) = \sum_{\textbf{t}~:~{\text{an~edge~added~to}}~\textbf{s} }\textbf{t}.  
\end{equation}
If $\textbf{s}$ is as shown in Fig. \ref{cable_ex_knot} (a), $d'_2({\textrm{(a)}})$ $=$ (b) $+$ (c) $+$ (d).  Similarly, $d'_{2}({\textrm{(b)}})$ $=$ (e), $d'_{2}({\textrm{(c)}})$ $=$ (e), $d'_{2}({\textrm{(d)}})$ $=$ $0$, and $d'_{2}({\textrm{(b)}} + {\textrm{(c)}} + {\textrm{(d)}})$ $=$ (e) $+$ (e) $+$ $0$ $=$ $0$, as in Fig. \ref{dif_cable}.   The map $d'_{2}$ was introduced by Khovanov \cite{coloredkhovanov} and satisfies the following.  
\begin{proposition}\label{graph_knot_pro}
The map $d'_{2}$ satisfies ${d'_{2}}^{2}$ $=$ $0$.  
\end{proposition}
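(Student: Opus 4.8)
The plan is to recognize a $k$-pairing as nothing but a matching of size $k$ in the path graph $P_n$ whose vertices are the $n$ dots on the vertical line and whose edges join adjacent dots; the stipulation that ``each dot appears in at most one pair'' is precisely the matching condition (cf.\ Fig.\ \ref{cable_ex_knot}). The crucial structural fact I would isolate first is that such matchings are closed under passage to subsets: if a collection of edges is a valid pairing, then deleting any one edge again yields a valid pairing. In other words, the $k$-pairings for varying $k$ assemble into a simplicial complex, and $d'_2$ is the associated unsigned coboundary operator that adds one cell.

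With this in hand I would simply expand the composite. For a fixed $k$-pairing $\mathbf{s}$,
\begin{equation}
{d'_2}^{2}(\mathbf{s}) = \sum_{\mathbf{t}} \sum_{\mathbf{u}} \mathbf{u},
\end{equation}
where $\mathbf{t}$ ranges over the $(k+1)$-pairings obtained by adding one edge to $\mathbf{s}$, and $\mathbf{u}$ over the $(k+2)$-pairings obtained by adding one edge to $\mathbf{t}$. Collecting terms, the coefficient of a fixed $(k+2)$-pairing $\mathbf{u}$ in ${d'_2}^{2}(\mathbf{s})$ is the number of intermediate $(k+1)$-pairings $\mathbf{t}$ with $\mathbf{s} \subset \mathbf{t} \subset \mathbf{u}$. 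Writing $\mathbf{u} = \mathbf{s} \sqcup \{e_1, e_2\}$ for the two new edges, any such $\mathbf{t}$ must be either $\mathbf{s} \cup \{e_1\}$ or $\mathbf{s} \cup \{e_2\}$, and these are distinct since $e_1 \neq e_2$.

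The key step, which is exactly where the down-closedness noted above is used, is to check that both candidates are genuinely valid $(k+1)$-pairings: being subcollections of the matching $\mathbf{u}$, each of $\mathbf{s} \cup \{e_1\}$ and $\mathbf{s} \cup \{e_2\}$ is again a matching. Hence there are always exactly two intermediate pairings, so every $(k+2)$-pairing $\mathbf{u}$ occurs in ${d'_2}^{2}(\mathbf{s})$ with coefficient $2$. Since we work over $\mathbb{Z}_2$, where $2 = 0$, every term cancels, giving ${d'_2}^{2}(\mathbf{s}) = 0$; extending $\mathbb{Z}_2$-linearly then yields ${d'_2}^{2} = 0$ on all of $\Gamma_{n}^{k}(D; \mathbb{Z}_2)$.

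The only point that genuinely requires care — and which I regard as the real content rather than a routine check — is precisely this guarantee that neither of the two ``add one edge at a time'' intermediate states is forbidden. If it were possible for, say, $\mathbf{s} \cup \{e_1\}$ to fail to be a pairing while $\mathbf{u}$ itself remained one, then $\mathbf{u}$ could be reached along only a single path and would survive with coefficient $1$, breaking the argument. The matching condition rules this out, since any subcollection of a matching is a matching; this simplicial structure is exactly what makes the mod-$2$ cancellation work and is the reason the statement is so clean over $\mathbb{Z}_2$.
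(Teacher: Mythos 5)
Your proof is correct and follows essentially the same route as the paper's: both arguments fix a $(k+2)$-pairing $\mathbf{u}$ reachable from $\mathbf{s}$, observe that there are exactly two intermediate $(k+1)$-pairings, and conclude that each $\mathbf{u}$ appears with coefficient $2=0$ over $\mathbb{Z}_2$. Your explicit appeal to down-closedness of matchings (any subset of a pairing is a pairing) is a cleaner justification of the step the paper asserts only informally, but it is the same counting argument, not a different method.
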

\begin{proof}
Let us consider the $k$-pairing $\textbf{s}$, two $(k+1)$-pairings $\textbf{t}$, $\textbf{t'}$, and the $(k+2)$-pairing $\textbf{u}$ satisfying the following condition ($\ast$): if we remove one edge from $\textbf{u}$ (e.g., (e) of Fig. \ref{cable_ex_knot}), we have $\textbf{t}$ and $\textbf{t'}$ (e.g., (b) and (c) of Fig. \ref{cable_ex_knot}); if we remove the two edges corresponding to those added to $\textbf{s}$ to give $\textbf{t}$ and $\textbf{t'}$, we have $\textbf{s}$ (e.g., (a) of Fig. \ref{cable_ex_knot}).  When we take an arbitrary $\textbf{s}$, there exists a pairing $\textbf{u}$ satisfying ($\ast$).  When we have such a pair as the $k$-pairing $\textbf{s}$ and $(k+2)$-pairing $\textbf{u}$, there exist exactly two $(k+1)$-pairings, i.e., $\textbf{t}$ and $\textbf{t'}$, satisfying ($\ast$).  Thus, we have
\begin{equation}
\begin{split}
d'_{2}(\textbf{t}) &= \textbf{u}_1 + \textbf{u}_2 + \dots + \textbf{u}_l~{\text{s.t.}}~\textbf{u}_i \neq \textbf{u}_j~{\text{for arbitrary}}~i, j~(1 \le i < j \le l), \\
d'_{2}(\textbf{t'}) &= \textbf{u'}_1 + \textbf{u'}_2 + \dots + \textbf{u'}_m~{\text{s.t.}}~\textbf{u'}_i \neq \textbf{u'}_j~{\text{for arbitrary}}~i, j~(1 \le i < j \le m).\\
\end{split}
\end{equation}
By the condition ($\ast$), there exist $\textbf{u}_i$ and $\textbf{u'}_j$ such that $\textbf{u}_i$ $=$ $\textbf{u'}_j$ $=$ $\textbf{u}$.  Then, for an arbitrary $k$-pairing $\textbf{s}$, we have ${d'_2}^{2}(\textbf{s})$ $=$ $\sum_{\textbf{u}} 2 \textbf{u}$ $=$ $0$ (e.g., Fig. \ref{dif_cable}).  
\end{proof}
\begin{figure}
\begin{center}
\qquad\qquad\quad \includegraphics[width=4cm, bb=0 0 162.33 252.33]{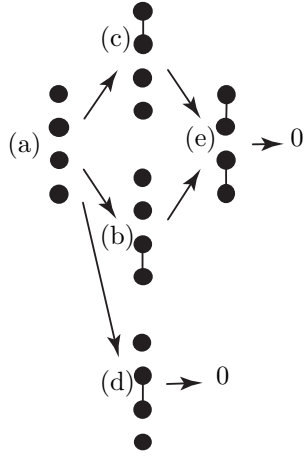}
\end{center}
\begin{picture}(0,0)
\put(160,175){(c)}
\put(125,135){(a)}
\put(160,100){(b)}
\put(160,45){(d)}
\put(192,137){(e)}
\put(204,47){$0$}
\put(232,137){$0$}
\end{picture}
\caption{Example of the map $d'_{2}$ on $k$-pairings of the cable $D^{4}$ of knot diagram $D$ for $k$ $=$ $0$, $1$, and $2$.  The $0$-map is usually omitted as the arrow from (d) or (e).}\label{dif_cable}
\end{figure}

\subsection{Extension to the case of coefficient $\mathbb{Z}$}\label{coeff_z}
The additional consideration of the sign of a $k$-pairing implies the $\mathbb{Z}$ case.  To do this, we consider replacing (\ref{def_dif_cable}) with another formula.  Recall the abelian group $\Gamma_n (D; \mathbb{Z}_2)$ of Sec. \ref{coeff_z_2} for an oriented knot diagram $D$.  We consider the abelian group $\Gamma^{k}_n (D)$ over the coefficient $\mathbb{Z}$ generating every $k$-pairing and set $\Gamma_n (D)$ $=$ $\oplus_{k = 0}^{\infty} \Gamma_{n}^{k}(D)$.  We define an operator $\Gamma_{n}^{k}(D)$ $\to$ $\Gamma_{n}^{k+1}(D)$ by the map between the $k$-pairing $\textbf{s}$ and the $(k+1)$-pairing $\textbf{t}$: 
\begin{equation}\label{def_dif_cable_a}
d'(\textbf{s}) = \sum_{\textbf{t}~:~\text{an edge added to}~\textbf{s}} (\textbf{s} : \textbf{t})~\textbf{t}
\end{equation}
where the incidence number $(\textbf{s} : \textbf{t})$ $=$ $(-1)^{t}$, and $t$ is the number of edges in $\textbf{t}$ on the above from the ``new'' edge in $\textbf{t}$ which is not in $\textbf{s}$ (Fig. \ref{dif_cable_rivited}). 
\begin{figure}[h!]
\begin{center}
\begin{picture}(0,0)
\put(30,44){$+$}
\put(30,87){$+$}
\put(23,127){$+$}
\put(63,44){$-$}
\put(67,134){$+$}
\end{picture}
\quad \includegraphics[width=3cm, bb=0 0 115.67 232]{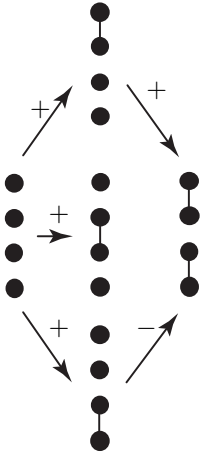}
\caption{Example of operator $d'$ on $\Gamma_{4}(D)$.  Sign $+$ ($-$) corresponds to $1$ ($-1$).  When we write the map $d'$ as in this figure, the arrows corresponding to $0$-maps are often omitted.  }\label{dif_cable_rivited}
\end{center}
\end{figure}
\begin{proposition}
\[d'^{2} = 0.  \]
\end{proposition}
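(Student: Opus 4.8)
The plan is to mimic the $\mathbb{Z}_2$ computation of Proposition \ref{graph_knot_pro} while tracking the signs $(\textbf{s} : \textbf{t})$. First I would expand, for a fixed $k$-pairing $\textbf{s}$,
\[
{d'}^{2}(\textbf{s}) = \sum_{\textbf{u}} \Big( \sum_{\textbf{t}} (\textbf{s} : \textbf{t})\,(\textbf{t} : \textbf{u}) \Big)\, \textbf{u},
\]
where the inner sum runs over the $(k+1)$-pairings $\textbf{t}$ obtained by adding one edge to $\textbf{s}$ and from which the $(k+2)$-pairing $\textbf{u}$ arises by adding one further edge. By the combinatorial fact already isolated in the proof of Proposition \ref{graph_knot_pro} (condition $(\ast)$), each $\textbf{u}$ reachable from $\textbf{s}$ is reached through exactly two intermediate pairings $\textbf{t}$ and $\textbf{t}'$, corresponding to the two orders in which the pair of new edges, say $e_1$ and $e_2$, can be added. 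Hence it suffices to prove that $(\textbf{s} : \textbf{t})\,(\textbf{t} : \textbf{u}) = -\,(\textbf{s} : \textbf{t}')\,(\textbf{t}' : \textbf{u})$ for every such $\textbf{u}$, so that the two contributions cancel in $\mathbb{Z}$.

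Next I would compute the four signs explicitly from the definition of $(\textbf{s} : \textbf{t}) = (-1)^{t}$. Order the edges of every pairing along the vertical line, and write $\alpha$ (resp. $\beta$) for the number of edges of $\textbf{s}$ lying above $e_1$ (resp. $e_2$); without loss of generality $e_1$ lies above $e_2$. Adding $e_1$ first contributes $(-1)^{\alpha}$, and then adding $e_2$ contributes $(-1)^{\beta + 1}$, since $e_1$ now also lies above $e_2$; the product is $(-1)^{\alpha + \beta + 1}$. In the opposite order, adding $e_2$ first contributes $(-1)^{\beta}$, and then adding $e_1$ contributes $(-1)^{\alpha}$, because $e_2$ lies below $e_1$ and does not enter its count; the product is $(-1)^{\alpha + \beta}$. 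These two products are negatives of one another, so the coefficient of every $\textbf{u}$ in ${d'}^{2}(\textbf{s})$ vanishes, and extending over all $\textbf{s}$ by linearity gives ${d'}^{2} = 0$ on $\Gamma_{n}(D)$.

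The only delicate point, and the step I expect to require the most care, is the bookkeeping of which newly added edge affects the count of edges above the other. The asymmetry producing the extra factor of $-1$ comes entirely from the fact that the lower edge $e_2$ ``sees'' the upper edge $e_1$ above it precisely when $e_2$ is added last, whereas in the reverse order $e_1$ never sees $e_2$. This is exactly the Koszul-type sign cancellation familiar from simplicial differentials, so once the top-to-bottom ordering convention underlying $(\textbf{s} : \textbf{t})$ is fixed, the cancellation is forced; the remainder of the argument is the routine combinatorics already supplied by Proposition \ref{graph_knot_pro}.
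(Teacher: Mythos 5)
Your proof is correct and follows essentially the same route as the paper: reduce to a fixed $k$-pairing $\textbf{s}$, observe (as in Proposition \ref{graph_knot_pro}) that each $(k+2)$-pairing $\textbf{u}$ obtained from $\textbf{s}$ is reached through exactly two intermediates $\textbf{t}$, $\textbf{t'}$, and cancel the two contributions via $(\textbf{s} : \textbf{t})(\textbf{t} : \textbf{u}) = -(\textbf{s} : \textbf{t'})(\textbf{t'} : \textbf{u})$. The only difference is that you verify this sign identity explicitly, by counting the edges above the upper and lower new edges in the two orders of addition, whereas the paper asserts it directly from the definition of the incidence number.
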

\begin{proof}
For an arbitrary $k$-pairing $\textbf{s}$, it is sufficient to prove $d'(d'(\textbf{s}))$ $=$ $0$.  Let us consider the $(k+2)$-pairing $\textbf{u}$ that has exactly two edges more than $\textbf{s}$.  If we delete only one edge of the graph $\textbf{u}$, we have only two $(k+1)$-pairings $\textbf{t}$ and $\textbf{t'}$, each of which has exactly one edge more than $\textbf{s}$.  This is the same discussion as for the proof of Proposition \ref{graph_knot_pro}.  By the definition of the incidence number, $(\textbf{s} : \textbf{t}) (\textbf{t} : \textbf{u})$ $=$ $- (\textbf{s} : \textbf{t'}) (\textbf{t'} : \textbf{u})$ and so
\begin{equation}
\begin{split}
d'(d'(\textbf{s})) &= \sum_{\textbf{u}} \{ (\textbf{s} : \textbf{t}) (\textbf{t} : \textbf{u}) + (\textbf{s} : \textbf{t'}) (\textbf{t'} : \textbf{u})\} \textbf{u} \\
&= 0.
\end{split}
\end{equation}
\end{proof}

\subsection{Extension to the case of links}\label{extend_link}
We can now consider a natural extension of the discussions in Secs. \ref{coeff_z_2} and \ref{coeff_z} to the case of (unframed) links.  For fixed a tuple of nonnegative integers $\textbf{n}$ $=$ $(n_1, n_2, \dots, n_l)$, the colored Jones polynomial $J_{\textbf{n}}$ of a link diagram $D$ is written as
\begin{equation}
J_{\textbf{n}}(L) = \sum_{\textbf{k} = 0}^{\lfloor \textbf{n}/2 \rfloor} (-1)^{|{\textbf{k}}|} \begin{pmatrix} \textbf{n} - \textbf{k} \\ \textbf{k} \end{pmatrix} J(D^{\textbf{n} - 2 \textbf{k}})
\end{equation}
where $\textbf{k}$ $=$ $(k_1, k_2, \dots, k_l)$ for an arbitrary $k_i$ $\in$ $\mathbb{Z}_{\ge 0}$, $|{\textbf{k}}|$ $=$ $\sum_i k_i$, $\left( \begin{smallmatrix} \textbf{n} - \textbf{k} \\ \textbf{k} \end{smallmatrix} \right)$ $=$ $\prod_{i = 1}^{l} \left( \begin{smallmatrix} n_i - k_i \\ k_i \end{smallmatrix} \right)$, the sum $\sum_{\textbf{k} = \textbf{0}}^{\lfloor {\textbf{n}}/2 \rfloor}$ is the summation over all $0$ $\le$ $k_i$ $\le$ $\lfloor n_i / 2 \rfloor$ for all $i$, and $J(D)$ is the Jones polynomial of a link diagram $D$ of $L$ with $J(D^{0})$ $=$ $1$.  

\begin{figure}
\begin{center}
\begin{picture}(0,0)
\put(5,53){I}
\put(65,62){I\!I}
\put(180,58){I}
\put(232,61){I\!I}
\put(190,32){$1$}
\put(173,43){$2$}
\put(247,35){$3$}
\put(245,58){$1$}
\put(243,100){$2$}
\qbezier(261,90)(256,95)(251,100)
\put(310,-5){I}
\put(322,-5){I\!I}
\put(303,22){\small$1$}
\put(303,47){\small$2$}
\put(316,12){\small$1$}
\put(316,36){\small$2$}
\put(316,59){\small$3$}
\end{picture}
\includegraphics[width=12cm, bb=0 0 505.33 155.33]{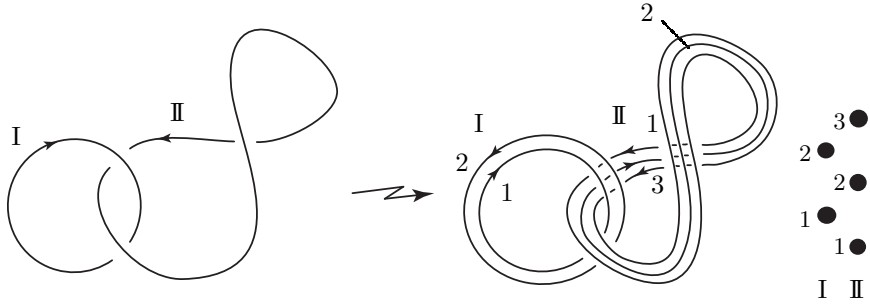}
\caption{The $(2, 3)$-cable $D^{(2, 3)}$ of an oriented link diagram $D$.  The numbers I and I\!I indicate those of components.  The numbers $1$, $2$, and $3$ indicate those of the strands in a component.  }\label{cable_ex_ori}
\end{center}
\end{figure}
\begin{figure}
\begin{center}
\begin{picture}(0,0)
\put(10,45){I}
\put(65,55){I\!I}
\put(165,55){I}
\put(217,63){I\!I}
\put(162,26){$1$}
\put(148,40){$2$}
\put(202,58){$3$}
\put(209,40){$1$}
\put(233,61){$2$}
\put(255,-5){I}
\put(267,-5){I\!I}
\put(248,20){\small$1$}
\put(248,42){\small$2$}
\put(261,12){\small$1$}
\put(261,29){\small$2$}
\put(261,49){\small$3$}
\qbezier(231,58)(226,53)(221,48)
\end{picture}
\quad \includegraphics[width=10cm, bb=0 0 400 113.5]{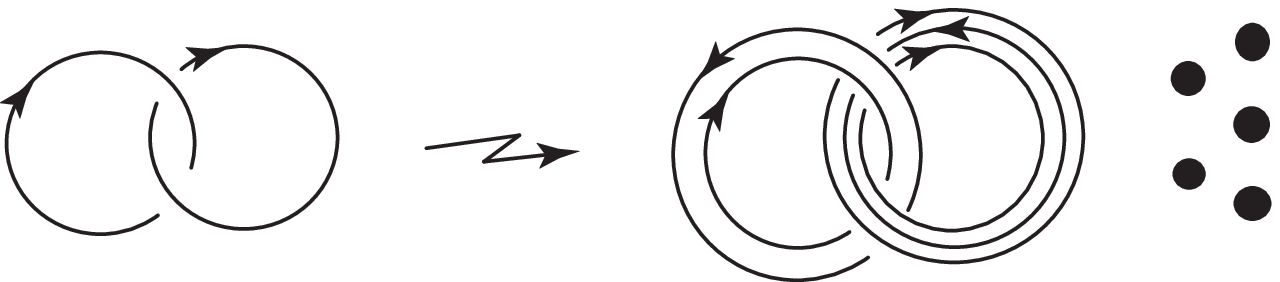}
\end{center}
\caption{Another example of the $(2, 3)$-cable of an oriented link and its associate graph $\Gamma_{(2, 3)}$.  Note that the graph is the same as that in Fig. \ref{cable_ex_ori}.}\label{cable_ex_2}
\end{figure}

Then, for each link diagram $D^{\textbf{n} - 2 \textbf{k}}$ (e.g., Figs. \ref{cable_ex_ori} and \ref{cable_ex_2}), we can consider the Khovanov homology groups of the diagram, and for the set of all diagrams of $D^{\textbf{n} - 2 \textbf{k}}$, we consider $\Gamma_{\textbf{n}}$ as follows.  First, we give the definition in the case of the coefficient $\mathbb{Z}_2$ in the same way as done in Sec. \ref{coeff_z_2}.  

Recall the definition of the graph $k$-pairing in Sec. \ref{coeff_z_2}.  The $k$-pairing has dots in only one vertical line, since a knot consists of one component. Extending the definition of a $k$-pairing to an $l$-component link, we consider dots in $l$ vertical lines.  The rule of adding edges is the same.  It is possible to add one edge connecting two neighborhood dots, but any single dots can connect with at most one other dot.  Let $\textbf{n}$ $=$ $(n_1, n_2, \dots, n_l)$ and $\textbf{k}$ $=$ $(k_1, k_2, \dots, k_l)$ for nonnegative integers $n_i$, $k_i$.  Then, for a given $\textbf{n}$, the $\textbf{n} - 2{\textbf{k}}$ cable $D^{\textbf{n} - 2 \textbf{k}}$ of the $l$-component link diagram $D$ determines the set of graphs having $l$ vertical lines, with $n_i$ dots and $k_i$ edges along the $i$-th vertical line.  The graphs of the set depending on $\textbf{n}$ and $\textbf{k}$ are denoted by $|{\textbf{k}}|$-{\itshape{pairings}}, simply {\itshape{pairings}} where $|{\textbf{k}}|$ $=$ $\sum_{i=1}^{l} k_i$.  The expression "$(k+l)$-pairings"  is permitted also for nonnegative integers $k$ and $l$.  

The abelian group over $\mathbb{Z}_2$ generated by $\textbf{k}$-pairings associated with the ${\textbf{n} - 2\textbf{k}}$ cable of a diagram $D$ of an oriented link $L$ is denoted by $\Gamma_{\textbf{n}}(D; \mathbb{Z}_2)$.  The subgroup of $\Gamma_{\textbf{n}}(D; \mathbb{Z}_2)$ generated by each graph with $k$ edges is denoted by $\Gamma_{\textbf{n}}^{k}(D; \mathbb{Z}_2)$.  Considering every $\textbf{k}$ $=$ $( k_1,$ $k_2, \dots, k_l )$ such that $|{\textbf{k}}|$ $=$ $k$, we have $\Gamma_{\textbf{n}}(D; \mathbb{Z}_2)$ $=$ $\oplus_{|{\textbf{k}}| = k} \Gamma_{\textbf{n}}^{k}(D; \mathbb{Z}_2)$.  As the definition of a $|{\textbf{k}}|$-pairing depends only on the integer tuple $\textbf{k}$ and the number of components of $D^{\textbf{n} - 2 \textbf{k}}$, $|{\textbf{k}}|$-pairings are link invariants.  For the pairings $\textbf{s}$ $\in$ $\Gamma_{\textbf{n}}^{k}(D; \mathbb{Z}_2)$ and $\textbf{t}$ $\in$ $\Gamma_{\textbf{n}}^{k + 1}(D; \mathbb{Z}_2)$, we define the homomorphism $d'_{2} : \Gamma_{\textbf{n}}^{k}(D; \mathbb{Z}_2)$ $\to$ $\Gamma_{\textbf{n}}^{k + 1}(D; \mathbb{Z}_2)$ by
\begin{equation}
d'_{2}(\textbf{s}) = \sum_{\textbf{t}~:~\text{an edge added to}~\textbf{s} } \textbf{t}.  
\end{equation}
\begin{ex}
Let us consider the graph in Fig. \ref{graph_link}, where $\textbf{s}$ $=$ $(a)$.  Then, $d'_{2}((a))$ $=$ $(b)$ $+$ $(c)$ $+$ $(d)$ and ${d'}_{2}^{2}((a))$ $=$ $d'_{2}((b) + (c) + (d))$ $=$ $(e)$ $+$ $(f)$ $+$ $(f)$ $+$ $(e)$ $=$ $0$ ({\rm{mod}} $2$).  
\end{ex}
\begin{figure}[h!]
\begin{center}
\includegraphics[width=5cm, bb=0 0 230.69 200]{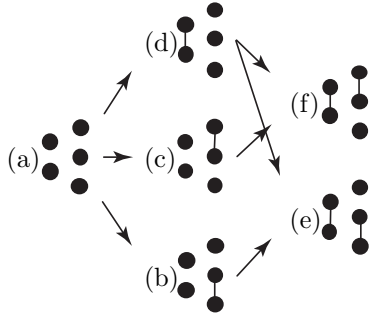}
\end{center}
\begin{picture}(0,0)
\put(88,77){(a)}
\put(140,32){(b)}
\put(140,77){(c)}
\put(140,121){(d)}
\put(195,54){(e)}
\put(195,98){(f)}
\end{picture}
\caption{(a): $0$-pairing.  (b), (c), (d): $1$-pairings.  (e), (f): $2$-pairings.  The arrows indicate maps: $d'_{2}({\rm{(a))}}$ $=$ (b) $+$ (c) $+$ (d) and  $d'_{2}({\rm{(d)}})$ $=$ (e) $+$ (f).  $d'_{2}({\rm{(c)}})$ $=$ (f).  $d'_{2}({\rm{(b)}})$ $=$ (e).  $0$-maps are omitted, e.g., $d'_{2}({\rm{(e)}})$ $=$ $0$.}\label{graph_link}
\end{figure}
In general, we have the following.  
\begin{proposition}\label{dif2}
\begin{equation}\label{graph_diff}
{d'_{2}}^{2} = 0.  
\end{equation}
\end{proposition}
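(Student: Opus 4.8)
The plan is to reduce (\ref{graph_diff}) to a purely combinatorial count of pairings and then repeat, almost verbatim, the pairwise cancellation argument already used for the knot case in Proposition \ref{graph_knot_pro}. Since ${d'_2}^2$ is a homomorphism of abelian groups, it suffices to verify ${d'_2}^2(\textbf{s}) = 0$ on an arbitrary $k$-pairing $\textbf{s} \in \Gamma_{\textbf{n}}^{k}(D; \mathbb{Z}_2)$. First I would expand the two applications of $d'_2$ and group terms by the resulting $(k+2)$-pairing, writing
\begin{equation}
{d'_2}^{2}(\textbf{s}) = \sum_{\textbf{u}} N(\textbf{s}, \textbf{u})\, \textbf{u},
\end{equation}
where $\textbf{u}$ ranges over the $(k+2)$-pairings and $N(\textbf{s}, \textbf{u})$ is the number of $(k+1)$-pairings $\textbf{t}$ for which $\textbf{t}$ is an edge added to $\textbf{s}$ and $\textbf{u}$ is an edge added to $\textbf{t}$. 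Thus the whole statement reduces to showing that $N(\textbf{s}, \textbf{u})$ is even for every pair $(\textbf{s}, \textbf{u})$.

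The key step is to observe that the only $\textbf{u}$ that can contribute are those obtained from $\textbf{s}$ by adding exactly two edges, and that for every such $\textbf{u}$ one has $N(\textbf{s}, \textbf{u}) = 2$. To see this, regard pairings as edge sets and write $\textbf{u} \setminus \textbf{s} = \{e_1, e_2\}$ for the two edges occurring in $\textbf{u}$ but not in $\textbf{s}$. Because $\textbf{u}$ is itself a legal pairing, in which every dot lies in at most one edge, the edges $e_1$ and $e_2$ share no endpoint; this is automatic whether they lie on the same vertical line or on two different ones. Consequently both $\textbf{s} \cup \{e_1\}$ and $\textbf{s} \cup \{e_2\}$ are again legal $(k+1)$-pairings, they are the only intermediate pairings $\textbf{t}$ with $\textbf{s} \subset \textbf{t} \subset \textbf{u}$, and they are distinct since $e_1 \neq e_2$. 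Hence $N(\textbf{s}, \textbf{u}) = 2$, and over $\mathbb{Z}_2$ each term $2\,\textbf{u}$ vanishes, so ${d'_2}^{2}(\textbf{s}) = 0$.

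The only feature distinguishing the link case from the knot case treated in Proposition \ref{graph_knot_pro} is the presence of $l$ vertical lines rather than a single one, so the main point to check is that this extra structure never spoils the ``exactly two intermediate pairings'' count. I expect this to be the sole genuine obstacle, and it dissolves at once: the vertex-disjointness of $e_1$ and $e_2$ that drives the argument is local to each line, and two added edges placed on different lines are compatible for free. Since the matchings on the disjoint union of the $l$ paths factor as a product of the per-line matching data, the pairwise cancellation is inherited line by line, and the proof of Proposition \ref{graph_knot_pro} carries over without modification.
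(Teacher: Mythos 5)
Your proof is correct and takes essentially the same approach as the paper: both group the terms of ${d'_2}^{2}(\textbf{s})$ by the resulting $(k+2)$-pairing $\textbf{u}$ and observe that there are exactly two intermediate $(k+1)$-pairings between $\textbf{s}$ and $\textbf{u}$, so each $\textbf{u}$ occurs with coefficient $2 = 0$ over $\mathbb{Z}_2$. Your explicit justification of the "exactly two" count via vertex-disjointness of the two added edges is just a cleaner statement of what the paper asserts directly.
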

\begin{proof}
The proof is similar to that of Proposition \ref{graph_knot_pro}.  Let us consider an arbitrary pairing $\textbf{s}$ with $k$ edges.  When we arbitrarily add exactly two edges to $\textbf{s}$, we have a pairing with $(k+2)$ edges denoted by $\textbf{u}$.  Below, we consider such a pair $(\textbf{s}, \textbf{u})$.  There exist exactly two pairings with $(k+1)$ edges between $\textbf{s}$ and $\textbf{u}$, meaning that when one of the two edges of $\textbf{u}$ is added to $\textbf{s}$, we get a pairing with $(k+1)$ edges, and there are exactly two possibilities for adding an edge.  These two pairings are denoted by $\textbf{t}$ and $\textbf{t'}$.  For any $|{\textbf{k}}|$-pairing $\textbf{s}$ and $(|{\textbf{k}}| + 2)$-pairing $\textbf{u}$ such that $\textbf{u}$ is formed by adding two edges to $\textbf{s}$, we have that $d'_{2}(d'_{2}(\textbf{s}))$ contains terms $\textbf{u}$ $=$ $\textbf{u}_i$ such that $d'_{2}(\textbf{t})$ $=$ $\sum_{m} \textbf{u}_m$ ($\textbf{u}_m$ $\neq$ $\textbf{u}$ if $m$ $\neq$ $i$) and $\textbf{u}$ $=$ $\textbf{u'}_j$ such that $d'_{2}(\textbf{t'})$ $=$ $\sum_{m} \textbf{u'}_m$ ($\textbf{u'}_m$ $\neq$ $\textbf{u}$ if $m$ $\neq$ $j$).  Therefore, the other terms, i.e., $\textbf{u}_m$ ($m$ $\neq$ $i$), $\textbf{u'}_m$ ($m$ $\neq$ $j$), do not reach $\textbf{u}$ by $d'_{2}$.  Then, the image $d'_{2}(d'_{2}(\textbf{s}))$ contains $2 \textbf{u}$ that is $0$ for each pair $(\textbf{s}, \textbf{u}$).  If we consider another pair $(\textbf{s}, \textbf{u'})$ consisting of the same $|{\textbf{k}}|$-pairing $\textbf{s}$ and another $(|\textbf{k}| + 2)$-pairing $\textbf{u'}$ formed by adding exactly two edges to $\textbf{s}$, the image $d'_{2}(d'_{2}(\textbf{s}))$ contains $2 \textbf{u'}$ by the same discussion.  Then, 
\[d'_{2}(d_{2}(\textbf{s})) = \sum_{\textbf{u}}~2 \textbf{u} = 0.\]
Linearly extending the above formula completes the proof.  
\end{proof}

Let us now consider extending the above discussion to the case of the coefficient $\mathbb{Z}$.  For a given $\textbf{n}$ $=$ $(n_1, n_2, \dots, n_l)$ and $\textbf{k}$ $=$ $(k_1, k_2, \dots, k_l)$, we consider the ${\textbf{n} - 2 \textbf{k}}$ cable of an $l$-component link diagram $D$ of an oriented link $L$ (Figs. \ref{cable_ex_ori} and \ref{cable_ex_2}) in a manner similar to Sec. \ref{extend_link}.  The group $\Gamma_{\textbf{n}}(D)$ over the coefficient $\mathbb{Z}$ is defined as the abelian group generated by $|\textbf{k}|$-pairings.  We denote by $\Gamma_{\textbf{n}}^{k}(D)$ the subgroup of $\Gamma_{\textbf{n}}(D)$ generating $|\textbf{k}|$-pairings such that $|{\textbf{k}}|$ $=$ $\sum_{i=1}^{l} k_i$ $=$ $k$, and then, $\Gamma_{\textbf{n}}(L)$ $=$ $\oplus_{k=0}^{\infty}\Gamma_{\textbf{n}}^{k}(L)$.

Next, we define an operator $\Gamma_{\textbf{n}}^{k}(D)$ $\to$ $\Gamma_{\textbf{n}}^{k+1}(D)$ by the homomorphism between $\Gamma_{\textbf{n}}^{k}(D)$ $\ni$ $\textbf{s}$ $\mapsto$ $\textbf{t}$ $\in$ $\Gamma_{\textbf{n}}^{k+1}(D)$: 
\begin{equation}\label{z_pairing_dif}
d'(\textbf{s}) = \sum_{{\textbf{t}}~:~{\text{an~edge~added~to}}~{\textbf{s}}} ({\textbf{s}}: {\textbf{t}})~{\textbf{t}}
\end{equation}
where the incidence number $(\textbf{s}, \textbf{t})$ $=$ $(-1)^{t}$ and $t$ is the number of edges in $\textbf{t}$ on the right or above from the ``new'' edges in $\textbf{t}$ which are not in $\textbf{s}$ (Fig. \ref{cable_graph}).

\begin{figure}
\begin{center}
\begin{picture}(0,0)
\put(55,37){$+$}
\put(55,54){$+$}
\put(55,72){$+$}
\put(120,20){$+$}
\put(115,32){$+$}
\put(120,48){$-$}
\put(121,87){$-$}
\put(10,48){(a)}
\put(77,85){(d)}
\put(77,47){(c)}
\put(77,8){(b)}
\put(142,75){(f)}
\put(142,20){(e)}
\end{picture}
\qquad \includegraphics[width=6cm, bb=0 0 286.92 200]{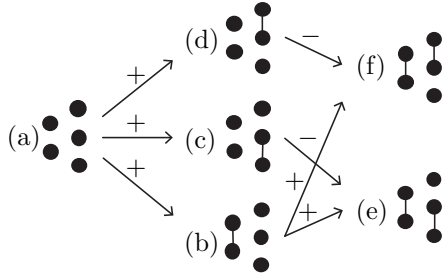}
\end{center}
\caption{Examples of the $k$-pairings of $\Gamma_{(2, 3)}^{k}(D)$ corresponding to the cable $D^{(2, 3)}$ of a link diagram $D$.  (a): $0$-pairing, (b), (c), (d): $1$-pairings, (e), (f): $2$-pairings.  In this case, the set $I_{k}$ of pairings such that $|{\textbf{k}}|$ $=$ $k_1$ $+$ $k_2$ $=$ $k$ is as follows: $I_0$ $=$ $\{$(a)$\}$, $I_1$ $=$ $\{$(b), (c), (d)$\}$.  $I_2$ $=$ $\{$(e), (f)$\}$.  }\label{cable_graph}
\end{figure}

\begin{ex}
We calculate $d'$ for the pairings shown in Fig. \ref{cable_graph}.  Then, $d'((a))$ $=$ $(b)$ $+$ $(c)$ $+$ $(d)$, and so, $d'(d'(a))$ $=$ $(e)$ $+$ $(f)$ $-$ $(e)$ $-$ $(f)$ $=$ $0$.  
\end{ex}

\begin{proposition}
\begin{equation}\label{dif3}
{d'}^{2} = 0.  
\end{equation}
\end{proposition}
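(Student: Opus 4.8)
The plan is to follow the structure of the proof of Proposition~\ref{graph_knot_pro} and of its $\mathbb{Z}$-refinement in Sec.~\ref{coeff_z}, replacing the mod-$2$ counting argument by a bookkeeping of signs. By linearity of $d'$ it suffices to check $d'(d'(\textbf{s})) = 0$ for an arbitrary $k$-pairing $\textbf{s}$. Expanding the definition (\ref{z_pairing_dif}) twice, I would write
\begin{equation}
d'(d'(\textbf{s})) = \sum_{\textbf{u}} \Bigl( \sum_{\textbf{t}} (\textbf{s} : \textbf{t})(\textbf{t} : \textbf{u}) \Bigr) \textbf{u},
\end{equation}
where the outer sum runs over all $(k+2)$-pairings $\textbf{u}$ obtained from $\textbf{s}$ by adding two edges and the inner sum over the $(k+1)$-pairings $\textbf{t}$ lying between $\textbf{s}$ and $\textbf{u}$. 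The claim then reduces to showing that the coefficient of each $\textbf{u}$ vanishes.

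Next I would record the purely combinatorial step already established over $\mathbb{Z}_2$: for a fixed pair $(\textbf{s}, \textbf{u})$, there are exactly two intermediate pairings. Writing $e_1$ and $e_2$ for the two edges of $\textbf{u}$ that are not in $\textbf{s}$, the only $(k+1)$-pairings $\textbf{t}$ with $\textbf{s} \subset \textbf{t} \subset \textbf{u}$ are $\textbf{t} = \textbf{s} + e_1$ and $\textbf{t'} = \textbf{s} + e_2$. This is the argument used in the proof of Proposition~\ref{graph_knot_pro}, and it transfers verbatim to the link case because adding edges within the $l$ vertical lines is independent across lines.

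The heart of the proof, and the step requiring care, is the sign cancellation
\begin{equation}
(\textbf{s} : \textbf{t})(\textbf{t} : \textbf{u}) = - (\textbf{s} : \textbf{t'})(\textbf{t'} : \textbf{u}).
\end{equation}
To obtain it, I would fix the total ordering of potential edge slots implicit in the phrase ``to the right or above'' from (\ref{z_pairing_dif}) and assume without loss of generality that $e_2$ comes after $e_1$. Letting $a$ be the number of edges of $\textbf{s}$ after $e_1$ and $b$ the number after $e_2$, the definition gives $(\textbf{s} : \textbf{t}) = (-1)^{a}$ and $(\textbf{t} : \textbf{u}) = (-1)^{b}$ along the first path, while along the second $(\textbf{s} : \textbf{t'}) = (-1)^{b}$ and $(\textbf{t'} : \textbf{u}) = (-1)^{a+1}$, the extra $+1$ appearing because inserting $e_2$ before $e_1$ places one further edge after $e_1$. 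Thus the two products differ by a factor of $-1$, so the inner coefficient of each $\textbf{u}$ equals $(\textbf{s} : \textbf{t})(\textbf{t} : \textbf{u}) + (\textbf{s} : \textbf{t'})(\textbf{t'} : \textbf{u}) = 0$, and summing over $\textbf{u}$ completes the argument.

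I expect the only subtlety to be confirming that the ``right or above'' prescription in (\ref{z_pairing_dif}) defines a genuine total order on all edge slots across the $l$ vertical lines, so that the counts $a$ and $b$ are well defined regardless of which line(s) contain $e_1$ and $e_2$. Once a fixed such order is chosen the sign computation above is purely formal, and the case $e_1 = e_2$ never occurs since $\textbf{u}$ is assumed to have two more edges than $\textbf{s}$.
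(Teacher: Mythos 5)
Your proposal is correct and follows essentially the same route as the paper's proof: fix a pair $(\textbf{s}, \textbf{u})$, observe there are exactly two intermediate $(k+1)$-pairings $\textbf{t}$ and $\textbf{t'}$, and cancel the two contributions via $(\textbf{s} : \textbf{t})(\textbf{t} : \textbf{u}) = -(\textbf{s} : \textbf{t'})(\textbf{t'} : \textbf{u})$. The only difference is that you carry out the sign count (the exponents $a$, $b$, and the extra $+1$) explicitly, whereas the paper simply asserts this relation ``by the definition of the incidence number''; your verification is a correct filling-in of that step.
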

\begin{proof}
Let us consider an arbitrary pairing $\textbf{u}$ of $\Gamma_{\textbf{n}}^{k+2}(L)$ (e.g., Fig. \ref{cable_graph} (f)).  If we select exactly two edges of $\textbf{u}$ to be deleted, the unique pairing $\textbf{s}$ (e.g., Fig. \ref{cable_graph} (a)) of $\Gamma_{\textbf{n}}^{k}(L)$ is determined.  For the pair $(\textbf{s}, \textbf{u})$, there exist exactly two pairings $\textbf{t}$ and $\textbf{t'}$ (e.g., Figs. \ref{cable_graph} (c) and (d), respectively), because we obtain $\textbf{u}$ when one of the two deleted edges is added to $\textbf{t}$ or $\textbf{t'}$.  Thus, we have that $d'(\textbf{s})$, which is the linear sum of pairings, contains the two terms $(\textbf{s} : \textbf{t}) \textbf{t}$ $+$ $(\textbf{s} : \textbf{t'}) \textbf{t'}$.  By the definition of the incidence number $(\textbf{s} : \textbf{t})$, $(\textbf{s} : \textbf{t}) (\textbf{t} : \textbf{u})$ $=$ $-$ $(\textbf{s} : \textbf{t'}) (\textbf{t'} : \textbf{u})$.  Then, $d'(d'(\textbf{s}))$ contains $d'((\textbf{s} : \textbf{t}) \textbf{t} + (\textbf{s} : \textbf{t'}) \textbf{t'})$, and we notice that if ${d'}^{2}(\textbf{s})$ contains $\textbf{u}$, $\textbf{u}$ must come from either $\textbf{t}$ or $\textbf{t'}$.  Hence, for an arbitrary $\textbf{u}$ contained $d'(d'(\textbf{s}))$ has the coefficient $(\textbf{s} : \textbf{t}) (\textbf{t} : \textbf{u})$ $+$ $(\textbf{s} : \textbf{t'}) (\textbf{t'} : \textbf{u})$ and we have
\[{d'}^{2}(\textbf{s}) = \sum_{\textbf{u}} \{(\textbf{s} : \textbf{t})(\textbf{t} : \textbf{u}) + (\textbf{s} : \textbf{t'})(\textbf{t'} : \textbf{u})\} \textbf{u} = 0.\] 
\end{proof}

\section{Operators of Khovanov complex for cable diagrams}\label{def_diff_cable}
\subsection{Homomorphisms $\rho$, $f$.}
As a preliminary, we recall or define three homomorphisms on the Khovanov complex $\{C^{i, j}(D), d\}$ of a oriented link diagrams.  
\begin{definition}
Let $S_{\rho}$ be the enhanced states defined as (a-$i$) + (b-$i$ : $p'$ $=$ $m(p, q)$ and $q'$ $=$ $1$) for $i$ $=$ $5$, $7$, and (a-$i$) + (b-$i$ : $\sum p' \otimes r'$ $=$ $\Delta(p)$ and $q'$ $=$ $1$) for $i$ $=$ $6, 8$.  The Khovanov complex generated by such $\{S\}$ is denoted by $C^{i, j}(S_{\rho})$.  This is actually a subcomplex, since $d(C^{i, j}(S_{\rho}))$ $\subset$ $C^{i, j}(S_{\rho})$.  There exists an isomorphism between $C^{i, j}(S_{\rho})$ and $C^{i, j}(D_{\infty 0})$, where $D_{\infty 0}$ is the link diagram given by neglecting two markers in (a-5)--(a-6) of Fig. \ref{type12}, i.e., smoothing as $D_{\infty}$ and $D_{0}$ in Fig. \ref{kauffman_b}.  In the $\mathbb{Z}_2$-case, we can denote these by $C^{i, j}({S_{\rho}; \mathbb{Z}_2})$ and $C^{i, j}(D_{\infty 0}; \mathbb{Z}_2)$.  

The homomorphism $\rho$ : $C^{i, j}(D)$ $\to$ $C^{i, j}(S_{\rho})$ is defined by 
\begin{equation}\label{rho2Eq}
\begin{split}
{\text{For}}~i=5~{\text{or}}~&7, \\
{\text{(a-$i$)}} = p \otimes q &\mapsto p \otimes q (={\text{(a-$i$)}}) + m(p \otimes q) \otimes 1 (= {\text{(b-$i$)}}),\\
{\text{(b-$i$)}} = p' \otimes x &\mapsto - \Delta(p') (={\text{(a-$i$)}}) - m(\Delta(p')) \otimes 1 (= {\text{(b-$i$)}}),\\
{\text{For}}~i=6~{\text{or}}~&8,\\
{\text{(a-$i$)}} = p &\mapsto p (= {\text{(a-$i$)}}) + \Delta(p) \otimes 1 (= {\text{(b-$i$)}}),\\
{\text{(b-$i$)}} = p' \otimes x \otimes r' &\mapsto - m(p' \otimes r') (= {\text{(a-$i$)}}) - \Delta(m(p' \otimes r')) \otimes 1 (= {\text{(b-$i$)}}),\\
{\text{otherwise}} &\mapsto 0.\\
\end{split}
\end{equation}
In the $\mathbb{Z}$-case, to fix the signs of every term of (\ref{rho2Eq}), the last negative marker of each case is always appeared in figures Fig. \ref{type12} of (a-$i$) and (b-$i$) for $i$ $=$ $5$, $6$, $7$, and $8$.  
\end{definition}
Here, the homomorphism $\rho$ is the same map as that used in the retraction for proving the invariance of the second Reidemeister move \cite[Page 132, Formula (2.2)]{ito3}.  The homomorphism $\rho$ satisfies the following property.  
\begin{proposition}\label{prop_rho2}
\begin{equation}\label{chain_rho}
\rho \circ d = d \circ \rho
\end{equation}
In particular, 
\begin{equation}\label{map_rho_d} 
\rho \circ d(S) = 0
\end{equation}
where $S$ is an enhanced state that has a part appearing in the left or right of Fig. \ref{pre_type12}.  
\end{proposition}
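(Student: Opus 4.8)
The plan is to establish the chain-map identity (\ref{chain_rho}) first and then to extract (\ref{map_rho_d}) as an immediate corollary. Since $\rho$ coincides with the retraction used in the proof of invariance under the second Reidemeister move \cite[Page 132, Formula (2.2)]{ito3}, where it is realized as (a component of) a chain map, the identity $\rho\circ d=d\circ\rho$ is in principle already available; the remaining task is to re-derive it in the present local setting and, above all, to keep track of the sign conventions fixed by the ``last negative marker'' rule imposed after (\ref{rho2Eq}).

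For the direct verification I would decompose the Khovanov differential as $d=d_{12}+d_{\mathrm{rest}}$, where $d_{12}$ changes the markers of the two distinguished crossings appearing in panels (a-5)--(a-8) of Fig. \ref{type12}, and $d_{\mathrm{rest}}$ changes every other marker. Because $\rho$ is given by the local formulas (\ref{rho2Eq}) supported near those two crossings and is the ``otherwise $\mapsto 0$'' assignment elsewhere, commuting $\rho$ with $d_{\mathrm{rest}}$ amounts to sliding the far saddles (each a copy of $m$ or $\Delta$) past the local operations $m$, $\Delta$, and the insertion of $1$. When the circles involved are disjoint this is automatic; when they overlap it is exactly the Frobenius compatibility $(m\otimes\mathrm{id})(\mathrm{id}\otimes\Delta)=\Delta m=(\mathrm{id}\otimes m)(\Delta\otimes\mathrm{id})$, together with associativity of $m$ and coassociativity of $\Delta$. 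For the part $d_{12}$ I would check the equality term by term on each generator (a-$i$), (b-$i$), $i\in\{5,6,7,8\}$, substituting the explicit images recorded in (\ref{rho2Eq}); the same algebra relations collapse both sides to a common expression.

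The delicate point, and the step I expect to be the main obstacle, is the sign bookkeeping in the $\mathbb{Z}$ case. Each summand of (\ref{rho2Eq}) carries a sign determined by the position of the changing negative marker in the chosen ordering, while $d$ itself introduces a sign through the incidence number $(S:T)$ of Sec. \ref{z_homology}. I would normalize once and for all, exactly as stipulated after (\ref{rho2Eq}), so that the last negative marker always belongs to the distinguished crossings in every panel (a-$i$), (b-$i$); with this normalization the sign produced by $d_{12}$ and the sign carried by $\rho$ agree, and the term-by-term comparison goes through without ambiguity. This yields (\ref{chain_rho}).

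Finally, (\ref{map_rho_d}) follows formally from (\ref{chain_rho}). The enhanced states $S$ whose local part is drawn in the left or right of Fig. \ref{pre_type12} are the ``pre-smoothing'' configurations at the two distinguished crossings and hence match none of the patterns (a-$i$) or (b-$i$); they therefore fall under the clause ``otherwise $\mapsto 0$'' of (\ref{rho2Eq}), so $\rho(S)=0$. Applying the chain-map identity gives $\rho\circ d(S)=d\circ\rho(S)=d(0)=0$, which is the asserted vanishing.
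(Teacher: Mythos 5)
Your strategy coincides with the paper's: both prove (\ref{chain_rho}) by a generator-by-generator local computation and then read (\ref{map_rho_d}) off from it. The gap is in your list of generators. You propose to check the local part $d_{12}$ only on the configurations (a-$i$), (b-$i$), $i\in\{5,6,7,8\}$ --- in the notation of Fig.~\ref{secondStates}, the states $S_{-+}$, $S_{+-}$, $S_{+-,1}$ on which $\rho$ can be nonzero. But $C^{i,j}(D)$ is also generated by the states whose two distinguished crossings both carry positive markers, $S_{++}(p,q)$: these are exactly the states of Fig.~\ref{pre_type12}, and on them (\ref{chain_rho}) is not automatic. Indeed $d\,\rho(S_{++}(p,q)) = d(0) = 0$, whereas $d(S_{++}(p,q))$ contains the two local terms $S_{-+}(p:q,\,q:p)$ and $S_{+-}(p,q)$ obtained by switching one or the other distinguished marker, so one still has to prove
\[
\rho\bigl(S_{-+}(p:q,\,q:p)\bigr) + \rho\bigl(S_{+-}(p,q)\bigr) = 0 ,
\]
a cancellation that holds only because of the minus signs in the second line of (\ref{rho2Eq}) (equivalently (\ref{def_rho2})). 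This is precisely the third case of the paper's own verification, and it is the entire content of (\ref{map_rho_d}). (The remaining generators $S_{--}$ are harmless: both distinguished markers are already negative, so $d$ produces only far terms, all killed by $\rho$.)

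Because this case is absent from your check, your final step is circular: you deduce (\ref{map_rho_d}) by writing $\rho\, d(S) = d\,\rho(S) = d(0) = 0$, but for the states $S$ of Fig.~\ref{pre_type12} the identity $\rho\, d(S) = d\,\rho(S)$ \emph{is} the assertion $\rho\, d(S)=0$ you are trying to prove, and it was never established. The repair is straightforward: add the $S_{++}$ case to the term-by-term verification and carry out the displayed cancellation. With that case included, your treatment of the far part $d_{\mathrm{rest}}$ (disjointness plus Frobenius compatibility, which matches the paper's $\sum_t S_{*}^{t}$ bookkeeping) and your sign normalization by the last-negative-marker convention do suffice to complete the argument along essentially the same lines as the paper.
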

\begin{proof}
These formulae can be proved by a straightforward calculation as follows.  

The local diagram $D$ that we focus on in a link diagram and its enhanced states $S_{-+}(p,q)$, $S_{+-}(p,q)$, $S_{+-,1}(p,q)$, $S_{++}(p,q)$, and $S_{--}(p,q)$ are defined by Fig. \ref{secondStates} where the symbols used follow Jacobsson \cite[Sec. 3.3.3]{jacobsson}.  Indices $\pm$ (resp. $1$) represent the signs of markers (resp. label $1$ for the center of the circle).  
\begin{figure}[htbp]
\includegraphics[width=12cm]{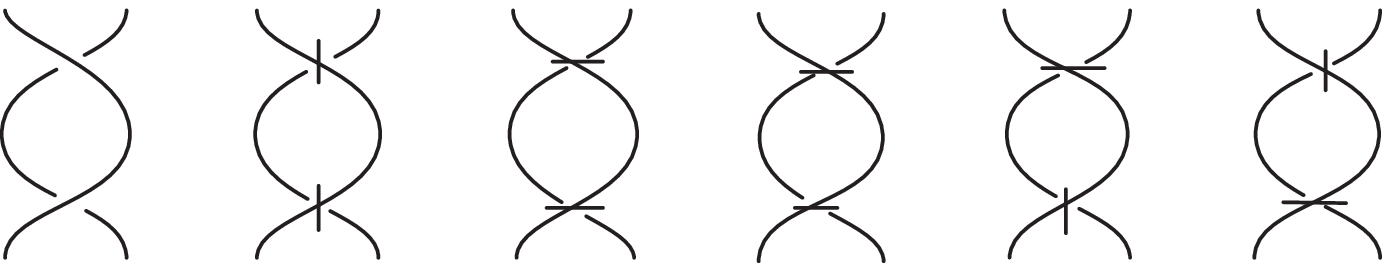}

\begin{picture}(0,0)
\put(10,0){$D$}
\put(60,0){$S_{-+}(p,q)$}
\put(50,40){$p$}
\put(100,40){$q$}
\put(120,0){$S_{+-}(p,q)$}
\put(140,70){$p$}
\put(140,40){$x$}
\put(140,12){$q$}
\put(180,0){$S_{+-,1}(p,q)$}
\put(200,70){$p$}
\put(200,40){$1$}
\put(200,12){$q$}
\put(240,0){$S_{--}(p,q)$}
\put(260,70){$p$}
\put(260,12){$q$}
\put(305,0){$S_{++}(p,q)$}
\put(325,70){$p$}
\put(325,12){$q$}
\end{picture}
\caption{Enhanced states generating $C^{i, j}(D)$.  Each of $p$ and $q$ is $x$ or $1$.  Every enhanced state appearing in this figure has a common ordering of negative markers followed by a negative marker appearing in this figure.}\label{secondStates}
\end{figure}

We also prepare the following notation as in Fig. \ref{notationState} by following Jacobsson \cite[Page 1216, Fig. 3]{jacobsson}.  The calculus of Fig. \ref{differential2} shows that one circle connects with another or one circle splits into two circles.  We describe the calculus using an abstract symbol: we use $(p:q)$ and $(q:p)$ for $p$ and $q$, where each of $p$ and $q$ is an $x$ or a $1$.  For example, the case $\Delta(1)$ $=$ $1 \otimes x$ $+$ $x \otimes 1$ in Fig. \ref{differential2} corresponds to $p$ $=$ $q$ $=$ $1$ and $(p:q, q:p)$ $=$ $(1, x)$ $+$ $(x, 1)$.  
\begin{figure}[htbp]
\begin{center}
\begin{picture}(0,0)
\put(17,35){$p$}
\put(50,35){$q$}
\put(158,55){$p:q$}
\put(158,10){$q:p$}
\end{picture}
\includegraphics[width=7cm]{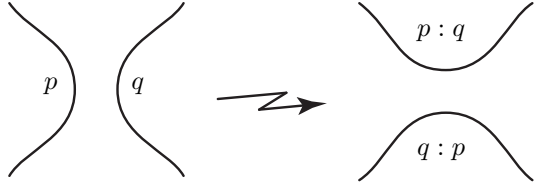}
\end{center}
\caption{Abstract symbols $p:q$ and $q:p$.}\label{notationState}
\end{figure}

The definition (\ref{rho2Eq}) of $\rho$ is presented as
\begin{equation}\label{def_rho2}
\begin{split}
S_{-+}(p, q) &\mapsto S_{-+}(p, q) + S_{+-, 1}(p:q, q:p), \\
S_{+-}(p, q) &\mapsto - S_{-+}(p:q, q:p) -  S_{+-,1}((p:q):(q:p), (q:p):(p:q)),\\
{\rm{othewise}} &\mapsto 0.
\end{split}
\end{equation}

Let $S_{*}^{t}(p, q) \in C^{i, j}(D)$ be an enhanced state such that $t$ is the last negative marker changed and $S_{*}^{t}(p, q) \in C^{i-1, j}(D)$ is available by producing  the new negative marker $t$ from an enhanced state $S_{*}(p, q)$.  The sum $\sum_t S_{*}^{t}(p, q)$ denotes the large sum where the index $t$ runs over the new negative markers.  

\begin{enumerate}
\item $d \rho (S_{-+}(p, q))$ $=$ $d(S_{-+}(p, q) + S_{+-,1}(p:q, q:p))$ $=$ $\sum_t (S_{-+}^{t}(p, q) + S_{+-,1}^{t}(p:q, q:p))$ $=$ $\rho (\sum_t S_{-+}^{t}(p, q))$ $=$ $\rho d (S_{-+}(p, q))$,
\item $d \rho (S_{+-}(p, q))$ $=$ $- d(S_{-+}(p:q, q:p) + S_{+-,1}((p:q):(q:p), (q:p):(p:q)))$ $=$ $-\sum_t (S_{-+}^{t}(p:q, q:p) + S_{+-,1}^{t}((p:q):(q:p), (q:p):(p:q)))$ $=$ $-\rho(\sum_t S_{-+}^{t}(p:q,q:p))$ $=$ $\rho(\sum_t S_{+-}^{t}(p,q))$ $=$ $\rho d (S_{+-}(p, q))$,
\item $d \rho(S_{++}(p, q))$ $=$ $0$ $=$ $\rho(S_{-+}(p:q, q:p) + S_{+-}(p, q))$ $=$ $\rho d(S_{++}(p, q))$, 
\item $d \rho(S_{+-,1}(p,q))$ $=$ $0$ $=$ $\rho d(S_{+-,1}(p,q))$.  
\end{enumerate}
This proves (\ref{chain_rho}).  In particular, the latter two formulae of $S_{++}(p, q)$ and $S_{--}(p, q)$ imply (\ref{map_rho_d}).  
\end{proof}
Here, we comment that $C^{i, j}(S_{\rho})$ is generated by $\{ S_{-+}(p, q)$ $+$ $S_{+-, 1}(p:q, q:p) \}$ using the convention in the proof of Prop. \ref{prop_rho2}.

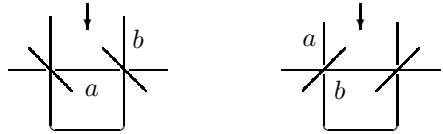
\begin{figure}[htbp]
\begin{center}
\begin{minipage}{60pt}
\begin{picture}(60,80)
\put(0,40){\line(1,0){14}}
\put(18,40){\line(1,0){24}}
\put(46,40){\line(1,0){14}}
\put(44,20){\line(0,1){40}}
\put(16,20){\line(0,1){40}}
\put(30,20){\oval(28,6)[b]}
{\linethickness{0.4pt}
\put(30,65){\vector(0,-1){10}}}
\qbezier(36,48)(44,40)(52,32)
\qbezier(8,48)(16,40)(24,32)
\put(29,30){$a$}
\put(47,48){$b$}
\end{picture}
\end{minipage}
\qquad\qquad
\begin{minipage}{60pt}
\begin{picture}(60,80)
\put(0,40){\line(1,0){60}}
\put(16,42){\line(0,1){16}}
\put(44,42){\line(0,1){16}}
\put(44,20){\line(0,1){18}}
\put(16,20){\line(0,1){18}}
\put(30,20){\oval(28,6)[b]}
{\linethickness{0.4pt}
\put(30,65){\vector(0,-1){10}}
}
\qbezier(36,32)(44,40)(52,48)
\qbezier(8,32)(16,40)(24,48)
\put(20,29){$b$}
\put(8,49){$a$}
\end{picture}
\end{minipage}
\end{center}
\caption{Enhanced states $S$, in order from the left, $S$ such that $d(S)$ may possibly contain (a-5), (a-6), (b-5), or (b-6) and $S$ such that $d(S)$ may possibly contain (a-7), (b-7), (a-8), or (b-8).}\label{pre_type12}
\end{figure}
From the next section, we will the homomorphism such as $\oplus_{\textbf{s} \in I_k} \rho$ on $C^{k, i, j}(D)$ for the set $I_k$ consisting of $k$-pairings.  We can denote the homomorphism $\oplus_m \rho$ by $\rho$ if there would be no confusion.

The homomorphism $f$ : $C^{i, j}(D)$ $\to$ $C^{i, j}(D)$ is defined by
\begin{equation}
\begin{split}
{\text{(b-$i$)}} &\mapsto {\text{corresponding Type $1_j$ to (b-$i$) as Type $2_j$}}~{\text{for each}}~i \in \{1, 2, 3, 4\},\\
S &\mapsto S~{\text{otherwise}}.  
\end{split}
\end{equation}

\subsection{Case of knots and the coefficient $\mathbb{Z}_2$}\label{cable_z_2}
In this section, we define the coboundary operator between tri-graded complexes.  We consider the simplest case: knots and the coefficient $\mathbb{Z}_2$ fixing an integer $n$ of the colored Jones polynomial $J_{n}$.  Let us recall the map $d'_{2}$ from a $k$-pairing $\textbf{s}$ to the summation of $(k+1)$-pairings $\sum \textbf{t}$ of $\Gamma_{n}^{k+1}(D; \mathbb{Z}_2)$.  

For an enhanced state $S$ and $k$-pairing $\textbf{s}$, we consider the tensor product $S \otimes \textbf{s}$ $\in$ $C^{i, j}(D^{n - 2k}; \mathbb{Z}_2) \otimes \Gamma_{n}^{k}(D; \mathbb{Z}_2)$.  In this section, an enhanced state of Type $1_j$ or Type $2_j$ is denoted by $\widetilde{S}$.  Set $C_{n}^{k, i, j}(D; \mathbb{Z}_2)$ $=$ $C^{i, j}(D^{n - 2k}; \mathbb{Z}_2) \otimes \Gamma_{n}^{k}(D; \mathbb{Z}_2)$.  

For $\widetilde{S}$ of Type $1_j$, if we delete all contracted circles of $\widetilde{S}$ of $C^{i, j}(D^{n - 2k}; \mathbb{Z}_2)$, we have an enhanced state $S$ of $C^{i, j}(D^{n - 2(k+1)}; \mathbb{Z}_2)$ (cf. Fig. \ref{cable-fig3}).  This deletion of contracted circles implies a homomorphism defined by $\widetilde{S}$ $\mapsto$ $S$ for $\widetilde{S}$: Type $1_j$ and $\widetilde{S}$ $\mapsto$ $0$ otherwise, which is denoted by $d'_1$.  We define the operator 
\[d'^{k, i, j}_{2} : C_{n}^{k, i, j}(D; \mathbb{Z}_2) \to C_{n}^{k+1, i, j}(D; \mathbb{Z}_2)\] by
\begin{equation}\label{z_2_diff_cable}
\begin{split}
\widetilde{S} \otimes \textbf{s} &\mapsto \rho d'_{1} f (\widetilde{S}) \otimes d'_{2}(\textbf{s})\quad{\text{if}}~\widetilde{S}{\text{ : Type}}~1_j~{\text{or (b-1)--(b-4) of Type}}~2_j, \\
\widetilde{T} \otimes \textbf{s} &\mapsto \rho d'_1 \rho(\widetilde{T}) \otimes d'_{2}(\textbf{s})\quad{\text{if}}~\widetilde{T}{\text{ : (b-5)--(b-8) of Type}}~2_j.\\
{\text{otherwise}} &\mapsto 0  
\end{split}
\end{equation}
where $d'_{2}(\textbf{s})$ is the map defined by (\ref{def_dif_cable}).  Here, we used the maps $C^{i, j}(S_{\rho}; \mathbb{Z}_2)$ $\simeq$ $C^{i, j}(D_{\infty 0}; \mathbb{Z}_2)$ $\hookrightarrow$ $C^{k+1, i, j}(D; \mathbb{Z}_2)$.  
\begin{proposition}\label{prop_diff}
$d'^{k+1, i, j}_{2} \circ d'^{k, i, j}_{2} = 0.  $
\end{proposition}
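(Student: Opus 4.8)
The plan is to exploit the tensor-product structure $C^{k, i, j}_{n}(D; \mathbb{Z}_2) = C^{i, j}(D^{n-2k}; \mathbb{Z}_2) \otimes \Gamma_{n}^{k}(D; \mathbb{Z}_2)$ and to reduce the claim to the identity $d'_{2}\circ d'_{2}=0$ for pairings, which is Proposition \ref{graph_knot_pro}. First I would read off the three cases of (\ref{z_2_diff_cable}) and observe that the operator $d'^{k, i, j}_{2}$ never mixes the two tensor factors: on the $\Gamma$-factor it always applies the pairing differential $d'_{2}$ of (\ref{def_dif_cable}), while on the enhanced-state factor it applies a single linear map, which I will call $\Phi_{k}$, equal to $\rho d'_{1} f$ on Type $1_{j}$ and on the panels (b-1)--(b-4), equal to $\rho d'_{1}\rho$ on the panels (b-5)--(b-8), and equal to $0$ otherwise. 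Hence I claim the factorization $d'^{k, i, j}_{2}=\Phi_{k}\otimes d'_{2}$.

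The key point I must justify is that this factorization is legitimate. The type of a generator $\widetilde{S}$ (Type $1_{j}$, one of (b-1)--(b-8) of Type $2_{j}$, or otherwise) is a property of the enhanced state $\widetilde{S}\in C^{i, j}(D^{n-2k}; \mathbb{Z}_2)$ alone and does not involve the pairing $\textbf{s}$. Consequently $\Phi_{k}$ extends linearly to a well-defined homomorphism $C^{i, j}(D^{n-2k}; \mathbb{Z}_2)\to C^{i, j}(D^{n-2(k+1)}; \mathbb{Z}_2)$, whose image sits in the correct group through the identifications $C^{i, j}(S_{\rho}; \mathbb{Z}_2)\simeq C^{i, j}(D_{\infty 0}; \mathbb{Z}_2)\hookrightarrow C^{k+1, i, j}_{n}(D; \mathbb{Z}_2)$ recorded after (\ref{z_2_diff_cable}). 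In each nonzero case the $\Gamma$-factor receives exactly $d'_{2}(\textbf{s})$, and in the remaining (``otherwise'') case the whole generator maps to $0=0\otimes d'_{2}(\textbf{s})$, so the formula $d'^{k, i, j}_{2}(\widetilde{S}\otimes\textbf{s})=\Phi_{k}(\widetilde{S})\otimes d'_{2}(\textbf{s})$ holds uniformly on generators and therefore as maps.

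Granting the factorization, the conclusion is immediate from the interchange law for tensor products of homomorphisms:
\[
d'^{k+1, i, j}_{2}\circ d'^{k, i, j}_{2}=(\Phi_{k+1}\otimes d'_{2})\circ(\Phi_{k}\otimes d'_{2})=(\Phi_{k+1}\circ\Phi_{k})\otimes(d'_{2}\circ d'_{2})=(\Phi_{k+1}\circ\Phi_{k})\otimes 0=0,
\]
where $d'_{2}\circ d'_{2}=0$ on $\Gamma_{n}^{k}(D; \mathbb{Z}_2)$ is Proposition \ref{graph_knot_pro}. I emphasize that nothing needs to be known about the composite $\Phi_{k+1}\circ\Phi_{k}$ of the enhanced-state maps: whatever it is, tensoring it with the zero map on the $\Gamma$-factor annihilates it, so the whole subtlety of the $\rho$, $f$, $d'_{1}$ bookkeeping is irrelevant to this particular identity.

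The main obstacle is therefore confined to the factorization step, not the final algebra. Concretely, I will have to verify that in each of the three cases of (\ref{z_2_diff_cable}) the pairing $\textbf{s}$ is acted on only through $d'_{2}$ and that the enhanced-state operation is insensitive to $\textbf{s}$ --- that is, that the ``otherwise'' alternative is triggered by the enhanced-state type alone rather than by any feature of the pairing, so that the two factors genuinely decouple. Once this is confirmed by rereading the cases of (\ref{z_2_diff_cable}) alongside the definitions of $\rho$, $f$, and $d'_{1}$, the proposition reduces cleanly to $d'_{2}\circ d'_{2}=0$ on pairings.
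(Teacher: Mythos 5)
Your proof is correct and is essentially the paper's own argument in cleaner packaging: the case analysis in the paper (on whether $S$, and then $g_1(S) = mT$, is Type $1_j$, Type $2_j$, or neither) is exactly your map $\Phi_k$, and the paper's final line $g_2 g_1(S) \otimes {d'_2}^2(\textbf{s}) = 0$ is your interchange-law computation, both resting on Proposition \ref{graph_knot_pro}. Since the type conditions in (\ref{z_2_diff_cable}) depend only on the enhanced-state factor, your factorization $d'^{k,i,j}_2 = \Phi_k \otimes d'_2$ is legitimate, and the two proofs coincide in substance.
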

\begin{proof}
Let us show that $d'^{k+1, i, j}_2 (d'^{k, i, j}_{2} (S \otimes {\textbf{s}}))$ $=$ $0$.  If $S$ is neither Type $1_j$ nor Type $2_j$, $d'^{k+1, i, j}_2 (d'^{k , i, j}_2 (S \otimes {\textbf{s}}))$ $=$ $d'^{k+1, i, j}_2 (0)$ $=$ $0$.  If $S$ is Type $1_j$ or Type $2_j$, we have
\begin{equation}\label{eq1}
{d'_2}^{k+1, i, j} (d'^{k, i, j}_2 (S)) = {d'_2}^{k+1, i, j} (g_1 (S) \otimes d'_{2}(\textbf{s}))
\end{equation}
where $g_1$ $=$ $\rho d_1' f$ or $\rho d_1' \rho$.  By the definition of $g_1$, $g_1 (S)$ $=$ $m T$, where $m$ is a nonnegative integer and $T$ is an enhanced state of $C^{i, j}(D_{\infty 0}; \mathbb{Z}_2)$ $\simeq$ $C^{i, j}(S_{\rho}; \mathbb{Z}_2)$.  If $T$ is neither Type $1_j$ nor Type $2_j$ or $m$ $=$ $0$, the right-hand side of (\ref{eq1}) is $0$.  If $m$ $\neq$ $0$ and $T$ is either Type $1_j$ or Type $2_j$, the right-hand side of (\ref{eq1}) is $g_2 g_1 (S) \otimes {d'_{2}}^{2}(\textbf{s})$, where $g_2$ is $\rho d_1' f$ or $\rho d_1' \rho$.  By Proposition \ref{graph_knot_pro}, we have ${d'_{2}}^{2}$ $=$ $0$, and then, 
\[{d'_2}^{k+1, i, j} (d'_1 g_1 (S) \otimes d'_2(\textbf{s})) = g_2 g_1 (S)  \otimes {d'_{2}}^{2}(\textbf{s}) = 0.  \]
\end{proof}
\begin{figure}
\begin{picture}(0,0)
\put(-5,160){$S_{1}$}
\put(0,285){$\widetilde{S_1}$}
\put(28,285){\footnotesize$1$}
\put(48,275){\footnotesize$x$}
\put(73.5,262){\footnotesize$x$}
\put(85,285){\footnotesize$1$}
\put(190,160){$S_{2}$}
\put(218,280){\footnotesize$1$}
\put(238,272){\footnotesize$1$}
\put(255,259){\footnotesize$x$}
\put(310,283.5){\footnotesize$x$}
\put(140,270){$\otimes$}
\put(335,270){$\otimes$}
\put(190,285){$\widetilde{S_2}$}
\put(33,161){\footnotesize$x$}
\put(63.5,183){\footnotesize$1$}
\put(263,161){\footnotesize$1$}
\put(278,161){\footnotesize$x$}
\put(33,47){\footnotesize$x$}
\put(63,65){\footnotesize$1$}
\put(263,55){\footnotesize$1$}
\put(280,53){\footnotesize$x$}
\put(140,160){$\otimes$}
\put(335,160){$\otimes$}
\put(-5,50){$S_{1}$}
\put(140,50){$\otimes$}
\put(190,50){$S_{2}$}
\put(335,50){$\otimes$}
\end{picture}
\includegraphics[width=13cm,bb=0 0 948 1104.91]{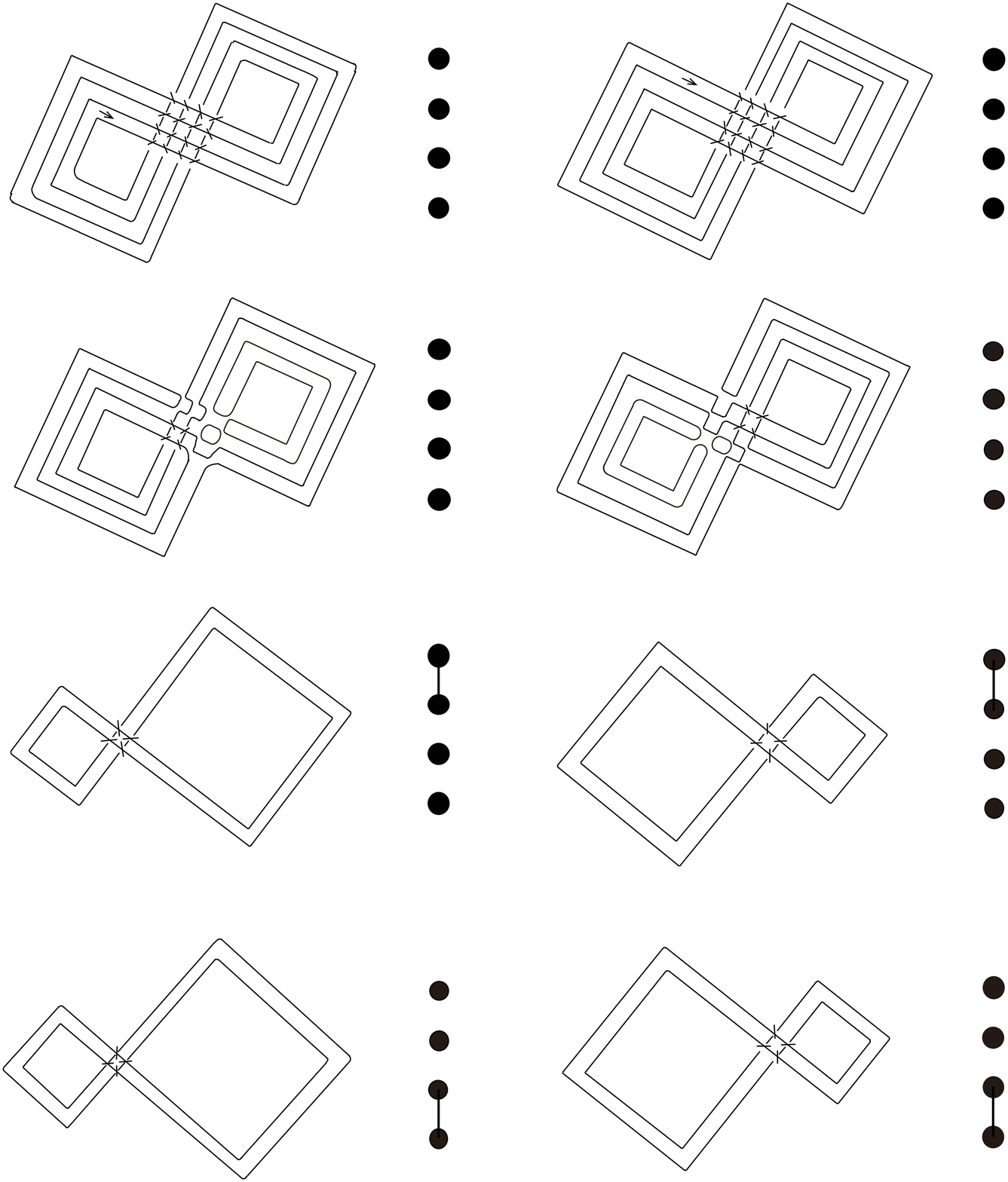}
\caption{Examples of Type $1$ are on the first line.  On the second line, $\widetilde{S_1} \otimes \textbf{s}$ and $\widetilde{S_2} \otimes \textbf{s}$ are enhanced states of Type $1_j$ in $C^{0, 0, 0}(D)$.  On the third line, $S_1 \otimes \textbf{t}$ and $S_2 \otimes \textbf{t}$ in $C^{1, 0, 0}(D)$.  On the fourth line, $S_1 \otimes \textbf{t'}$ and $S_2 \otimes \textbf{t'}$ in $C^{1, 0, 0}(D)$.}\label{shaded}
\end{figure}

\subsection{Case of links and the coefficient $\mathbb{Z}$}\label{cable_z}
We now extend the discussion of Sec. \ref{cable_z_2} to the case of links and the coefficient $\mathbb{Z}$ fixing a tuple of nonnegative integers $\textbf{n}$ of the colored Jones polynomial $J_{\textbf{n}}$.  To extend our argument to $\mathbb{Z}$, we must fix the order of negative markers of Type $1_j$, as we hope to define such a map as Formula (\ref{z_2_diff_cable}).  Markers are placed on a cable of a link diagram, as in Fig. \ref{strand}, where they depend on the directions of contracted strands.  
\begin{figure}[h!]
\begin{center}
\begin{minipage}{60pt}
\begin{picture}(0,80)
\put(20,0){(a)}
{\qbezier(16,54)(20,50)(24,46)}
\qbezier(36,46)(40,50)(44,54)
\put(0,50){\line(1,0){60}}
\put(20,20){\line(0,1){25}}
\put(40,20){\line(0,1){25}}
\put(20,55){\line(0,1){25}}
\put(40,55){\line(0,1){25}}
\put(30,75){\vector(0,-1){15}}
\end{picture}
\end{minipage}
\quad
\begin{minipage}{60pt}
\begin{picture}(0,80)
\qbezier(16,54)(20,50)(24,46)
\qbezier(36,46)(40,50)(44,54)
\put(23,0){(b)}
\put(0,50){\line(1,0){15}}
\put(25,50){\line(1,0){10}}
\put(45,50){\line(1,0){15}}
\put(20,20){\line(0,1){60}}
\put(40,20){\line(0,1){60}}
\put(30,75){\vector(0,-1){15}}
\end{picture}
\end{minipage}
\quad
\begin{minipage}{60pt}
\begin{picture}(0,80)
\qbezier(17,64)(21,60)(25,56)
\qbezier(17,36)(21,40)(25,44)
\qbezier(36,56)(40,60)(44,64)
\qbezier(36,44)(40,40)(44,36)
\put(20,0){(c)}
\put(0,40){\line(1,0){60}}
\put(0,60){\line(1,0){60}}
\put(20,20){\line(0,1){15}}
\put(40,20){\line(0,1){15}}
\put(20,45){\line(0,1){10}}
\put(40,45){\line(0,1){10}}
\put(20,65){\line(0,1){15}}
\put(40,65){\line(0,1){15}}
\put(30,80){\vector(0,-1){10}}
\end{picture}
\end{minipage}
\quad
\begin{minipage}{60pt}
\begin{picture}(0,80)
\qbezier(17,64)(21,60)(25,56)
\qbezier(17,36)(21,40)(25,44)
\qbezier(36,56)(40,60)(44,64)
\qbezier(36,44)(40,40)(44,36)
\put(23,0){(d)}
\put(20,20){\line(0,1){60}}
\put(40,20){\line(0,1){60}}
\put(0,40){\line(1,0){15}}
\put(0,60){\line(1,0){15}}
\put(25,40){\line(1,0){10}}
\put(25,60){\line(1,0){10}}
\put(45,40){\line(1,0){15}}
\put(45,60){\line(1,0){15}}
\put(30,80){\vector(0,-1){10}}
\end{picture}
\end{minipage}
\end{center}
\caption{(a), (b): Two crossings generated by two contracted strands and one non-contracted strand.  (c), (d): Four crossings generated by contracted strands only.}\label{strand}
\end{figure}
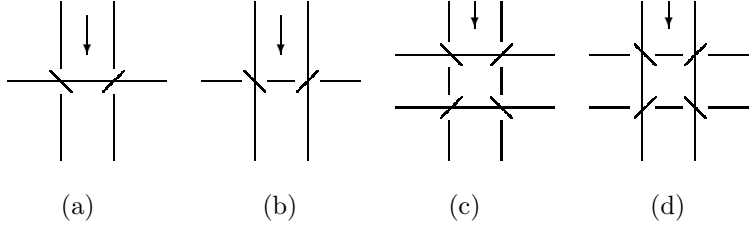  
\begin{definition}(The order of negative markers of Type $1_j$)\label{alternative_marker}
Embed a cable of a link diagram in $\mathbb{R}^{2}$ such that there is only one maximum point along the vertical axis (deform the diagram if necessary).  Let this maximum point be the base point and let the direction of the contracted strands correspond to the orientation of the strand of the lower dot.  Let $y$ be the word corresponding to alternate negative markers along the direction of the contracted strands, starting from the base point.  Let $x$ be an arbitrary word consisting of the other negative markers.  We permit only the word $xy$ to show the order and orientation of negative markers for {\it{the orientation of negative markers of Type $1_j$}}.  
\end{definition}
Note that either the right or left crossing has a negative marker, as in Figs. \ref{strand} (a)--(d), when we proceed along contracted strands and encounter another strand.  
\begin{remark}
By the above definition, the homology groups $H^{k, i, j}((D))$ of Def. \ref{tri_homology}, which are equivalent to homology groups $H^{k}(H^{i}(C_{\textbf{n}}^{*, *, j}(D)))$ of Theorem \ref{bicomplex_theorem}, do not depend on the choice of the base point for the following reason.  The fixing orientation depends on the fixing signs of enhanced states for Type $1_j$; then, the difference is $-d'$ or $d'$ on every $C_{\textbf{n}}^{k, i, j}(\cdot)$, and this sign does not depend on $(k, i, j)$ because the number of contracted strands is always even.  
\end{remark}
Here, we define {\it{$\oplus$-terms}} and {\it{$\ominus$-terms}}.  
\begin{definition}\label{plus_term}
Let us consider an enhanced state $S$ of Type $1_j$ and Type $2_j$.  For the $k$th panel corresponding to either (a-$i$) or (b-$j$) ($1 \le i, j \le 4$), we consider its sign $\epsilon_{k}$ $=$ $1$ ($-1$) if the $\oplus$ ($\ominus$) is marked in Tables \ref{pre_type1_table1}--\ref{pre_type1_table16}.  The enhanced state $S$ is called the {\it{$\oplus$-term}} ({\it{$\ominus$-term}}) if the product $\prod_{k} \epsilon_k$ $=$ $1$ ($=$ $-1$), where the product is taken over all parts (a-$i$) or (b-$j$) ($1 \le i, j \le 4$) of the contracted strands.  
\end{definition}

Assume that $D$ is an oriented $l$-component link diagram.  Let $k_i$ be the number of $i$-th edges in the vertical line of the pairing $\textbf{s}$, $\textbf{k}$ $=$ $(k_1, k_2, \dots, k_l)$, $|{\textbf{k}}|$ $=$ $\sum_{i=1}^{l} k_i$, and let $I_k$ be the set of $k$-pairings, where ``$k$-pairings'' is defined as pairings with $|\textbf{k}|$ $=$ k, see Sec. \ref{extend_link}.  Set $C^{k, i, j}_{\textbf{n}}(D)$ $=$ $\oplus_{\textbf{s} \in I_k} C^{i, j}(D^{\textbf{n} - 2 \textbf{k}}) \otimes \Gamma_{\textbf{n}}^{k}(D)$ and recall the map $d' : \Gamma_{\textbf{n}}^{k}(D)$ $\to$ $\Gamma_{\textbf{n}}^{k+1}(D)$ defined by (\ref{z_pairing_dif}) in Sec. \ref{extend_link}.  Let $S$ be an enhanced state formed by deleting contracted circles from $\widetilde{S}$.  This deletion implies the homomorphism defined by $\widetilde{S}$ $\mapsto$ $S$ for $\widetilde{S}$ : Type $1_j$ and $\widetilde{S}$ $\mapsto$ $0$ otherwise, which is denoted by $d'_1$.  We define the operator
\[d'^{k, i, j} : C^{k, i, j}_{\textbf{n}}(D) \to C^{k+1, i, j}_{\textbf{n}}(D)\]
as
\begin{equation}
\begin{split}
\widetilde{S} \otimes \textbf{s} &\mapsto \epsilon \rho {{d'}_1} f(\widetilde{S}) \otimes d'(\textbf{s})\quad {\text{if}}~\widetilde{S} : \oplus{\text{-term or }}\ominus{\text{-term}}, \\
\widetilde{T} \otimes \textbf{s} &\mapsto \rho {{d'}_1} \rho(\widetilde{T}) \otimes d'(\textbf{s})\quad {\text{if}}~\widetilde{T} : {\text{(b-5)--(b-8) of Type }}2_j~{\text{and}}, \\
{\text{otherwise}} &\mapsto 0  
\end{split}
\end{equation} 
where $\epsilon$ $=$ $1$ ($-1$) if $\widetilde{S}$ is $\oplus$-term ($\ominus$-term).  Here, we used the following maps $C^{i, j}(S_{\rho})$ $\simeq$ $C^{i, j}(D_{\infty 0})$ $\hookrightarrow$ $C^{|{\textbf{k}}|+1, i, j}$.  
\begin{proposition}
$d'^{k+1, i, j} \circ d'^{k, i, j} = 0.$
\end{proposition}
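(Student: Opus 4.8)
The plan is to mimic the factorization argument of Proposition~\ref{prop_diff}, now carrying the $\mathbb{Z}$-signs. Since the generators $\widetilde{S}\otimes\textbf{s}$ span $C^{k,i,j}_{\textbf{n}}(D)$, it suffices to show $d'^{k+1,i,j}\bigl(d'^{k,i,j}(\widetilde{S}\otimes\textbf{s})\bigr)=0$ for each of them. The structural observation driving everything is that each clause in the definition of $d'^{k,i,j}$ has the tensor form (an operator on the enhanced-state slot) $\otimes\, d'$ (the signed pairing differential of~(\ref{z_pairing_dif}) on the pairing slot), where the enhanced-state operator is $\epsilon\,\rho d'_1 f$ or $\rho d'_1\rho$. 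Crucially, the enhanced-state factor $\rho d'_1 f(\widetilde{S})$ (resp.\ $\rho d'_1\rho(\widetilde{T})$) does not depend on which edge is added to $\textbf{s}$, and the sign $\epsilon\in\{\pm1\}$ depends, by Definition~\ref{plus_term}, only on $\widetilde{S}$ through the product $\prod_k\epsilon_k$ over its contracted-strand panels. Thus $d'^{k,i,j}$ genuinely factors as a tensor product of a map on enhanced states and the map $d'$ on pairings.

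If $\widetilde{S}$ is neither a $\oplus$- nor a $\ominus$-term, nor one of the panels (b-5)--(b-8) of Type~$2_j$, then $d'^{k,i,j}(\widetilde{S}\otimes\textbf{s})=0$ and there is nothing to prove. Otherwise write $d'^{k,i,j}(\widetilde{S}\otimes\textbf{s})=\epsilon\,g_1(\widetilde{S})\otimes d'(\textbf{s})$ with $g_1\in\{\rho d'_1 f,\ \rho d'_1\rho\}$. As in the $\mathbb{Z}_2$ case, $g_1(\widetilde{S})$ is a $\mathbb{Z}$-linear combination of enhanced states on the smaller cable $D^{\textbf{n}-2(\textbf{k}+1)}$, each handled independently by the definition of $d'^{k+1,i,j}$ according to its type, and the sign that $d'^{k+1,i,j}$ attaches to it depends only on that state, not on the pairing $\textbf{t}$ appearing in $d'(\textbf{s})$. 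Since the enhanced-state factor is therefore the same for every $\textbf{t}$, applying $d'^{k+1,i,j}$ and collecting the pairing slots yields a first factor $g_2 g_1(\widetilde{S})$ and a second factor $d'\bigl(d'(\textbf{s})\bigr)={d'}^2(\textbf{s})$.

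By the Proposition asserting ${d'}^2=0$ on pairings (equation~(\ref{dif3})), the second factor vanishes; hence every term of $d'^{k+1,i,j}\bigl(d'^{k,i,j}(\widetilde{S}\otimes\textbf{s})\bigr)$ is zero. Extending linearly over all generators gives $d'^{k+1,i,j}\circ d'^{k,i,j}=0$.

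The main obstacle is the sign bookkeeping that legitimizes the tensor factorization, which over $\mathbb{Z}$ is no longer automatic. Two points must be confirmed: first, that the global sign $\epsilon$ of each enhanced-state clause is a function of the enhanced state alone and not of the accompanying pairing---this is exactly what Definition~\ref{plus_term} guarantees, the coherence across the three gradings resting on the evenness of the contracted part (Proposition~\ref{contracted_even}); and second, that summing over the intermediate pairings reproduces the honest incidence signs $(\textbf{s}:\textbf{t})$ of~(\ref{z_pairing_dif}), so that the pairing slot of the composition is precisely ${d'}^2$ rather than a sign-twisted variant. The well-definedness of $g_1$'s signs needed for this rests in turn on the fixed order of negative markers of Definition~\ref{alternative_marker}. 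Once these are in place, the vanishing is inherited verbatim from ${d'}^2=0$.
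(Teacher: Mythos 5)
Your proposal is correct and follows essentially the same route as the paper's own proof: reduce to generators $\widetilde{S}\otimes\textbf{s}$, observe that $d'^{k,i,j}$ factors (with a sign depending only on the enhanced-state slot) as $\epsilon\,g_1(\widetilde{S})\otimes d'(\textbf{s})$ with $g_1\in\{\rho d'_1 f,\ \rho d'_1\rho\}$, so that the composition becomes $\zeta\epsilon\,g_2 g_1(\widetilde{S})\otimes {d'}^{2}(\textbf{s})$, which vanishes by the pairing-level identity ${d'}^{2}=0$ of Proposition (\ref{dif3}). Your added remarks on the sign bookkeeping (that $\epsilon$ is determined by the enhanced state alone and that the pairing slot collects into an honest ${d'}^{2}$) only make explicit what the paper leaves implicit.
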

\begin{proof}
The discussion is similar to the proof of Prop. \ref{prop_diff}.  Let us show that ${d'}^{k+1, i, j}({d'}^{k, i, j}(S \otimes \textbf{s}))$ $=$ $0$.  If $S$ is neither Type $1_j$ nor Type $2_j$, ${d'}^{k+1, i, j}({d'}^{k, i, j}$ $(S \otimes \textbf{s}))$ $=$ ${d'}^{k+1, i, j}(0)$ $=$ $0$.  If $S$ is Type $1_j$ or Type $2_j$, 
\begin{equation}\label{proof_eq2}
d'^{k+1, i, j} (d'^{k, i, j}(S \otimes \textbf{s})) = d'^{k+1, i, j}(\eta g_1 (S) \otimes d'(\textbf{s}))  
\end{equation}
where $g_1$ $=$ $\rho d_1' f$ or $\rho d_1' \rho$ and $\eta$ $=$ $-1$ or $1$.  The $g_1 (S)$ is represented as $mT$, where $m$ is a nonnegative integer and $T$ is an enhanced state of $C^{i, j}(D_{\infty 0})$ $\simeq$ $C^{i, j}(S_{\rho})$.  If $T$ is neither Type $1_j$ nor Type $2_j$, the right-hand side of (\ref{proof_eq2}) is $0$.  If $m$ $\neq$ $0$ and $T$ is either Type $1_j$ or Type $2_j$, the right-hand side of (\ref{proof_eq2}) is $\zeta \eta g_2 g_1(S) \otimes d'^{2}(\textbf{s})$, where $g_2$ $=$ $\rho d_1' f$ or $\rho d_1' \rho$ and $\zeta$ $=$ $-1$ or $1$.  By Proposition \ref{dif3}, $d'^{2}$ $=$ $0$.  Then, 
\[d'^{k+1, i, j}(\eta g_1 (S) \otimes d'(\textbf{s})) = \zeta \eta g_2 g_1(S) \otimes d'^{2}(\textbf{s}) = 0. \]
\end{proof}

\section{Proof of Theorem \ref{bicomplex_theorem}}\label{proof_section}
Since it is easy for readers to reduce the case of coefficient $\mathbb{Z}_2$, we next provide the proof of Theorem \ref{bicomplex_theorem} in the case of the coefficient $\mathbb{Z}$.  

Assume that $D$ is an $l$-component link diagram.  We denote the $\mathbb{Z}$-module generated by a single element, i.e., pairing $\textbf{s}$, as $\langle \textbf{s} \rangle$.  By definition, there exists an integer $k$ such that $\langle \textbf{s} \rangle$ becomes a subgroup of $\Gamma_{\textbf{n}}^{k}(D)$.  Let us recall the differential $d : C^{i, j}(D^{\textbf{n} - 2 \textbf{k}})$ $\to$ $C^{i+1, j}(D^{\textbf{n} - 2 \textbf{k}})$ of the Khovanov homology defined in Sec. \ref{preliminary}.  For an arbitrary pairing $\textbf{s}$, we consider the homomorphism $C^{i, j}(D^{\textbf{n} - 2 \textbf{k}}) \otimes \langle \textbf{s} \rangle$ $\to$ $C^{i+1, j}(D^{\textbf{n} - 2 \textbf{k}}) \otimes \langle \textbf{s} \rangle$ defined by $S \otimes \textbf{s}$ $\mapsto$ $d(S) \otimes \textbf{s}$, which is denoted by $d^{i, j}_{\textbf{s}}$.  By this definition, $d^{i+1, j}_{\textbf{s}} \circ d^{i, j}_{\textbf{s}}$ $=$ $0$ since $d_{\textbf{s}}^{i+1, j}(d_{\textbf{s}}^{i, j} (S \otimes \textbf{s}))$ $=$ $d^{2}(S) \otimes \textbf{s}$ $=$ $0$.  

For the tuple of nonnegative integers $\textbf{k}$ $=$ $(k_1, k_2, \dots, k_l)$, let $I_{k}$ be the set of $|{\textbf{k}}|$-pairings, where each $k_i$ is the number of edges in the $i$-th vertical line of its pairing $\textbf{s}$.  The homomorphism $d''^{k, i, j}$ is defined by setting $d''^{k, i, j} :=$ $(-1)^{k} \oplus_{{\textbf{s}} \in I_{k}} d^{i, j}_{\textbf{s}}$ on $\oplus_{\textbf{s} \in I_k} C^{i, j}(D^{\textbf{n} - 2 \textbf{k}}) \otimes \textbf{s}$.  
\begin{lemma}
$d''^{k, i, j}$ is a coboundary operator on $C_{\textbf{n}}^{k, i, j}(D)$.  
\end{lemma}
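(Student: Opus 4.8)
The plan is to verify the two defining properties of a coboundary operator: that $d''^{k,i,j}$ raises the grading $i$ by one while leaving $k$ and $j$ fixed, and that two consecutive applications compose to zero. Both properties are inherited directly from the corresponding facts about the ordinary Khovanov differential $d$, since $d''$ is built from $d$ by tensoring with the identity on pairings and inserting the sign $(-1)^{k}$.

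First I would confirm that $d''^{k,i,j}$ is a well-defined homomorphism $C_{\textbf{n}}^{k,i,j}(D) \to C_{\textbf{n}}^{k,i+1,j}(D)$. Writing $C_{\textbf{n}}^{k,i,j}(D) = \oplus_{\textbf{s} \in I_k} C^{i,j}(D^{\textbf{n}-2\textbf{k}}) \otimes \textbf{s}$ and recalling that $d''^{k,i,j} = (-1)^{k} \oplus_{\textbf{s} \in I_k} d^{i,j}_{\textbf{s}}$ acts summand by summand, it suffices to note that $d^{i,j}_{\textbf{s}}(S \otimes \textbf{s}) = d(S) \otimes \textbf{s}$, where $d : C^{i,j}(D^{\textbf{n}-2\textbf{k}}) \to C^{i+1,j}(D^{\textbf{n}-2\textbf{k}})$ is the Khovanov differential of Sec. \ref{preliminary}. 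That differential raises $i$ by one and preserves $j$, and since $\textbf{s}$ is left untouched the cabling grading $k = |\textbf{k}|$ is unchanged; hence the image lies in $C_{\textbf{n}}^{k,i+1,j}(D)$. In particular no cross terms between distinct pairings arise, so the direct-sum structure is respected.

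Next I would compute the composite on a generator $S \otimes \textbf{s}$ with $\textbf{s} \in I_k$. The key point is that $d''$ preserves $k$, so both applications carry the same scalar $(-1)^{k}$, whose product is $(-1)^{2k} = 1$; thus
\[
d''^{k,i+1,j}\bigl(d''^{k,i,j}(S \otimes \textbf{s})\bigr) = (-1)^{2k}\, d\bigl(d(S)\bigr) \otimes \textbf{s} = d^{2}(S) \otimes \textbf{s} = 0,
\]
by Viro's theorem $d^{2} = 0$ recalled in Sec. \ref{z_homology}. Extending linearly over all generators and all $\textbf{s} \in I_k$ then yields $d''^{k,i+1,j} \circ d''^{k,i,j} = 0$.

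I do not anticipate a genuine obstacle here: the entire content of the lemma reduces to $d^{2}=0$ for ordinary Khovanov homology, and the only bookkeeping is to observe that the sign $(-1)^{k}$ — which is inserted so that $d''$ will later anticommute with $d'$ and produce a genuine bicomplex — is harmless for this statement because $k$ is constant along the $i$-direction. The one subtlety worth stating explicitly is that $d''$ acts diagonally on the direct sum indexed by $I_k$, so the verification really takes place within each single tensor factor $C^{i,j}(D^{\textbf{n}-2\textbf{k}}) \otimes \textbf{s}$, where it is nothing more than $d^{2}=0$.
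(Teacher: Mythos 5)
Your proposal is correct and follows essentially the same route as the paper: both arguments act summand by summand on $\oplus_{\textbf{s} \in I_k} C^{i,j}(D^{\textbf{n}-2\textbf{k}}) \otimes \langle \textbf{s} \rangle$, reduce the claim to $d^{2}=0$ for the ordinary Khovanov differential via $d^{i+1,j}_{\textbf{s}} \circ d^{i,j}_{\textbf{s}} = 0$, and observe that the sign $(-1)^{k}$ is harmless since $k$ is fixed. The only difference is cosmetic: you state explicitly that $(-1)^{2k}=1$, which the paper leaves implicit.
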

\begin{proof}
By definition, $d''^{k, i, j}$ is a homomorphism from $\oplus_{\textbf{s} \in I_k} C^{i, j}(D^{\textbf{n} - 2 \textbf{k}}) \otimes \langle \textbf{s} \rangle$ $\to$ $\oplus_{\textbf{s} \in I_k} C^{i+1, j}(D^{\textbf{n} - 2 \textbf{k}}) \otimes \langle \textbf{s} \rangle$.  Then, from the definitions of $\Gamma_{\textbf{n}}^{k}(D)$ and $C_{\textbf{n}}^{k, i, j}(D)$, we have the isomorphism and the equality: 
\[\oplus_{\textbf{s} \in I_k} C^{i, j}(D^{\textbf{n} - 2 \textbf{k}}) \otimes \langle \textbf{s} \rangle \simeq C^{i, j}(D^{\textbf{n} - 2 \textbf{k}}) \otimes \Gamma_{\textbf{n}}^{k}(D) \stackrel{\rm{\text{def}}}{=} C_{\textbf{n}}^{k, i, j}(D).\]
The homomorphism $(-1)^{k} \oplus_{\textbf{s} \in I_k} d_{\textbf{s}^{i, j}}$ is then homomorphism $C_{\textbf{n}}^{k, i, j}(D)$ $\to$ 

$C_{\textbf{n}}^{k, i+1, j}(D)$.  Noting that $d''^{k, i+1, j} \circ d''^{k, i, j}$ $=$ $\oplus_{\textbf{s} \in I_k} d_{\textbf{s}}^{i+1, j} \circ d_{\textbf{s}}^{i, j}$ $=$ $0$, $d''^{k, i, j}$ is a coboundary operator from $C_{\textbf{n}}^{k, i, j}(D)$ to $C_{\textbf{n}}^{k, i+1, j}$.  
\end{proof}

We now have two coboundary operators $d'^{k, i, j}$ (see Sec. \ref{def_diff_cable}) and $d''^{k, i, j}$ on $C_{\textbf{n}}^{k, i, j}(D)$.  For these differentials $d''^{k, i, j}$ and $d'^{k, i, j}$, $d''^{k+1, i, j} \circ d'^{k, i, j}$ $+$ $d'^{k, i+1, j} \circ d''^{k, i, j}$ $=$ $0$ (Prop. \ref{bicomplex_commute}).  This shows the existence of the desired bicomplex.  
\begin{proposition}\label{bicomplex_commute}
$d''^{k+1, i, j} \circ d'^{k, i, j}$ $+$ $d'^{k, i+1, j} \circ d''^{k, i, j}$ $=$ $0$.  
\end{proposition}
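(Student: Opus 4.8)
The plan is to reduce the claimed anticommutativity to the single statement that the cabling differential $d'$ is a chain map for the Khovanov differential, and then to verify that statement by a local analysis governed by Lemmas \ref{lemma_type1_lem1}--\ref{lemma_type1_lem3} and Proposition \ref{prop_rho2}.

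First I would strip off the signs. Writing $\delta''^{k,i,j} := \oplus_{\textbf{s} \in I_k} d^{i,j}_{\textbf{s}}$ for the unsigned fiberwise Khovanov differential, so that $d''^{k,i,j} = (-1)^{k}\,\delta''^{k,i,j}$, the identity to be proved becomes
\begin{equation*}
(-1)^{k+1}\,\delta''^{k+1,i,j} \circ d'^{k,i,j} + (-1)^{k}\,d'^{k,i+1,j} \circ \delta''^{k,i,j} = 0,
\end{equation*}
which, after dividing by $(-1)^{k}$, is exactly the commutativity
\begin{equation*}
\delta''^{k+1,i,j} \circ d'^{k,i,j} = d'^{k,i+1,j} \circ \delta''^{k,i,j}.
\end{equation*}
Thus the sign $(-1)^{k}$ built into the definition of $d''$ is precisely what converts a commuting square into an anticommuting one, and it remains to show that $d'$ commutes with the Khovanov differential applied in the $i$-direction.

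Next I would reduce this commutativity to a statement about the enhanced-state factor alone. On a generator $\widetilde{S} \otimes \textbf{s}$ the operator $d'$ acts as the signed map $\widetilde{S} \mapsto \epsilon\,\rho d'_1 f(\widetilde{S})$ (with its (b-5)--(b-8) analogue $\rho d'_1 \rho$) on the enhanced state, together with the pairing differential $d'$ of (\ref{z_pairing_dif}) on $\textbf{s}$, whereas $\delta''$ acts only on the state and leaves the pairing untouched. Since the Khovanov differential $d$ does not see the pairing, and since for each new edge of $\textbf{t}$ occurring in $d'(\textbf{s}) = \sum_{\textbf{t}} (\textbf{s}:\textbf{t})\,\textbf{t}$ the associated contraction of strands is fixed, the identity collapses, edge by edge, to the assertion that $d$ commutes with the signed enhanced-state operator $\epsilon\,\rho d'_1 f$ (and with $\rho d'_1 \rho$); the pairing factor merely transports the incidence signs $(\textbf{s}:\textbf{t})$, which attach identically to $\textbf{t}$ on both composites.

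To establish that $d$ commutes with $\epsilon\,\rho d'_1 f$ I would split $d$ according to the location of the crossing whose marker is switched. For a crossing on non-contracted strands the marker change is disjoint from the contracted region, so it commutes term by term with the deletion $d'_1$ of contracted circles and with $f$ (which edits only contracted panels), it commutes with $\rho$ by Proposition \ref{prop_rho2}, and it leaves the $\oplus$/$\ominus$ classification of Definition \ref{plus_term} unchanged; these terms therefore match exactly. The hard part is a crossing on the contracted strands, where a marker switch can carry a Type $1_j$ state out of its class: here one invokes Lemmas \ref{lemma_type1_lem1}--\ref{lemma_type1_lem3} to pin down which Type $1_j$ and Type $2_j$ panels actually appear in $d(\widetilde{S})$, the definitions of $f$, $\rho$, $d'_1$, and of Type $2_j$ having been arranged so that the images under $\rho d'_1 f$ of the unwanted terms cancel in pairs and the survivors reassemble into $d(\rho d'_1 f(\widetilde{S}))$. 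I expect the genuine obstacle to be the sign bookkeeping in this local step: one must check, using the $\oplus$/$\ominus$ signs and the fixed order of negative markers from Definition \ref{alternative_marker}, that the incidence signs produced by $d$ agree with those produced by $\epsilon\,\rho d'_1 f$ at every contracted crossing, leaving no residual sign. Once this is verified for each panel type (a-1)--(a-8), summing over all crossings gives the commutativity and hence the proposition.
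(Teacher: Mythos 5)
Your two opening reductions are correct and are, in substance, the paper's own first step: the factor $(-1)^{k}$ built into $d''^{k,i,j}$ converts the claimed anticommutativity into commutativity of the unsigned fiberwise Khovanov differential with $d'$, and since $d''$ never acts on the pairing factor while the incidence sign $({\textbf{s}}:{\textbf{t}})$ enters both composites identically, everything collapses to an identity on the enhanced-state factor (the paper phrases this as: it suffices to prove $d''d'S + d'd''S = 0$ for a single enhanced state $S$). The gap is everything after that. The assertion that $d$ commutes with $\epsilon\,\rho d_1' f$ and $\rho d_1' \rho$ \emph{is} the content of Proposition \ref{bicomplex_commute}, and your sketch of it both defers the verification and mislocates the difficulty. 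For a Type $1_j$ source, a marker switch at a contracted crossing is the easy part: the resulting state is neither Type $1_j$ nor Type $2_j$, so $d'$ annihilates it outright (the paper's Case 2). The genuinely delicate case is the one your plan passes over: a source $S$ that is \emph{neither} Type $1_j$ nor Type $2_j$, so that $d'S = 0$, while $dS$ does contain Type $1_j$ and Type $2_j$ terms. Commutativity then forces $d'(dS) = 0$, which holds not term by term but by cancellation: Lemmas \ref{lemma_type1_lem1}--\ref{lemma_type1_lem3} guarantee that each (a-$i$) term of $dS$ is accompanied by a matching (b-$j$) term with the same coefficient, so that under the $\oplus$/$\ominus$ sign rule of Definition \ref{plus_term} and the map $f$ their images under $d'$ cancel in pairs, while the panels (a-5)--(b-8) are disposed of not by the tables at all but by the identity $\rho \circ d = 0$ of (\ref{map_rho_d}). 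Note also that you invoke Lemmas \ref{lemma_type1_lem1}--\ref{lemma_type1_lem3} to describe $d(\widetilde{S})$ for $\widetilde{S}$ of Type $1_j$; they say nothing about that. They describe $d(S)$ for the all-positive-marker states of Figs.\ \ref{pre_type1}, \ref{pre_type1b}, and \ref{pre_type1a}, i.e., precisely the states feeding the cancellation just described.

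Even granting all of that, the Type $2_j$ sources cannot be settled by sign bookkeeping at each contracted crossing, panel by panel. For $S$ of type (b-1)--(b-4), $d'S = \epsilon\,\rho d_1' f S$ involves the replacement map $f$, and comparing $d''d'S$ with $d'd''S$ requires the intertwining identity $d_1' d'' f S = d_1' f d'' S$, which the paper must isolate as a separate global statement (Lemma \ref{preserve_lemma2}) about how $d''$ interacts with the (a-$i$) $\leftrightarrow$ (b-$i$) replacement; for (b-5)--(b-8) one further needs the decomposition $\rho = \rho_1 + \rho_2$ together with the chain-map property (\ref{chain_rho}) of Proposition \ref{prop_rho2}. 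This is why the workable organization is the paper's case distinction on the type of the \emph{source} state (neither type; Type $1_j$; (b-1)--(b-4); (b-5)--(b-8)), with your ``non-contracted crossings commute'' observation appearing inside each case as the splitting $d'' = d_1'' + d_2''$. As written, your proposal establishes the (correct) reduction but leaves the core identity as an unverified expectation.
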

\begin{proof}
Let $S$ be an enhanced state and $\textbf{s}$ be a pairing.  The set of generators of $C^{k, i, j}(D)$ is taken as $\{S \otimes \textbf{s}\}$.  

In only this proof, we use the notations $d'(S)$ and $d''(S)$.  By the definitions of $d'^{k, i, j}$ and $d''^{k, i, j}$, $d'^{k, i, j}(S \otimes \textbf{s})$ can be written as $d'(S) \otimes d'(\textbf{s})$, and $d''(S \otimes \textbf{s})$ can be written as $d''(S) \otimes d'(\textbf{s})$, where $d'(\textbf{s})$ is as defined in (\ref{z_pairing_dif}) in Sec. \ref{extend_link}.  Using this notation, we notice that it is sufficient to show that $d'' d' S + d' d'' S$ $=$ $0$ for an enhanced state $S$, which is a generator of $C^{i, j}(D^{\textbf{n} - 2 \textbf{k}})$ with $|{\textbf{k}}|$ $=$ $k$.  This is because $(d''^{k+1, i, j} \circ d'^{k, i, j} + d'^{k, i+1, j} \circ d''^{k, i, j})(S \otimes \textbf{s})$ $=$ $(d'' \circ d' + d' \circ d'')(S) \otimes d'(\textbf{s})$.  

We first look at an enhanced state $S$ to prove the relation $d'' d' S$ $+$ $d' d'' S$ $=$ $0$ by considering the following three cases: 

Case 1: $S$ is neither Type $1_j$ nor $2_j$.  In this case, the definition of $d'$ implies $d'' d' S$ $=$ $0$.  By looking at the definition of Type $1_j$ and Type $2_j$, we can check that $d'' S$ is the sum of enhanced states of the form $d'' S$ $=$ $\sum_{m} \pm S_{0, m} + \sum_n \pm (S_{1, n} + S_{2, n})$ $+$ $\sum_{n'} (S_{1, n'} + S_{2, n'})$, where for each $n$ (for each $n'$) and $i \in \{5, 6, 7, 8\}$, $S_{1, n}$ ($S_{1, n'}$) is an enhanced state of $\oplus$-term ((a-$i$)) and $S_{2, n}$ ($S_{2, n'}$) is the enhanced state of $\ominus$-term ((b-$i$)), and for each $m$, $S_{0, m}$ is the enhanced state that is neither Type $1_j$ nor Type $2_j$.  Again using the definition of $d'$ and setting $T_{1, n}$ $=$ ${d'}(S_{1, n})$, we obtain $d'(\sum_n (S_{1, n} + S_{2, n}))$ $=$ $\sum_n$ $(T_{1, n} - T_{1, n})$ $=$ $0$, as seen in Lemmas \ref{lemma_type1_lem1}--\ref{lemma_type1_lem3}.  On the other hand, for $S_{1, n'}$, there exists an enhanced state $U$ (see Fig. \ref{pre_type12}) such that $d''(U)$ $=$ $S_{1, n'}$ $+$ $S_{2, n'}$ $+$ $\sum_{m'} \pm S_{0, m'}$.  Using the definition of $\rho$ and (\ref{map_rho_d}), we get $\rho(S_{1, n'} + S_{2, n'})$ $=$ $\rho(d''(U))$ $=$ $0$.  Then, using the definition of $d'$, we get $d'(\sum_{n'} (S_{1, n'} + S_{2, n'}))$ $=$ $\pm \rho\left(\sum_{n'} {d'_{1}}(S_{1, n'}) + {d'_{1}}(\rho(S_{2, n'})) \right)$, where $d'_1$ is defined as in Sec. \ref{cable_z}.  We can see that $\rho d'_{1} \rho(S_{1, n'})$ $=$ $\rho d'_{1}(S_{1, n'})$, and so, $\rho \left({d'_{1}}(S_{1, n'}) + {d'_{1}}(\rho(S_{2, n'})) \right)$ $=$ $\rho {d'_{1}}(\rho(S_{1, n'} + S_{2, n'}))$ $=$ $\rho {d'_{1}}(\rho(S_{1, n'} + S_{2, n'} + \sum_{m'} S_{0, m'}))$ $=$ $\rho {d'_{1}}(\rho(d''(U)))$ $=$ $\rho {d'_{1}}(0)$ $=$ $0$.  By definition of $d'$, $d'(S_{0, m})$ $=$ $0$; thus, from the above discussion, $d' d'' S$ $=$ $d'(\sum_{m} \pm S_{0, m}$ $+$ $\sum_{n} \pm (S_{1, n}$ $+$ $S_{2, n})$ $+$ $\sum_{n'} \pm (S_{1, n'}$ $+$ $S_{2, n'}))$ $=$ $0$.  

Case 2: $S$ is Type $1_j$.  In this case, we can write $d'' S$ as the sum $d'' S$ $=$ $d_1'' S$ $+$ $d_2'' S$ ($\sharp$), where $d_1'' S$ consists of all terms in $d'' S$ that are obtained from $S$ by changing the resolution of a crossing between two non-contracted strands, and $d''_{2} S$ consists of all other terms.  Since $S$ is Type $1_j$, none of the terms in $d_2'' S$ can be Type $1_j$ or $2_j$, and hence, $d' d_2'' S$ $=$ $0$.  On the other hand, because $d'$ is given by removing contracted circles (and multiplying by a sign), we have $d' d_1'' S$ $=$ $- d'' d' S$ (since $d''$ preserves the sign $\prod_k \epsilon_k$ of Type $1_j$ defined in Def. \ref{plus_term} and the orientation of negative markers of Type $1_j$ was fixed (Def. \ref{alternative_marker})).  Hence, $d' d'' S$ $=$ $- d'' d' S$.  

Case 3: $S$ is (b-1)--(b-4) of Type $2_j$.  In this case, the definition of $d'$ implies that $d' S$ $=$ $\epsilon \rho d'_1 f S$, where $\epsilon$ is $1$ ($-1$) if $S$ is $\oplus$-term ($\ominus$-term).  First, we consider $S$ to be $\oplus$-term of Type $2_j$.  In this case, $f S$ is $\ominus$-term (see Tables \ref{pre_type1_table1}--\ref{pre_type1_table12}).  Before we show $d'' d' S + d' d'' S$ $=$ $0$, we define some symbols.  We can write $d'' S$ as the sum $d'' S$ $=$ $d_2'' S$ $+$ $d_0'' S$ ($\star$), where $d_2'' S$ consists of all terms in $d'' S$ that are obtained from $S$ by changing the resolution of a crossing between two non-contracted strands, and $d''_0 S$ consists of all other terms.  Since $S$ is (b-1)--(b-4) of Type $2_j$, $d''_2 S$ is also (b-1)--(b-4) of Type $2_j$ and none of the terms in $d_0'' S$ can be Type $1_j$ or $2_j$, and hence, $d' d_0'' S$ $=$ $0$ ($\star\star$).  We now start to show the desired result.  
\begin{equation}
\begin{split}
d'' d' S &= d'' \rho d'_1 f S \quad {\text{(definition of $d'$)}}\\ 
&= - d'' d' (f S) \quad {\text{($fS$ : $\ominus$-term and Type $1_j$)}}\\
&= d' d'' (f S) \quad {\text{(Case 2)}}\\
&= d' (d''_1 (f S)) \quad {\text{(Case 2)}}\\ 
&= - \rho d'_1 f (d''_1 f S) \quad {\text{($d''_1 f S$ : $\ominus$-term and Type $1_j$)}}\\
&= - \rho d'_1 (d''_1 f S) \quad {\text{(definition of $f$)}}\\
&= - \rho d'_1 (d'' f S) \quad {\text{(($\sharp$) and definition of $d'_{1}$)}}\\
&= - \rho d'_1 (f d'' S) \quad {\text{(Lemma \ref{preserve_lemma2})}}\\
&= - \rho d'_1 f (d''_2 S + d''_0 S) \quad {\text{(using ($\star$))}}\\
&= - \rho d'_1 (f d''_2 S + d''_0 S)  \quad {\text{(using the property $f$)}} \\
&= - d' d'' S \quad {\text{($d'' S$ : $\oplus$-terms which is survived by $d'$) and ($\star\star$)}}.\\
\end{split}
\end{equation}
Next, we consider the case in which $S$ is $\ominus$-term.  In this case, $f S$ is $\oplus$-term.  
\begin{equation}
\begin{split}
d'' d' S &=  - d'' \rho d'_1 f S \quad {\text{(definition of $d'$)}} \\
&= - d'' d' (f S) \quad {\text{($f S$ : $\oplus$-term and Type $1_j$)}}\\
&= d' d'' (f S) \quad {\text{(Case 2)}}\\
&= d' d''_1 (f S) \quad {\text{(Case 2)}}\\
&= \rho d'_1 f (d''_1 f S) \quad {\text{($d''_1 f S$ : $\oplus$-term and Type $1_j$)}}\\
&= \rho d'_1 (d''_1 f S) \quad {\text{(definition of $f$)}}\\
&= \rho d'_1 (d'' f S) \quad {\text{($\sharp$) and definition of $d'_1$}}\\
&= \rho d'_1 (f d'' S) \quad {\text{(Lemma \ref{preserve_lemma2})}}\\
&= \rho d'_1 f (d''_2 S + d''_0 S) \quad {\text{(using ($\star$))}}\\
&= \rho d'_1 (f d''_2 S + d''_0 S) \quad {\text{(using the property $f$)}}\\
&=  - d' d'' S \quad {\text{($d'' S$ : $\ominus$-terms will be survived by $d'$ and ($\star\star$))}}.
\end{split}
\end{equation}

Case 4: $S$ is (b-5)--(b-8) of Type $2_j$.  Before we begin the proof of $d' d'' S + d'' d' S$ $=$ $0$, we define some additional symbols.  By the property of $\rho$, we can write $\rho S$ as the sum $\rho S$ $=$ $\rho_1 S$ $+$ $\rho_2 S$ ($\diamond$), where $\rho_1 S$ consists of all terms in $\rho S$ that Type $1_j$ in $\oplus_{i, j} C^{i, j}(S_{\rho})$ and $\rho_2 S$ consists of the other terms.  Then, by definitions of $d'_1$ and $\rho$, we have $d'_1 (\rho_2 S)$ $=$ $0$ ($\diamond\diamond$) and $d_1' (d'' \rho_2 S))$ $=$ $0$ ($\diamond\diamond\diamond$).  
\begin{equation}
\begin{split}
d'' d' S &= d'' \rho d_1' \rho S \quad {(\text{definition of $d'$})}\\
&= d'' \rho d'_1 (\rho_1 S + \rho_2 S) \quad {\text{($\diamond$)}}\\
&= d'' \rho d'_1 \rho_1 S \quad {\text{($\diamond\diamond$)}}\\
&= \rho d'' d'_1 \rho_1 S \quad{\text{(\ref{chain_rho})}}\\
&= - \rho d'_1 d'' \rho_1 S \quad {\text{(\ref{eq-final})}}\\
&= - \rho d'_1 d'' (\rho_1 S + \rho_2 S)  \quad {\text{($\diamond\diamond\diamond$)}}\\
&= - \rho d'_1 d'' (\rho S) \quad {\text{($\diamond$)}}\\
&= - \rho d'_1 \rho (d'' S) \quad {\text{(\ref{chain_rho})}}\\
&= - d' d'' S \quad {\text{(only the terms of Type $2_j$ are survived by $\rho$)}}
\end{split}
\end{equation}
where
\begin{equation}\label{eq-final}
\begin{split}
&d'' d' (\rho_1 S) = - d'_1 d'' (\rho_1 S) \quad {\text{($\rho$: Type $1_j$ and Case 2)}}\\
\Longleftrightarrow &d'' \epsilon \rho d'_1 f (\rho_1 S) = - \epsilon \rho d'_1 f (d'' \rho_1 S) \quad {\text{(definition of $d'$)}}\\
\Longleftrightarrow &d'' \rho d'_1 (\rho_1 S) = - \rho d'_1 d'' (\rho_1 S) \quad {\text{(definition of $f$)}}\\
\Longleftrightarrow &\rho d'' d'_1 (\rho_1 S) = - \rho d'_1 d'' (\rho_1 S).  
\end{split}
\end{equation}
The following lemma (Lemma \ref{preserve_lemma2}) completes the proof of Theorem \ref{bicomplex_theorem}.  
\end{proof}

\begin{lemma}\label{preserve_lemma2}
The formula $d'_1 d'' f S$ $=$ $d'_1 f d'' S$ for $S$ : {\rm{(b-1)--(b-4)}} : Type $2_j$.  
\end{lemma}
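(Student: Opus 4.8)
The plan is to show that both compositions equal the Khovanov differential of the contracted image $d'_1 f S$, computed on the smaller cable. Write $d'' = d''_{\mathrm{nc}} + d''_{\mathrm{c}}$, splitting the Khovanov differential according to whether the crossing whose marker is changed lies between two non-contracted strands ($d''_{\mathrm{nc}}$) or meets a contracted strand ($d''_{\mathrm{c}}$). Recall that $f$ rewrites only the local (b)-panel of Figs. \ref{type-figu1}--\ref{type-figu2}, turning it into the corresponding (a)-panel of Type $1_j$, and fixes every non-contracted crossing and every non-contracted circle, while $d'_1$ keeps only Type $1_j$ states and then erases the contracted circles, identifying such a state with a state of the smaller cable; I write $\bar d$ for the Khovanov differential there.

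First I would dispose of the non-contracted part. A term of $d''_{\mathrm{nc}}$ applies one of the elementary maps $m,\Delta$ of Fig. \ref{differential2} on a region disjoint from the (b)-panel rewritten by $f$, so the two local operations commute, and the incidence sign is unchanged because the order of negative markers is fixed once and for all by Def. \ref{alternative_marker}; hence $d''_{\mathrm{nc}} f S = f\, d''_{\mathrm{nc}} S$ term by term. Since the non-contracted crossings of the big cable are exactly the crossings of the smaller cable, deleting the contracted circles turns each such saddle into the corresponding saddle of $\bar d$, giving $d'_1 d''_{\mathrm{nc}} f S = \bar d\,(d'_1 f S) = d'_1 f\, d''_{\mathrm{nc}} S$.

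Next I would show the contracted parts contribute nothing. On the left this is immediate: $f S$ is Type $1_j$, and Type $1_j$ is pinned down by the prescribed smoothings of the contracted crossings (Fig. \ref{cable-fig3}), so every term of $d''_{\mathrm{c}} f S$ changes one of these markers and leaves the Type $1_j$ locus, whence $d'_1 d''_{\mathrm{c}} f S = 0$ and $d'_1 d'' f S = \bar d\,(d'_1 f S)$. On the right one must instead check $d'_1 f\, d''_{\mathrm{c}} S = 0$. I would do this case by case through Tables \ref{pre_type1_table1}--\ref{pre_type1_table12} (equivalently Lemmas \ref{lemma_type1_lem1}--\ref{lemma_type1_lem3}), verifying that each contracted saddle of the (b)-panel of $S$ lands neither in Type $1_j$ nor in a (b-1)--(b-4) panel, so that $f$ fixes it and $d'_1$ annihilates it, giving $d'_1 f d'' S = \bar d\,(d'_1 f S)$ as well and finishing the proof.

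The hard part will be this last vanishing $d'_1 f\, d''_{\mathrm{c}} S = 0$, for which the Type $1_j$ characterization available on the left cannot be invoked on the right. The one genuinely dangerous term is the saddle at the distinguished contracted crossing where the (a)- and (b)-panels differ, since it could a priori recreate the (a)-panel $fS$ itself and survive $d'_1$; but $f$ preserves the homological grading whereas $d''$ raises it by one, so $fS$ cannot occur as a term of $d''S$, and the remaining contracted saddles are exactly what the tables of Sec. \ref{sec_def_type1} were assembled to control.
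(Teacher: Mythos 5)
Your proposal is correct and takes essentially the same route as the paper: your splitting $d'' = d''_{\mathrm{nc}} + d''_{\mathrm{c}}$ is exactly the paper's decomposition of $d''S$ (resp.\ $d''fS$) into Type-$2_j$ (resp.\ Type-$1_j$) terms arising from crossings between non-contracted strands and neither-type terms arising from crossings that meet a contracted strand, your term-by-term commutation $d''_{\mathrm{nc}} fS = f\, d''_{\mathrm{nc}} S$ is the paper's matching $(fS)_{1,n} = f(S_{2,n})$ (confirmed there via Fig.~\ref{type1_type2} and Tables \ref{pre_type1_table1}--\ref{pre_type1_table16}), and the vanishing of the contracted contributions under $d'_1$ (resp.\ $d'_1 f$) is handled the same way on both sides. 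The explicit grading argument you give for the ``dangerous'' contracted saddle is a useful supplement that the paper leaves implicit in its appeal to the tables.
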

\begin{proof}
Let $S$ be an enhanced state of (b-1)--(b-4) of Type $2_j$.  Then, $d'' S$ $=$ $\sum_{m} \pm S_{0, m}$ $+$ $\sum_{n} \pm S_{2, n}$ where for each $n$, $S_{2, n}$ is an enhanced state of Type $2_j$, and for each $m$, $S_{0, m}$ is an enhanced state that is neither Type $1_j$ nor Type $2_j$.  Then, $d'_1 f d'' S$ $=$ $\sum_{n} \pm (d'_1 f S_{2, n})$.  On the other hand, $d'' f S$ $=$ $\sum_{0, m'} S_{0, m'}$ $+$ $\sum_{n'} \pm (fS)_{1, n'}$ where for each $n'$ above, $(fS)_{1, n'}$ is an enhanced state of Type $1_j$, and for each $m'$, $S_{0, m'}$ is an enhanced state that is neither Type $1_j$ nor Type $2_j$.  Then, $d'_1 d'' f S$ $=$ $\sum_{n'} \pm d'_1 (fS)_{1, n'}$.  The terms $S_{0, m}$ and $S_{0, m'}$ were changed at one marker of a crossing between one contracted strand and another strand.  $S_{2, n}$ or $(fS)_{1, n'}$ was then produced from Type $2_j$ by changing one marker of a crossing between both two non-contracted strands.  The situation is illustrated in Fig. \ref{type1_type2}, and the correspondences are confirmed from Tables \ref{pre_type1_table1}--\ref{pre_type1_table16}.  Then, by changing the order of $\{n'\}$, if necessary, we can assume $n'$ $=$ $n$, and under this condition, we have $(fS)_{1, n}$ $=$ $f(S_{2, n})$; hence, $d'_1 d'' f S$ $=$ $d'_1 f d'' S$.  
\end{proof}
\begin{figure}
\begin{picture}(0,0)
\put(102,-100){$fS$}
\put(222,-100){$S$}
\end{picture}
\begin{center}
\qquad\quad \includegraphics[width=8cm,bb=0 0 300 102]{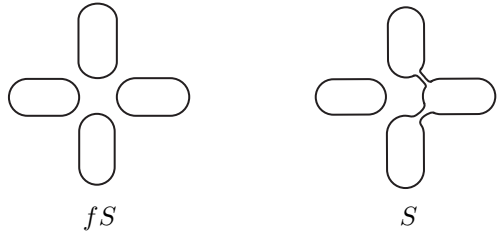}
\end{center}
\caption{$fS$ : Type $1_j$ and $S$ : Type $2_j$ in the case of (a-1) and (b-1) and the case of Fig. \ref{table_type} (a), where the two exteriors of these neighborhoods are the same.  The pictures in this figure are preserved even after changing the smoothing between non-contracted strands.  }\label{type1_type2}
\end{figure}

\begin{remark}
If we try to show the invariance of $H^{k}(H^{i}(C_{\bf{n}}^{*, *, j}(D), d''), d')$ under Reidemeister moves, we desire to have the map $d'* : $$H^{i}(C_{\bf{n}}^{k, *, j}(D)) \to H^{i}(C_{\bf{n}}^{k+1, *, j}(D))$ which is fixed under these moves.  We often describe each of Reidemeister moves as a composition of an inclusion map and a retraction (for the first move, see \cite[Page 336]{viro}, and for the second and third moves, see \cite[Formulae (2.2), (2.6)]{ito3}), denoted here by $\operatorname{in} \circ \rho_i$ $i$ $=$ $1$, $2$, and $3$, respectively.  Then, at least, we need the commutativity of $d'$ and $\operatorname{in} \circ \rho_i$ up to chain homotopy.  The map $d'$ essentially consists of $\rho$ and $f$.  Since $\rho$ is $\rho_2$, one can show the commutativity $\operatorname{in} \circ \rho_i$ and $\rho$ for each $i$ up to chain homotopy.  For the map $f$, though this map $f$ sends Type $1_j$ to Type $2_j$, which seems to induce some relation between $S_{+-}(p, q)$ and $S_{-+}(p, q)$ of (\ref{def_rho2}) in some cases, more research of the property of $d'$ is required to contribute to demonstrating the Reidemeister invariance.  
\end{remark}

\subsection*{Acknowledgements}
The author was a Research Fellow of the Japan Society for the Promotion of Science (20$\cdot$935).  This work was partially supported by IRTG 1529, a Waseda University Grant for Special Projects (2010A-863), and JSPS KAKENHI Grant Number 20$\cdot$935, 23740062.  The author would like to thank the referee for useful comments on the early version, particularly those relating to Proposition \ref{bicomplex_commute}.  The author also thanks Gregory Mezera and Professor Okihiro Sawada for their fruitful comments on the presentation of this paper.

\clearpage
\appendix
\section{Tables of Lemmas \ref{lemma_type1_lem1}--\ref{lemma_type1_lem3}}
\begin{figure}[h!]
\begin{center}
\qquad \includegraphics[width=10cm,bb=0 0 308.5 89]{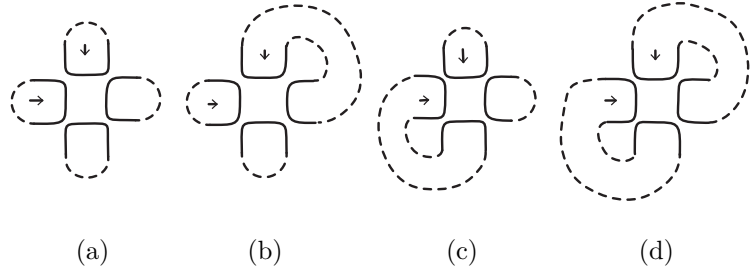}
\end{center}
\begin{picture}(0,0)
\put(65,0){(a)}
\put(130,0){(b)}
\put(205,0){(c)}
\put(278,0){(d)}
\end{picture}
\caption{Types (a-$i$) of Table \ref{pre_type1_table1}--\ref{pre_type1_table16}.  (a) : Tables \ref{pre_type1_table1}, \ref{pre_type1_table5}, \ref{pre_type1_table9}, and, \ref{pre_type1_table13}.  (b) : \ref{pre_type1_table2}, \ref{pre_type1_table6}, \ref{pre_type1_table10}, and \ref{pre_type1_table14}.  (c) : \ref{pre_type1_table3}, \ref{pre_type1_table7}, \ref{pre_type1_table11}, and \ref{pre_type1_table15}.  (d) : \ref{pre_type1_table4}, \ref{pre_type1_table8}, \ref{pre_type1_table12}, and \ref{pre_type1_table16}.  The arrows indicate the direction of contracted strands.}\label{table_type}
\end{figure}
\begin{table}[h!]
\begin{center}
\begin{tabular}{|c|c|c|} \hline
$S$ &$\oplus$ (a-1) : $p \otimes 1 \otimes x \otimes q$  &$\ominus$ (b-1) : $p \otimes q (= r)$ \\  
&$\oplus$ (a-2) : $p \otimes x \otimes 1 \otimes q$&\\ \hline
$a \otimes b \otimes c (= d)$ & (a-1) or (a-2) in $a \otimes b \otimes \Delta(c)$ & $a \otimes m(b \otimes c)$ \\ \hline
$x \otimes 1 \otimes x$ & (a-1) : $x \otimes 1 \otimes x \otimes x$ & (b-1) : $x \otimes x$ \\ \hline
$x \otimes 1 \otimes 1$ & (a-1) : $x \otimes 1 \otimes x \otimes 1$& (b-1) : $x \otimes 1$ \\ \hline
$1 \otimes 1 \otimes x$ & (a-1) : $1 \otimes 1 \otimes x \otimes x$ & (b-1) : $1 \otimes x$ \\ \hline
$1 \otimes 1 \otimes 1$ & (a-1) : $1 \otimes 1 \otimes x \otimes 1$ & (b-1) : $1 \otimes 1$ \\ \hline
$x \otimes x \otimes 1$ &(a-2) : $x \otimes x \otimes 1 \otimes x$ &(b-1) : $x \otimes x$ \\ \hline
$1 \otimes x \otimes 1$ &(a-2) : $1 \otimes x \otimes 1 \otimes x$ &(b-1) : $1 \otimes x$ \\ \hline
$x \otimes x \otimes x$ & none &$0$ \\ \hline
$1 \otimes x \otimes x$ & none & $0$ \\ \hline
\end{tabular}
\end{center}
\caption{(a-1) and (a-2) from Fig. \ref{table_type} (a).  The table shows that $d(S)$ $=$ (a-$i$) $+$ (b-1) $+$ other terms for $i$ $=$ 1 or 2 and for $S$ as in the left figure of Fig. \ref{pre_type1}.}\label{pre_type1_table1}
\end{table}
\begin{table}
\begin{center}
\begin{tabular}{|c|c|c|} \hline
$S$ &$\oplus$ (a-1) : $(p =) 1 \otimes x \otimes q$ &$\ominus$ (b-1) : $(p = r =) q$ \\ 
&$\oplus$ (a-2) : $(p =) x \otimes 1 \otimes q$ & \\ \hline
$(a =) b \otimes c (= d)$ & (a-1) or (a-2) in $a \otimes \Delta(c)$ & $m(a \otimes c)$ \\ \hline
$x \otimes 1$ & (a-2) : $x \otimes 1 \otimes x$ & (b-1) : $x$ \\ \hline
$1 \otimes x$ & (a-1) : $1 \otimes x \otimes x$ & (b-1) : $x$\\ \hline
$1 \otimes 1$ & (a-1) : $1 \otimes x \otimes 1$ & (b-1) : $1$ \\ \hline
$x \otimes x$ & none & $0$ \\ \hline
\end{tabular}
\end{center}
\caption{(a-1) and (a-2) from Fig. \ref{table_type} (b).  The table shows that $d(S)$ $=$ (a-$i$) $+$ (b-1) $+$ other terms for $i$ $=$ 1 or 2 and for $S$ as in the left figure of Fig. \ref{pre_type1}.}\label{pre_type1_table2}
\end{table}
\begin{table}
\begin{center}
\begin{tabular}{|c|c|c|} \hline
$S$&$\oplus$ (a-1) : $p \otimes 1 \otimes x (= q)$&$\ominus$ (b-1) : $p \otimes r \otimes q$\\ 
&$\oplus$ (a-2) : $p \otimes  x \otimes 1 (= q)$ & \\ \hline
$a \otimes b \otimes c \otimes d$&(a-1) or (a-2) in $a \otimes b \otimes m(c \otimes d)$& $a \otimes m(b \otimes c) \otimes d$ \\ \hline
$x \otimes 1 \otimes x \otimes 1$&(a-1) : $x \otimes 1 \otimes x$&(b-1) : $x \otimes x \otimes 1$ \\ \hline
$x \otimes 1 \otimes 1 \otimes x$&(a-1) : $x \otimes 1 \otimes x$&(b-1) : $x \otimes 1 \otimes x$ \\ \hline
$1 \otimes 1 \otimes x \otimes 1$&(a-1) : $1 \otimes 1 \otimes x$&(b-1) : $1 \otimes x \otimes 1$ \\ \hline
$1 \otimes 1 \otimes 1 \otimes x$&(a-1) : $1 \otimes 1 \otimes x$&(b-1) : $1 \otimes 1 \otimes x$ \\ \hline
$x \otimes x \otimes 1 \otimes 1$&(a-2) : $x \otimes x \otimes 1$&(b-1) : $x \otimes x \otimes 1$ \\ \hline
$1 \otimes x \otimes 1 \otimes 1$&(a-2) : $1 \otimes x \otimes 1$&(b-1) : $1 \otimes x \otimes 1$ \\ \hline
$x \otimes x \otimes x \otimes x$&none&$0$ \\ \hline
$x \otimes x \otimes x \otimes 1$&none&$0$ \\ \hline
$x \otimes x \otimes 1 \otimes x$&none&$x \otimes x \otimes x$ \\ \hline
$x \otimes 1 \otimes x \otimes x$&none&$x \otimes x \otimes x$ \\ \hline
$x \otimes 1 \otimes 1 \otimes 1$&none&$x \otimes 1 \otimes 1$ \\ \hline
$1 \otimes x \otimes x \otimes x$&none&$0$ \\ \hline
$1 \otimes x \otimes x \otimes 1$&none&$0$ \\ \hline
$1 \otimes x \otimes 1 \otimes x$&none&$1 \otimes x \otimes x$ \\ \hline
$1 \otimes 1 \otimes x \otimes x$&none&$1 \otimes x \otimes x$ \\ \hline
$1 \otimes 1 \otimes 1 \otimes 1$&none&$1 \otimes 1 \otimes 1$ \\ \hline
\end{tabular}
\end{center}
\caption{(a-1) and (a-2) from Fig. \ref{table_type} (c).  The table shows that $d(S)$ $=$ (a-$i$) $+$ (b-1) $+$ other terms for $i$ $=$ 1 or 2 and for $S$ as in the left figure of Fig. \ref{pre_type1}.}\label{pre_type1_table3}
\end{table}
\begin{table}
\begin{center}
\begin{tabular}{|c|c|c|}\hline
$S$ &$\oplus$ (a-1) : $(p =) 1 \otimes x (= q)$ &$\ominus$ (b-1) : $(r =) p \otimes q$\\ 
&$\oplus$ (a-2) : $(p =) x \otimes 1 (= q)$ &  \\ \hline
$(a =) b \otimes d \otimes c$ & (a-1) or (a-2) in $a \otimes m(d \otimes c)$ & $m(a \otimes c) \otimes d$ \\ \hline
$x \otimes 1 \otimes 1$ &(a-2) : $x \otimes 1$&(b-1) : $x \otimes 1$ \\ \hline
$1 \otimes x \otimes 1$&(a-1) : $1 \otimes x$&(b-1) : $1 \otimes x$ \\ \hline
$1 \otimes 1 \otimes x$&(a-1) : $1 \otimes x$&(b-1) : $x \otimes 1$ \\ \hline
$x \otimes x \otimes x$& none & $0$ \\ \hline
$x \otimes x \otimes 1$&none&$x \otimes x$ \\ \hline
$x \otimes 1 \otimes x$ &none&$0$ \\ \hline
$1 \otimes x \otimes x$&none&$x \otimes x$ \\ \hline
$1 \otimes 1\otimes 1$&none &$1 \otimes 1$ \\ \hline
\end{tabular}
\end{center}
\caption{(a-1) and (a-2) from Fig. \ref{table_type} (d).  The table shows that $d(S)$ $=$ (a-$i$) $+$ (b-1) $+$ other terms for $i$ $=$ 1 or 2 and for $S$ as in the left figure of Fig. \ref{pre_type1}.}\label{pre_type1_table4}
\end{table}
\begin{table}
\begin{center}
\begin{tabular}{|c|c|c|} \hline
$S$ &$\oplus$ (a-1) : $q \otimes p \otimes 1 \otimes x$ &$\ominus$ (b-2) : $p \otimes q$  \\ 
&$\oplus$ (a-2) : $q \otimes p \otimes x \otimes 1$ & \\\hline
$a \otimes b (=d) \otimes c$& (a-1) or (a-2) in $a \otimes \Delta(b) \otimes c$ & $a \otimes m(b \otimes c)$  \\ \hline
$x \otimes 1 \otimes x$ & (a-1) : $x \otimes x \otimes 1 \otimes x$ & (b-2) : $x \otimes x$ \\ \hline
$1 \otimes 1 \otimes x$ & (a-1) : $1 \otimes x \otimes 1 \otimes x$ & (b-2) : $1 \otimes x$  \\ \hline
$x \otimes x \otimes 1$ & (a-2) : $x \otimes x \otimes x \otimes 1$ & (b-2) : $x \otimes x$ \\ \hline
$x \otimes 1 \otimes 1$ & (a-2) : $x \otimes 1 \otimes x \otimes 1$ & (b-2) : $x \otimes 1$ \\ \hline
$1 \otimes x \otimes 1$ & (a-2) : $1 \otimes x \otimes x \otimes 1$ & (b-2) : $1 \otimes x$ \\ \hline
$1 \otimes 1 \otimes 1$ & (a-2) : $1 \otimes 1 \otimes x \otimes 1$ & (b-2) : $1 \otimes 1$\\ \hline
$x \otimes x \otimes x$ & none & $0$ \\ \hline
$1 \otimes x \otimes x$ & none & $0$ \\ \hline
\end{tabular}
\end{center}
\caption{(a-1) and (a-2) from Fig. \ref{table_type} (a).  The table shows that $d(S)$ $=$ (a-$i$) $+$ (b-2) $+$ other terms for $i$ $=$ 1 or 2 and for $S$ as in the right figure of Fig. \ref{pre_type1}.}\label{pre_type1_table5}
\end{table}
\begin{table}
\begin{center}
\begin{tabular}{|c|c|c|} \hline
$S$ &$\oplus$ (a-1) : $q \otimes (p =) 1 \otimes x$ &$\ominus$ (b-2) :  $q \otimes b \otimes p$\\
&$\oplus$ (a-2) : $q \otimes (p =) x \otimes 1$& \\ \hline
$a \otimes b \otimes d \otimes c$ & (a-1) or (a-2) in $a \otimes m(b \otimes d) \otimes c$ & $a \otimes b \otimes m(d \otimes c)$ \\ \hline
$x \otimes 1 \otimes 1 \otimes x$ &(a-1) : $x \otimes 1 \otimes x$ &(b-2) : $x \otimes 1 \otimes x$ \\ \hline
$1 \otimes 1 \otimes 1 \otimes x$ &(a-1) : $1 \otimes 1 \otimes x$&(b-2) : $1 \otimes 1 \otimes x$ \\ \hline
$x \otimes 1 \otimes x \otimes 1$ &(a-2) : $x \otimes x \otimes 1$&(b-2) : $x \otimes 1 \otimes x$ \\ \hline
$1 \otimes x \otimes 1 \otimes 1$ &(a-2) : $1 \otimes x \otimes 1$&(b-2) : $1 \otimes x \otimes 1$ \\ \hline
$1 \otimes 1 \otimes x \otimes 1$ &(a-2) : $1 \otimes x \otimes 1$&(b-2) : $1 \otimes 1 \otimes x$ \\ \hline
$x \otimes x \otimes 1 \otimes 1$&(a-2): $x \otimes x \otimes 1$&$x \otimes x \otimes 1$ \\ \hline
$x \otimes x \otimes x \otimes x$&none&$0$ \\ \hline
$x\otimes x \otimes 1 \otimes x$&none&$x \otimes x \otimes x$ \\ \hline
$x \otimes 1 \otimes x \otimes x$&none&$0$ \\ \hline
$x \otimes x \otimes x \otimes 1$&none&$x \otimes x \otimes x$ \\ \hline
$x \otimes 1 \otimes 1 \otimes 1$&none&$x \otimes 1 \otimes 1$ \\ \hline
$1 \otimes x \otimes x \otimes x$&none&$0$ \\ \hline
$1 \otimes x \otimes 1 \otimes x$&none&$1 \otimes x \otimes x$ \\ \hline
$1 \otimes 1 \otimes x \otimes x$&none&$0$ \\ \hline
$1 \otimes x \otimes x \otimes 1$&none&$1 \otimes x \otimes x$ \\ \hline
$1 \otimes 1 \otimes 1 \otimes 1$&none&$1 \otimes 1 \otimes 1$ \\ \hline
\end{tabular}
\end{center}
\caption{(a-1) and (a-2) from Fig. \ref{table_type} (b).  The table shows that $d(S)$ $=$ (a-$i$) $+$ other terms for $i$ $=$ 1 or 2 and for $S$ as in the right figure of Fig. \ref{pre_type1}.}\label{pre_type1_table6}
\end{table}
\begin{table}
\begin{center}
\begin{tabular}{|c|c|c|} \hline
$S$ &$\oplus$ (a-1) : $x (= q) \otimes 1 \otimes p$ &$\ominus$ (b-2) : $p (=q)$ \\ 
&$\oplus$ (a-2) : $1 (= q) \otimes x \otimes p$& \\
\hline
$a (= c) \otimes b (= d)$ & (a-1) or (a-2) in $a \otimes \Delta(b)$ & $m(a \otimes b)$ \\ \hline
$x \otimes 1$ &(a-1) : $x \otimes 1 \otimes x$ &(b-2) : $x$ \\ \hline
$1 \otimes x$ &(a-2) : $1 \otimes x \otimes x$&(b-2) : $x$ \\ \hline
$1 \otimes 1$ &(a-2) : $1 \otimes x \otimes 1$&(b-2) : $1$ \\ \hline
$x \otimes x$ &none&$0$ \\ \hline
\end{tabular}
\end{center}
\caption{(a-1) and (a-2) from Fig. \ref{table_type} (c).  The table shows that $d(S)$ $=$ (a-$i$) $+$ (b-2) $+$ other terms for $i$ $=$ 1 or 2 and for $S$ as in the right figure of Fig. \ref{pre_type1}.}\label{pre_type1_table7}
\end{table}
\begin{table}
\begin{center}
\begin{tabular}{|c|c|c|} \hline
$S$ &$\oplus$ (a-1) : $q (= 1) \otimes x (= p)$&$\ominus$ (b-2) : $p (= q) \otimes b$ \\ 
&$\oplus$ (a-2) : $q (= x) \otimes 1 (= p)$& \\ \hline
$a (= c) \otimes b \otimes d$ &(a-1) or (a-2) in $a \otimes m(b \otimes d)$ &  $m(a \otimes d) \otimes b$ \\ \hline
$x \otimes 1 \otimes 1$ &(a-2) : $x \otimes 1$ &(b-2) : $x \otimes 1$ \\ \hline
$1 \otimes x \otimes 1$ &(a-1) : $1 \otimes x$ &(b-2) : $1 \otimes x$ \\ \hline
$1 \otimes 1 \otimes x$ &(a-1) : $1 \otimes x$ &(b-2) : $x \otimes 1$ \\ \hline
$x \otimes x \otimes x$&none&$0$ \\ \hline
$x \otimes x \otimes 1$&none&$x \otimes x$ \\ \hline
$x \otimes 1 \otimes x$&none&$0$ \\ \hline
$1 \otimes x \otimes x$&none&$x \otimes x$ \\ \hline
$1 \otimes 1 \otimes 1$&none&$1 \otimes 1$ \\ \hline
\end{tabular}
\end{center}
\caption{(a-1) and (a-2) from Fig. \ref{table_type} (d).  The table shows that $d(S)$ $=$ (a-$i$) $+$ (b-2) $+$ other terms for $i$ $=$ 1 or 2 and for $S$ as in the right figure of Fig. \ref{pre_type1}.  }\label{pre_type1_table8}
\end{table}
\begin{table}
\begin{center}
\begin{tabular}{|c|c|c|c|} \hline
$S$ &$\oplus$ (a-3) : $x \otimes p \otimes q \otimes 1$ &$\ast$ (b-3)&$\ast$ (b-4)  \\
&$\ominus$ (a-4) : $1 \otimes p \otimes q \otimes x$&$p \otimes q$&$(r =) p \otimes q$\\ \hline
$a \otimes b\otimes d$& (a-3) or (a-4)  & $m(a \otimes b) \otimes d$ & $a \otimes m(b \otimes d)$ \\
$(b = c)$&in $a \otimes \Delta(b) \otimes d$&&\\ \hline
$x \otimes x \otimes 1$ &$\oplus$ (a-3) : $x \otimes x \otimes x \otimes 1$ & $0$ &$\ominus$ (b-4) : $x \otimes x$ \\ \hline
$x \otimes 1 \otimes 1$ &$\oplus$ (a-3) : $x \otimes 1 \otimes x \otimes 1$ &$\ominus$ (b-3) : $x \otimes 1$ &\\ 
&$\oplus$ (a-3) : $x \otimes x \otimes 1 \otimes 1$&&$\ominus$ (b-4) : $x \otimes 1$ \\ \hline
$1 \otimes x \otimes x$ &$\ominus$ (a-4) : $1 \otimes x \otimes x \otimes x$ &$\oplus$ (b-3) : $x \otimes x$ &$0$ \\ \hline
$1 \otimes 1 \otimes x$ &$\ominus$ (a-4) : $1 \otimes 1 \otimes x \otimes x$ &$\oplus$ (b-3) : $1 \otimes x$ & \\
&$\ominus$ (a-4) : $1 \otimes x \otimes 1 \otimes x$&&$\oplus$ (b-4) : $1 \otimes x$ \\ \hline
$x \otimes x \otimes x$&none&$0$&$0$ \\ \hline
$x\otimes 1 \otimes x$&none&$\oplus$ (b-3) : $x \otimes x$&$\ominus$ (b-4) : $x \otimes x$ \\ \hline
$1 \otimes x \otimes 1$&none&$\ominus$ (b-3) : $x \otimes 1$&$\oplus$ (b-4) : $1 \otimes x$ \\ \hline
$1 \otimes 1 \otimes 1$&none&$1 \otimes 1$&$1 \otimes 1$ \\ \hline
\end{tabular}
\end{center}
\caption{(a-3) and (a-4) from Fig. \ref{table_type} (a).  The table shows that $d(S)$ contains the same number of $\oplus$-terms as $\ominus$-terms for $S$ as in Fig. \ref{pre_type1b}.  }\label{pre_type1_table9}
\end{table}
\begin{table}
\begin{center}
\begin{tabular}{|c|c|c|c|c|} \hline
$S$&$\oplus$ (a-3) : $x \otimes 1 \otimes q$&$\ast$ (b-4) : $q = p$  \\ 
&$\ominus$ (a-4) : $1 \otimes x \otimes q$& \\ \hline
$a \otimes d (= b = c)$ & (a-1) or (a-2) in $a \otimes \Delta(d)$ & $a \otimes \Delta(d)$ \\ \hline
$x \otimes 1$ &$\oplus$ (a-3) : $x \otimes 1 \otimes x$ &$\ominus$ (b-4) : $x \otimes 1 \otimes x$ \\ \hline
$1 \otimes x$ &$\ominus$ (a-4) : $1 \otimes x \otimes x$ &$\oplus$ (b-4) : $1 \otimes x \otimes x$\\ \hline
$1 \otimes 1$ &$\ominus$ (a-4) : $1 \otimes x \otimes 1$ &$\oplus$ (b-4) : $1 \otimes x \otimes 1$ \\ \hline
$x \otimes x$&none&$x \otimes x \otimes x$ \\ \hline
\end{tabular}
\end{center}
\caption{(a-3) and (a-4) from Fig. \ref{table_type} (b).  The table shows that $d(S)$ contains the same number of $\oplus$-terms as $\ominus$-terms for $S$ as in Fig. \ref{pre_type1b}.}\label{pre_type1_table10}
\end{table}
\begin{table}
\begin{center}
\begin{tabular}{|c|c|c|} \hline
$S$ &$\oplus$ (a-3) : $q (= x) \otimes 1 \otimes p$&$\ast$ (b-3) : $p \otimes r \otimes q$\\
&$\ominus$ (a-4) : $q (= 1) \otimes x \otimes p$ & \\ \hline
$a (= b = c) \otimes d$&(a-3) or (a-4) in $a \otimes \Delta(d)$ & $\Delta(a) \otimes d$ \\ \hline
$x \otimes 1$&$\oplus$ (a-3) : $x \otimes 1 \otimes x$&$\ominus$ (b-3) : $x \otimes 1 \otimes x$ \\ \hline
$1 \otimes x$&$\ominus$ (a-4) : $1 \otimes x \otimes x$&$\oplus$ (b-3) : $1 \otimes x \otimes x$ \\ \hline
$1 \otimes 1$&$\ominus$ (a-4) : $1 \otimes 1 \otimes x$&$\oplus$ (b-3) : $1 \otimes 1 \otimes x$ \\ \hline
$x \otimes x$&none&$x \otimes x \otimes x$ \\ \hline
\end{tabular}
\end{center}
\caption{(a-3) and (a-4) from Fig. \ref{table_type} (c).  The table shows that $d(S)$ contains the same number of $\oplus$-terms as $\ominus$-terms for $S$ as in Fig. \ref{pre_type1b}.}\label{pre_type1_table11}
\end{table}
\begin{table}
\begin{center}
\begin{tabular}{|c|c|c|} \hline
$S$ &$\oplus$ (a-3) : $(p =) 1 \otimes x (= q)$&$\ast$ (b-4) : $(q=) p \otimes r$\\
&$\ominus$ (a-4) : $(p =) x \otimes 1 (= q)$& \\ \hline
$a (= b = c = d)$&(a-3) or (a-4) in $\Delta(a)$&(b-4) : $\Delta(a)$ \\ \hline
$1$&$\oplus$ (a-3) : $1 \otimes x$&$\ominus$ (b-4) : $1 \otimes x$\\ 
&$\ominus$ (a-4) : $x \otimes 1$&$\oplus$ (b-4) : $x \otimes 1$ \\ \hline
$x$&none&$x \otimes x$ \\ \hline
\end{tabular}
\end{center}
\caption{(a-3) and (a-4) from Fig. \ref{table_type} (d).  The table shows that $d(S)$ contains the same number of $\oplus$-terms as $\ominus$-terms for $S$ as in Fig. \ref{pre_type1b}.}\label{pre_type1_table12}
\end{table}
\clearpage
\begin{table}
\begin{center}
\begin{tabular}{|c|c|c|} \hline
$S$ &$\oplus$ (a-3) : $p \otimes 1 \otimes x \otimes q$ &$\ominus$ (a-4) : $p \otimes x \otimes 1 \otimes q$ \\ \hline
$a \otimes b \otimes c$ &(a-3) in $a \otimes \Delta(b) \otimes c$ &(a-4) in $a \otimes \Delta(b) \otimes c$\\ \hline
$x \otimes 1 \otimes x$ &$x \otimes 1 \otimes x \otimes x$&$x \otimes x \otimes 1 \otimes x$ \\ \hline
$x \otimes 1 \otimes 1$ &$x \otimes 1 \otimes x \otimes 1$&$x \otimes x \otimes 1 \otimes 1$ \\ \hline
$1 \otimes 1 \otimes x$ &$1 \otimes 1 \otimes x \otimes x$&$1 \otimes x \otimes 1 \otimes x$ \\ \hline
$1 \otimes 1 \otimes 1$ &$1 \otimes 1 \otimes x \otimes 1$&$1 \otimes x \otimes 1 \otimes 1$  \\ \hline
$x \otimes x \otimes x$ &none& none  \\ \hline
$x \otimes x \otimes 1$ &none& none  \\ \hline
$1 \otimes x \otimes x$ &none& none  \\ \hline
$1 \otimes x \otimes 1$ &none& none  \\ \hline
\end{tabular}
\end{center}
\caption{(a-3) and (a-4) are as in Fig. \ref{table_type} (a).  The table shows that $d(S)$ contains the same number of (a-3) as (a-4) for $S$ as in Fig. \ref{pre_type1a}.}\label{pre_type1_table13}
\end{table}
\begin{table}
\begin{center}
\begin{tabular}{|c|c|c|}\hline
$S$ &$\oplus$ (a-3) : $p (= 1) \otimes x \otimes q$ &$\ominus$ (a-4) : $p (= x) \otimes 1 \otimes q$ \\ \hline
$a (= b) \otimes c$ &(a-3) in $\Delta(a) \otimes c$ &(a-4) in $\Delta(a) \otimes c$ \\ \hline
$1 \otimes x$&$1 \otimes x \otimes x$&$x \otimes 1 \otimes x$ \\ \hline
$1 \otimes 1$&$1 \otimes x \otimes 1$&$x \otimes 1 \otimes 1$ \\ \hline
$x \otimes x$ &none&none \\ \hline
$x \otimes 1$ &none&none \\ \hline
\end{tabular}
\end{center}
\caption{(a-3) and (a-4) are as in Fig. \ref{table_type} (b).  The table shows that $d(S)$ contains the same number of (a-3) as (a-4) for $S$ as in Fig. \ref{pre_type1a}.}\label{pre_type1_table14}
\end{table}
\begin{table}
\begin{center}
\begin{tabular}{|c|c|c|c|c|} \hline
$S$ &$\oplus$ (a-3) : $p \otimes (q =) 1\otimes x$ &$\ominus$ (a-4) : $p \otimes (q =) x \otimes 1$   \\ \hline
$a \otimes b (= c)$ &(a-3) in $a \otimes \Delta(b)$ &(a-4) in $a \otimes \Delta(b)$ \\ \hline
$x \otimes 1$ &  $x \otimes 1 \otimes x$ & $x \otimes x \otimes 1$ \\ \hline
$x \otimes 1$  & $x \otimes 1 \otimes x$ & $x \otimes x \otimes 1$ \\  \hline
$1 \otimes x$ &none&none \\ \hline
$1 \otimes 1$ &none&none \\ \hline
\end{tabular}
\end{center}
\caption{(a-3) and (a-4) are as in Fig. \ref{table_type} (c).  The table shows that $d(S)$ contains the same number of (a-3) as (a-4) for $S$ as in Fig. \ref{pre_type1a}.}\label{pre_type1_table15}
\end{table}
\begin{table}
\begin{center}
\begin{tabular}{|c|c|c|} \hline
$S$ &$\oplus$ (a-3) : $(1=) p \otimes q (= x)$&$\ominus$ (a-4) : $(x=) p \otimes q (=1)$\\ \hline 
$a (= b = c)$ &(a-3) in $\Delta(a)$&(a-4) in $\Delta(a)$ \\ \hline
$1$ &$1 \otimes x$& $x \otimes 1$ \\ \hline
$x$ &none&none \\ \hline
\end{tabular}
\end{center}
\caption{(a-3) and (a-4) are as in Fig. \ref{table_type} (d).  The table shows that $d(S)$ contains the same number of (a-3) as (a-4) for $S$ as in Fig. \ref{pre_type1a}.}\label{pre_type1_table16}
\end{table}

\end{document}